\documentclass[11pt, letterpaper,reqno]{amsart}
\usepackage[left=1in,right=1in,bottom=0.8in,top=0.8in]{geometry} 
\usepackage{amsmath, amssymb,mathtools}
\usepackage{graphicx}
\usepackage{color}
\RequirePackage[colorlinks,citecolor=blue,urlcolor=blue]{hyperref}
\usepackage{amsfonts,amsthm, amscd}
\usepackage{bm}
\usepackage{hyperref}
\usepackage{mathrsfs}
\usepackage{yfonts}
\usepackage{verbatim}
\usepackage{graphicx}
\usepackage[utf8]{inputenc}
\usepackage{latexsym}
\usepackage{lscape}
\usepackage{epsfig}
\usepackage{subfigure}
\usepackage{dsfont}
\usepackage{mathdots} 
\usepackage{upgreek}
\usepackage{tikz} 
\usetikzlibrary{patterns} 
\usetikzlibrary{arrows}
\usetikzlibrary{decorations.markings} 
		\usetikzlibrary{shapes}  
\usepackage[font=small,labelfont=bf]{caption}
\usepackage{epstopdf}
\usepackage{xcolor} 
\usepackage{float} 
\usepackage{enumitem}
\usepackage{pgfplots}
\usepackage{listings}
\usepackage{longtable}  
\usepackage{tipa}
\usepackage{xfrac}  
\usepackage[normalem]{ulem}

\usepackage[only,llbracket,rrbracket]{stmaryrd}

\theoremstyle{plain}
\newtheorem{theorem}{Theorem}[section]
\newtheorem{lemma}[theorem]{Lemma}
\newtheorem{proposition}[theorem]{Proposition}
\newtheorem{corollary}[theorem]{Corollary}

\newtheorem{question}[theorem]{Question}
\theoremstyle{definition}

\newtheorem{remark}[theorem]{Remark}

\numberwithin{equation}{section}
\hypersetup{%
  colorlinks=true,
  urlbordercolor={1 1 1},
  linkcolor={teal},
  raiselinks=false
}
\colorlet{lgray}{white!80!black}
\pgfplotsset{compat=1.9} 
\DeclareFontFamily{U}{mathx}{}
\DeclareFontShape{U}{mathx}{m}{n}{<-> mathx10}{}
\DeclareSymbolFont{mathx}{U}{mathx}{m}{n}
\DeclareMathAccent{\widehat}{0}{mathx}{"70}
\DeclareMathAccent{\widecheck}{0}{mathx}{"71} 
\newcommand{\upperRomannumeral}[1]{\uppercase\expandafter{\romannumeral#1}} 
 
\def \one {\mathds{1}}
\def \be {\begin{equation}}
\def \ee {\end{equation}}
\def \lb {\left(}
\def \rb {\right)}

\def \RR {\mathbb{R}} 
\def \ZZ {\mathbb{Z}}  
\def \Z {\mathbb{Z}}  
\def \N {\mathbb{N}}  
\def \NN {\mathbb{N}}  
\def \ra {\rightarrow}

\def \ral {\longrightarrow}
\def \lal {\longleftarrow}
  
\def \bI {\mathbf{I}}
\def \ha {2}
\def \a {\alpha}
\def \b {\beta}
\def \g {\gamma} 
\def \m {\mu}

\def \ll {\langle}
\def \rr {\rangle}
\def \D {\mathbf{D}}
\def \E {\mathbf{E}} 
\def \A {\mathbf{A}}
\def \hZ {\widehat{Z}}
\def \ld {\sigma_{\ell}}
\def \rd {\sigma_{r}}
\def \dd {\mathsf{d}}
\def \ep {\varepsilon}
\def \lc {\mathbf{loc}}
\def \N {\mathbb{N}}
\def \lBr {\llbracket}
\def \rBr {\rrbracket}

\renewcommand{\P}{\mathbb{P}}
\newcommand{\TV}[1]{{\left\lVert #1 \right\rVert}_{\normalfont
\text{TV}}}

\DeclareMathOperator{\dehp}{DEHP}
\def \mt {\widehat{\mu}}

\usepgflibrary{fpu}
\makeatletter
\raggedbottom
\let\NAT@parse\undefined
\makeatother
\title{The open ASEP with light particles}    
\author{Dominik Schmid}
\address
{
Dominik Schmid, 
University of Bonn, Germany 
}
\email{d.schmid@uni-bonn.de} 
\author{Zongrui Yang}
\address
{
Zongrui Yang, 
Columbia University, USA 
}
\email{zy2417@columbia.edu} 
\begin{document}
\begin{abstract}
We consider the stationary measure of the open asymmetric simple exclusion process (ASEP) with light particles. We prove several results on the asymptotic locations of the light particles under the stationary measure. Moreover, in the fan region of the high and low density phases, we study the mixing times for open ASEP with light particles. Our approach involves the matrix product ansatz, Askey--Wilson signed measures, moderate deviation results for second class particles, as well as various coupling arguments. Along the way we obtain the particle densities at single sites for the standard open ASEP without light particles, which are of independent interest.
%  We investigate the stationary distribution of the open asymmetric simple exclusion process (ASEP) with light particles. We provide bounds on the asymptotic location of the light particles under the stationary distribution.  Moreover, in the fan region of the high and low density phase, we study mixing times for the open ASEP with light particles.   Our approach involves a combination of recent results on the density of  particles for a stationary open ASEP without light particles, moderate deviations for second class particles, as well as various coupling arguments.
\end{abstract}
\maketitle  
\section{Introduction and main results}
\subsection{Preface}\label{subsec:preface}
The open asymmetric simple exclusion process (ASEP) is a paradigmatic model for non-equilibrium systems with open boundaries and for Kardar--Parisi--Zhang (KPZ) universality. Extensive studies have been devoted to understanding its stationary measure from various perspectives; see \cite{liggett1999stochastic, corwin2022some, blythe2007nonequilibrium, corteel2011tableaux} for a selection of surveys in statistical physics, probability, and combinatorics. While most research has focused on the open ASEP with a single species of particles, progress has also been made in understanding the open ASEPs with two or more species of particles. These systems are known to manifest richer and more interesting physical properties, but they are relatively less understood. 
 
We will focus on the two-species open ASEP with the so-called ``semi-permeable" boundaries, which we refer to in this article as the ``open ASEP with light particles." The ``semi-permeable" boundary conditions allow only the ordinary (or first class) particles to enter or exit at the open boundaries, so in particular, the light (or second class) particles are conserved in the system. Phase transitions as well as other physical properties of the stationary measure of this model were discovered in \cite{arita2006exact,arita2006phase,uchiyama2008two,ayyer2009two,cantini2017asymmetric}. For combinatorial aspects of this model, we refer the reader to \cite{corteel2017combinatorics,corteel2018macdonald,cantini2017asymmetric}. We will investigate the following natural question:
\begin{question}\label{question}
    Consider the stationary measure of the open ASEP on the lattice $\lBr N \rBr:=\{1,\dots,N\}$ with $r$ light particles, which we denote by $\mt_{N,r}$. We fix $r$ and take $N\rightarrow\infty$, then under the stationary measure $\mt_{N,r}$, what are the asymptotic locations of those $r$ light particles in the system?
\end{question}

%To the best of our knowledge, this question has not been studied before in the literature. 
In this paper, we contribute some partial answers to this question. Our proofs combine several methods from different backgrounds. When $r=1$, we first use the matrix product ansatz \cite{derrida1993exact,uchiyama2008two} to derive a surprising and simple relation between the location of the light particle under the stationary measure $\mt_{N,1}$ and the densities (at single sites) of the stationary measure $\mu_N$ of the standard open ASEP. We then investigate these densities in the open ASEP, which are of independent interest. Everywhere in the phase diagram except for the ``coexistence line," the densities are computed using techniques involving the so-called Askey–Wilson polynomials and Askey–Wilson signed measures \cite{uchiyama2004asymmetric,bryc2010askey,bryc2017asymmetric,wang2023askey}. 
On the coexistence line $A=C>1$, the densities are obtained by combining coupling techniques, the ``local convergence'' result from \cite{bahadoran2006convergence} for the full-line ASEP, and the macroscopic density profile in \cite{wang2023askey}. 
For a general, but finite number $r$ of light particles, we are able to answer Question \ref{question} in the fan region $AC\leq1$ part of the high and low density phases. Here, we rely on recent moderate deviation results from \cite{landon2023tail} for a second class particle on the full-line ASEP with Bernoulli product initial data.  By using the basic coupling and the so-called microscopic concavity coupling from \cite{balazs2009fluctuation}, we transfer these results to establish bounds on the locations of light particles in the open ASEP. We then employ a multi-scale argument to push the light particles step by step into a finite window. Utilizing these estimates, we also derive results on the mixing time of the open ASEP with light particles, using recent results from \cite{gantert2023mixing} on the mixing time of the standard open ASEP.

%Using the basic coupling and a multi-scale argument, we control the position of the light particles over time. Together with the so-called microscopic concavity coupling from  \cite{balazs2009fluctuation}, we transfer the results from stationary initial data to the fan region. As a consequence, we also obtain results on the mixing time of the open ASEP with light particles, using recent results from \cite{gantert2023mixing} for the standard open ASEP. 
 
\subsection{Definition of the model}\label{subsec:def of the model}
The open asymmetric simple exclusion process (ASEP) with light particles, which is also known in some other works as the two-species open ASEP with semi-permeable boundaries, is a continuous-time Markov process on $\{0, 1, \ha\}^N$ with parameters 
    \be\label{eq:conditions open ASEP}\a,\b>0,\quad\g,\delta\geq0,\quad0\leq q<1,\ee
    which models the evolution of particles on the lattice $\lBr N \rBr=\{1,\dots,N\}$. There are two types of particles in the system: ``ordinary'' and ``light''. We denote an ordinary particle as $1$ and a light particle as $\ha$, and we also denote a hole as $0$. 
    Particles move in the bulk with the following rates:
    $$\ha 0 \overset{1}{\ral}0\ha,\quad\quad 1 0 \overset{1}{\ral}01,\quad\quad 1\ha \overset{1}{\ral}\ha1,$$
    $$\ha 0 \overset{q}{\lal}0\ha,\quad\quad 1 0 \overset{q}{\lal}01,\quad\quad 1\ha \overset{q}{\lal}\ha1.$$
    At the system's open boundaries, only ordinary particles are allowed to enter or exit. The light particles (of species $\ha$)  are prohibited from entering or exiting. Specifically, at the left boundary, (ordinary) particles enter the system with rate $\alpha$ and exit the system with rate $\gamma$; similarly, at the right boundary, (ordinary) particles enter the system with rate $\delta$ and exit the system with rate $\beta$. The entry of an (ordinary) particle is prohibited if the target site is already occupied, whether by an (ordinary) particle or a light particle. See Figure \ref{fig:openASEP} for an illustration of the jump rates.

 \begin{figure}
\centering
 \begin{tikzpicture}[scale=0.98]
\draw[thick] (-1.2, 0) circle(1.2);
\draw (-1.2,0) node{reservoir};
\draw[thick] (0, 0) -- (12, 0);
\foreach \x in {1, ..., 12} {
	\draw[gray] (\x, 0.15) -- (\x, -0.15);
}
\draw[thick] (13.2,0) circle(1.2);
\draw(13.2,0) node{reservoir};
\fill[thick] (1, 0) circle(0.2);
%\fill[lgray] (2, 0) circle(0.2);
%\fill[lgray] (8, 0) circle(0.2);
\fill[thick] (5, 0) circle(0.2);
\fill[thick] (4, 0) circle(0.2);
\fill[thick] (7, 0) circle(0.2);
%\fill[lgray] (10, 0) circle(0.2);
\draw[thick, ->] (2, 0.3)  to[bend left] node[midway, above]{$1$} (3, 0.3);
\draw[thick, ->] (5, 0.3)  to[bend right] node[midway]{$\times$} (4, 0.3);
\draw[thick, ->] (7, 0.3) to[bend left] node[midway, above]{$1$} (8, 0.3);
\draw[thick, ->] (10, 0.3) to[bend left] node[midway, above]{$1$} (11, 0.3);
\draw[thick, ->] (10, 0.3) to[bend right] node[midway, above]{$q$} (9, 0.3);
\draw[thick, ->] (-0.1, 0.5) to[bend left] node[midway, above]{$\alpha$} (0.9, 0.4);
\draw[thick, <-] (0, -0.5) to[bend right] node[midway, below]{$\gamma$} (0.9, -0.4);
\draw[thick, ->] (12, -0.4) to[bend left] node[midway, below]{$\delta$} (11, -0.5);
\draw[thick, <-] (12, 0.4) to[bend right] node[midway, above]{$\beta$} (11.1, 0.5);
\node[gray] at (1,-0.4) {\tiny $1$};\node[gray] at (2,-0.4) {\tiny $2$};\node[gray] at (3,-0.4) {\tiny $3$};\node[gray] at (4,-0.4) {\tiny $4$};\node[gray] at (5,-0.4) {\tiny\dots};\node[gray] at (11,-0.4) {\tiny $n$};

\node[shape=circle,scale=1.1,fill=white,draw] (E) at (2,0){} ; 
\node[shape=star,star points=5,star point ratio=2.5,fill=black,scale=0.45] (Y1) at (2,0) {};

\node[shape=circle,scale=1.1,fill=white,draw] (E) at (8,0){} ; 
\node[shape=star,star points=5,star point ratio=2.5,fill=black,scale=0.45] (Y1) at (8,0) {};

\node[shape=circle,scale=1.1,fill=white,draw] (E) at (10,0){} ; 
\node[shape=star,star points=5,star point ratio=2.5,fill=black,scale=0.45] (Y1) at (10,0) {};

\end{tikzpicture} 
\caption{Jump rates in the open ASEP with light particles. The black nodes represent ordinary particles and nodes with stars represent light particles. }
\label{fig:openASEP}
\end{figure}
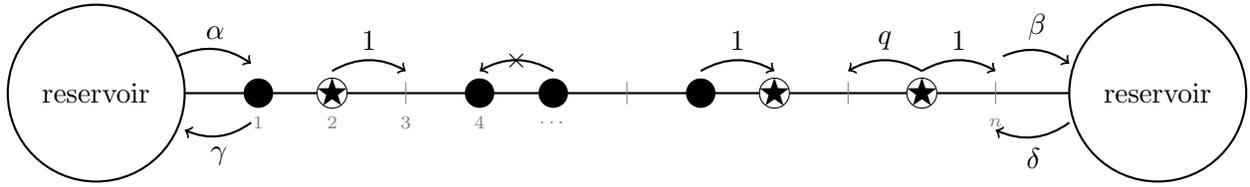 

One can observe that the number $r$ of light particles is conserved. Therefore the ``open ASEP with light particles'' model has $N+1$ irreducible sectors, indexed by the number $r\in \lBr 0,N \rBr:= \{0,1,\dots,N\}$ of light particles. When $r=0$, this model coincides with the standard open ASEP. 

We will work with a re-parameterization of the model by $A,B,C,D$ and $q$, where
\be\label{eq:defining ABCD}
A=\phi_+(\beta,\delta),\quad B=\phi_-(\beta,\delta),\quad C=\phi_+(\alpha,\gamma),\quad D=\phi_-(\alpha,\gamma), 
\ee 
and
\be \label{eq:phi}
\phi_{\pm}(x,y)=\frac{1}{2x}\lb1-q-x+y\pm\sqrt{(1-q-x+y)^2+4xy}\rb, \quad \mbox{for }\; x>0\mbox{ and }y\geq 0.
\ee  

One can check that \eqref{eq:defining ABCD} gives a bijection between \eqref{eq:conditions open ASEP} and 
\be\label{eq:conditions qABCD}
A,C\geq0,\quad -1<B,D\leq 0,\quad 0\leq q<1.
\ee 
We will assume \eqref{eq:conditions open ASEP} and consequently,  \eqref{eq:conditions qABCD} throughout the paper. 
For the standard open ASEP, the quantities $\rho_{\ell}:=1/(1+C)$ and $\rho_r:=A/(1+A)$ defined by the parameters above have nice physical interpretations as the ``effective densities'' near the left and right boundaries of the system.

\begin{figure}[ht]
    \centering
    \begin{tikzpicture}[scale=0.95]
 \draw[scale = 1,domain=6.7:11,smooth,variable=\x,dotted,thick] plot ({\x},{1/((\x-7)*1/3+2/3)*3+5});
 \draw[->] (5,5) to (5,10.2);
 \draw[->] (5.,5) to (11,5);
   \draw[dashed] (5,8) to (8,8);
  \draw[dashed] (8,8) to (8,5);
   \draw[dashed] (8,8) to (10.2,10.2);
   \node [left] at (5,8) {\small$1$};
   \node[below] at (8,5) {\small $1$};
     \node [below] at (11,5) {$A$};
   \node [left] at (5,10) {$C$};
 \draw[dashed] (8,4.9) to (8,5.1);
  \draw[dashed] (4.9,8) to (5.1,8);
 \node [below] at (5,5) {\scriptsize$(0,0)$};
    \node [above] at (6.5,8.5) {LD};\node [above] at (8,8.7) {LD};
    %\node[above] at (8,9) {LD};
    \node at (10.6,10) {\small coexistence line }; 
    \node [below] at (10,6) {HD};  \node [below] at (10,8) {HD}; 
      %\node [above] at (10,9) {HD};
 \node [below] at (6.5,6) {MC}; 
 \node at (6.4,10) {\small{fan}};
 \node at (7.6,10) {\small{shock}};
 \end{tikzpicture} 
    \caption{Phase diagrams for the open ASEP stationary measures. LD, HD, MC respectively
stand for the low density, high density and maximal current phases.}
    \label{fig:phase}
\end{figure}
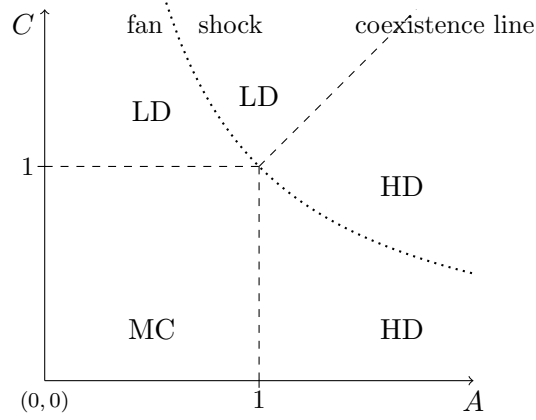

It is known since \cite{derrida1993exact} that the phase diagram of open ASEP involves only two
boundary parameters $A$ and $C$, which exhibits three phases:  
\begin{enumerate}
    \item [$\bullet$] (maximal current phase) $A<1$, $C<1$,
    \item [$\bullet$] (high density phase) $A>1$, $A>C$,
    \item [$\bullet$] (low density phase) $C>1$, $C>A$.
\end{enumerate} 
The boundary $A=C>1$ between the high and low density phases is called the coexistence line.

There are also two regions in the phase diagram distinguished by \cite{derrida2002exact,derrida2003exact}:
\begin{enumerate}
    \item [$\bullet$] (fan region) $AC<1$,
    \item [$\bullet$] (shock region) $AC>1$.
\end{enumerate}
See Figure \ref{fig:phase} for an illustration.
 
\subsection{Location of the light particles}
We denote the state space of the open ASEP with $r$ light particles on the lattice $\lBr N \rBr$ by 
\[
\widehat{\Omega}_{N,r}:=\left\{\tau=(\tau_1,\dots,\tau_N)\, \colon \, \tau_i\in\{0, 1, \ha\} \mbox{ for } i\in\lBr N \rBr,\quad \sum_{i=1}^{N} \mathds{1}{\{ \tau_i = \ha \} } = r\right\}.
\] 
and by $\mt_{N,r}$ the unique stationary measure of the system, which is a probability measure on $\widehat{\Omega}_{N,r}$. %of the open ASEP with a fixed number $r$ of light particles. This is a probability measure on $(\tau_1, \dots, \tau_N)$, where $\tau_i \in \{0, \ha, 1\}$ is the occupation variable at site $i$, for $i\in\lBr N \rBr$. Note that we always have
%\begin{equation}
%    \sum_{i=1}^{N} \mathds{1}{\{ \tau_i = \ha \} } = r.
%\end{equation} 
We denote the locations of the $r$ light particles within the lattice by 
\[
1\leq\lc_1<\dots<\lc_r\leq N.
\]
Next we will introduce our main results, which characterize the distribution of the locations of the light particles under the stationary measure $\mt_{N,r}$, as the system size $N$ approaches infinity. 

We start with the case of a single light particle in the maximal current phase. For integers $a\leq b$, we will write $\llbracket a,b\rrbracket$ to denote $\mathbb{Z}\cap[a,b]$.

\begin{theorem}\label{thm:main thm}
     Assume \eqref{eq:conditions qABCD} and consider the maximal current phase $\max(A,C) \leq 1$ with $AC \neq 1$. Let $r=1$. Suppose   $(a_N)_{N=1,2,\dots}$ and $(b_N)_{N=1,2,\dots}$ are two arbitrary sequences of integers such that  $1\leq a_N<b_N\leq N$ and $a_N,N-b_N\ra\infty$.   Then we have
  %  \begin{enumerate}
     %   \item [$\bullet$] On the maximal current phase $A<1$, $C<1$, we have
        \[\lim_{N\ra\infty}\mt_{N,1}\lb\lc_1\in\lBr a_N,b_N \rBr \rb=0, \] 
        \[\lim_{N\ra\infty}\mt_{N,1}\lb\lc_1\in \lBr 1,a_N\rBr \rb=\frac{\ld-1/2}{\ld-\rd},\]
        \[\lim_{N\ra\infty}\mt_{N,1}\lb\lc_1\in \lBr b_N,N\rBr \rb=\frac{1/2-\rd}{\ld-\rd},\]
        where 
        \[\ld:=\frac{3-C-D-CD}{4(1-CD)}>\frac{1}{2}\quad\mbox{and}\quad\rd:=\frac{1 +A+B-3AB}{4(1-AB)}<\frac{1}{2}.\] 
%\item [$\bullet$]
  %  On the coexistence line $A=C>1$,  $\lc_1/N$ weakly converges to the uniform distribution $U(0,1)$.  
%\end{enumerate}
\end{theorem}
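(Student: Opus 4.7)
The plan is to combine the matrix product ansatz (MPA) for the open ASEP with light particles with precise asymptotics for single-site densities of the standard open ASEP. Let $D,E$ be the bulk matrices and $\langle W|,|V\rangle$ the boundary vectors of the DEHP algebra for the parameters $(\alpha,\beta,\gamma,\delta,q)$, and write $Z_M:=\langle W|(D+E)^M|V\rangle$ and $\rho_j^{(M)}:=\mu_M(\tau_j=1)$. Following the MPA of \cite{uchiyama2008two} for the two-species open ASEP with semi-permeable boundaries, the stationary weight of $\{\lc_1=k\}$ is a matrix element of the form $\langle W|(D+E)^{k-1}\,X\,(D+E)^{N-k}|V\rangle$, where the matrix $X$ for the light particle is, up to a multiplicative constant, the commutator $[D,E]=DE-ED$. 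Expanding $[D,E]$ and interpreting each term as a two-point function on the $(N{+}1)$-site standard open ASEP yields the one-line identity
\[
\langle W|(D+E)^{k-1}[D,E](D+E)^{N-k}|V\rangle \;=\; Z_{N+1}\bigl(\rho_k^{(N+1)}-\rho_{k+1}^{(N+1)}\bigr),
\]
which, after normalization $\sum_{k=1}^{N}\mt_{N,1}(\lc_1=k)=1$, gives the clean identity
\[
\mt_{N,1}(\lc_1=k) \;=\; \frac{\rho_k^{(N+1)} - \rho_{k+1}^{(N+1)}}{\rho_1^{(N+1)} - \rho_{N+1}^{(N+1)}}, \qquad k\in\lBr 1,N\rBr,
\]
expressing the law of $\lc_1$ entirely through the single-site density profile of the standard open ASEP.

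Telescoping produces
\[
\mt_{N,1}\bigl(\lc_1\in\lBr 1,a_N\rBr\bigr)=\frac{\rho_1^{(N+1)}-\rho_{a_N+1}^{(N+1)}}{\rho_1^{(N+1)}-\rho_{N+1}^{(N+1)}},\qquad \mt_{N,1}\bigl(\lc_1\in\lBr b_N,N\rBr\bigr)=\frac{\rho_{b_N}^{(N+1)}-\rho_{N+1}^{(N+1)}}{\rho_1^{(N+1)}-\rho_{N+1}^{(N+1)}},
\]
with the middle region handled by complementation. The proof therefore reduces to three asymptotics of the standard open ASEP in the maximal current phase with $AC\neq 1$: (i) $\rho_1^{(N)}\to\ld$; (ii) $\rho_N^{(N)}\to\rd$; (iii) $\rho_{k_N}^{(N)}\to 1/2$ whenever $k_N,\,N-k_N\to\infty$. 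The hypotheses $a_N\to\infty$, $N-b_N\to\infty$ and $a_N<b_N$ ensure that $a_N+1$ and $b_N$ are deep-bulk sites in the $(N{+}1)$-site system, so (i)--(iii) directly yield the three stated limits of Theorem \ref{thm:main thm}.

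The bulk value (iii) is the well-known maximal-current density $1/2$ and follows from the macroscopic density profile of \cite{wang2023askey} upgraded to pointwise stationary convergence via standard couplings. The substantive step is (i) and (ii): the boundary densities $\rho_1^{(N)}=\langle W|D(D+E)^{N-1}|V\rangle/Z_N$ and $\rho_N^{(N)}=\langle W|(D+E)^{N-1}D|V\rangle/Z_N$ are realized as integrals of explicit polynomials in $2\cos\theta$ against the Askey--Wilson (signed) measure $\pi_{A,B,C,D,q}$ of \cite{uchiyama2004asymmetric,bryc2010askey,bryc2017asymmetric,wang2023askey}, and the closed-form limits $\ld$ and $\rd$ are extracted by a saddle-point analysis using the Askey--Wilson polynomial asymptotics of \cite{uchiyama2004asymmetric}. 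The main obstacle is that when $\max(A,C)\geq 1$ the Askey--Wilson measure acquires discrete atoms at the endpoints of $[-2,2]$: the continuous part of $\pi$ contributes the bulk value $1/2$ while the atomic corrections produce the genuine boundary values $\ld$ and $\rd$, and the assumption $AC\neq 1$ is needed to exclude the degenerate corner $A=C=1$ at which two atoms coalesce and the measure collapses.
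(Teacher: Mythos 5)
Your skeleton is the same as the paper's: derive the identity $\mt_{N,1}(\lc_1=k)=\bigl(\rho_k^{(N+1)}-\rho_{k+1}^{(N+1)}\bigr)/\bigl(\rho_1^{(N+1)}-\rho_{N+1}^{(N+1)}\bigr)$ from the two-species matrix product ansatz with $\A=\D\E-\E\D$, telescope, and feed in three density limits (boundary densities $\ld,\rd$ and bulk density $1/2$). That is exactly Proposition \ref{prop:simple relation} combined with Propositions \ref{prop:bulk density} and \ref{prop:boundary density}. However, there are two concrete gaps in how you justify the ingredients.

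First, the role of $AC\neq 1$ is misidentified, and a step you need it for is missing. You write the ``clean identity'' after normalizing by $\sum_k\mt_{N,1}(\lc_1=k)=1$, but the MPA of \cite{uchiyama2008two} only produces the stationary measure when the normalization $\hZ_N=[y]\ll W|(\E+\D+y\A)^N|V\rr$ is nonzero; this is not automatic and is precisely where $AC\neq 1$ enters in the paper (Lemma \ref{lem:deno0}, via $\varepsilon_1\beta_0\neq 0$ in the Askey--Wilson representation), and it is also what makes the denominator $\rho_1^{(N+1)}-\rho_{N+1}^{(N+1)}=\hZ_N/Z_{N+1}$ meaningful at finite $N$ (asymptotically it is $\ld-\rd$, which vanishes exactly when $AC=1$, where $\mu_N$ is a Bernoulli product measure). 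Your explanation --- that $AC\neq1$ excludes a coalescence of atoms of the Askey--Wilson measure --- is not the actual mechanism, and in the maximal current phase $\max(A,C)\le 1$ the relevant Askey--Wilson (signed) measures have no atoms at all, so the picture ``continuous part gives $1/2$, atoms give $\ld,\rd$'' cannot be the source of the boundary values here.

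Second, the analytic inputs (i)--(iii) are asserted rather than proved, and the proposed routes are shakier than needed. For (i)--(ii) the paper does not perform any saddle-point analysis of Askey--Wilson polynomial asymptotics: it identifies $\ld$ as the first-site marginal of the $N\to\infty$ semi-infinite limit measure, which is expressed exactly through an Askey--Wilson signed measure (Lemma \ref{lem:characterization of ld}, from \cite{bryc2017asymmetric,yang2024limits}), and then extracts the closed form from the first-moment identity for such measures (Lemma \ref{lem:AW moment}), plus a stochastic-sandwiching continuity argument on the boundary curves; your saddle-point route may well work but is not carried out. For (iii), ``upgrading the macroscopic profile of \cite{wang2023askey} to pointwise convergence via standard couplings'' is not a proof: single-site convergence in the bulk is a genuinely local statement, and the paper instead invokes Liggett's theorem (Proposition \ref{prop:bulk density}, via \cite{liggett1999stochastic,nestoridi2023approximating}). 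With these two points repaired --- nonvanishing of $\hZ_N$ under $AC\neq1$, and actual proofs (or correct citations) for the three density limits --- your argument coincides with the paper's.
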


The above result will be proved in Section \ref{subsec: 3.2}.  

Next, we present the following result for the high density and low density phases.

\begin{theorem}\label{thm:mainHighLow}
     Assume \eqref{eq:conditions qABCD} and consider the fan region where $AC \leq 1$.  Suppose $(a_N)_{N=1,2,\dots}$ and $(b_N)_{N=1,2,\dots}$ are two arbitrary sequences of integers such that  $1\leq a_N<b_N\leq N$ and $a_N,N-b_N\ra\infty$. Then we have
    \begin{enumerate}
        \item [$\bullet$] In the high density phase $A\geq 1$, $A>C$,  
        \[
        \lim_{N\ra\infty}\mt_{N,r}\lb\lc_1,\dots,\lc_r\in \lBr 1,a_N\rBr \rb=1.
        \] 
        \item [$\bullet$] In the low density phase $C\geq 1$, $C>A$,  
        \[
        \lim_{N\ra\infty}\mt_{N,r}\lb\lc_1,\dots,\lc_r\in \lBr b_N,N\rBr \rb=1.
        \] 
\end{enumerate}
Moreover, when $ABCD\notin\{q^{-l}:l\in\NN_0\}$ and $r=1$, the above statements for the high density and low density phases also hold in the shock region where $AC >1$.
\end{theorem}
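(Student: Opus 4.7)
The statement has two parts: the assertion for general $r$ in the fan region $AC \leq 1$, and the extension to the shock region $AC > 1$ in the case $r=1$ under the genericity condition $ABCD \notin \{q^{-l}\}$. I would handle them by quite different methods. In both cases, the particle-hole symmetry of the open ASEP (swapping species $0$ and $1$ and exchanging the roles of $(A,C)$ and $(B,D)$) reduces the low density phase to the high density phase, so I focus throughout on the latter.

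\textbf{Part one: general $r$ in the fan region.} The heuristic is that the $r$ light particles behave as second class particles in an ASEP bulk of density $\rho_r = A/(1+A) > 1/2$, and therefore feel an effective leftward drift of speed $1 - 2\rho_r$. To make this rigorous I would couple the open ASEP with $r$ light particles to a full-line ASEP with Bernoulli product initial data of density $\rho_r$, in which the light particles correspond to second class particles. The moderate deviation tail bounds from \cite{landon2023tail} control the displacement of a single second class particle, and for $r \geq 2$ the microscopic concavity coupling of \cite{balazs2009fluctuation} upgrades this to joint control of all $r$ second class particles simultaneously. Transferring the estimate back to the open ASEP through monotonicity of the basic coupling yields the key displacement bound: starting the dynamics with all light particles inside $\lBr 1, m \rBr$, after time $O(m)$ they lie inside $\lBr 1, m/2 \rBr$ with probability $1 - o(1)$.

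\textbf{Multi-scale bootstrap.} Using invariance of $\mt_{N,r}$, this one-step estimate can be iterated. Starting from the trivial bound that all light particles lie in $\lBr 1, N \rBr$ and applying the displacement estimate $O(\log(N/a_N))$ times contracts the window by a constant factor at each step; since $a_N \to \infty$, after logarithmically many scales the particles all lie in $\lBr 1, a_N \rBr$. The chief obstacle will be the treatment of boundary effects: as the light particles approach site $1$, the bulk drift heuristic can fail, so the comparison with the full-line ASEP must be replaced by a monotone coupling with a suitable half-line model that carries a density-$\rho_r$ reservoir at the right end, and one must verify that the displacement estimate persists uniformly across scales without losing its high-probability bound.

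\textbf{Part two: $r=1$ in the shock region.} Here I would invoke the exact identity already established in the $r=1$ analysis of the paper: the matrix product ansatz expresses $\mt_{N,1}(\lc_1 = k)$ as a closed-form expression in the single-site densities $\mu_N(\tau_k = 1)$ of the standard open ASEP. Under the genericity condition $ABCD \notin \{q^{-l} : l \in \NN_0\}$, the Askey--Wilson signed measure representation of \cite{bryc2017asymmetric, wang2023askey} remains available throughout the shock region, and its large-$N$ asymptotics show that in the high density phase the single-site density converges to $\rho_r = A/(1+A) > 1/2$ away from the left edge, with strictly smaller values confined to an $O(1)$ boundary layer near site $1$. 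Inserting this profile into the matrix product ansatz identity and normalizing reveals that all but $o(1)$ of the mass of $\lc_1$ under $\mt_{N,1}$ sits in $\lBr 1, a_N \rBr$, yielding the claim.
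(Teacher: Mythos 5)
Your two-pronged strategy is the same as the paper's. For the shock region with $r=1$, your argument is essentially identical to the paper's proof: Proposition \ref{prop:simple relation} gives $\mt_{N,1}(\lc_1\in\lBr 1,a_N\rBr)$ as a ratio of density differences, and all that is needed is $\lim_N\mu_{N+1}(\tau_{a_N+1}=1)=A/(1+A)=\rd$ together with $\ld\neq\rd$ (which holds because $AC\neq 1$); the only cosmetic difference is that the paper obtains the bulk density from Liggett's theorem (Proposition \ref{prop:bulk density}) rather than from Askey--Wilson asymptotics, reserving the latter for the boundary densities. For general $r$ in the fan region, your toolbox (Landon--Sosoe moderate deviations, microscopic concavity, basic coupling with the full-line ASEP, multi-scale contraction using stationarity) is exactly the paper's.

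However, two steps of part one would fail as written. First, the transfer of the second class particle estimate back to the open ASEP cannot go ``through monotonicity of the basic coupling'': for $q>0$ the basic coupling does not preserve the relative order of second class particles in two coupled ASEPs (the paper notes this right after Theorem \ref{thm:BSCoupling}). The microscopic concavity coupling is needed precisely for this transfer, applied to the component-wise ordered pair of environments supplied by the stochastic domination $\mu_N\succeq\textup{Bern}_N(A/(1+A))$ of Lemma \ref{lem:StochasticDomination} --- and that domination is exactly where the hypothesis $AC\leq 1$ enters, a point your sketch never identifies even though it is the reason the general-$r$ statement is restricted to the fan region. Conversely, handling $r\geq 2$ particles inside the Bernoulli environment is not what microscopic concavity does in the paper: there one spreads the $r$ second class particles far apart, applies the single-particle moderate deviation bound to each, and compares with the original ordered initial positions via the order-comparison Lemma \ref{lem:CouplingPositions}. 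Second, your one-step contraction ``with probability $1-o(1)$'' is not available when the rightmost light particle starts within $O(1)$ of the \emph{right} boundary; the paper's escape step (Proposition \ref{pro:EscapeLight}) only succeeds with constant probability per attempt there and is boosted by $O(\log N)$ iterations, using the color projection (Remark \ref{rem:ProjectionOpenASEP}) to keep the background $\mu_N$-distributed at restart times --- your appeal to invariance of $\mt_{N,r}$ can play that role, but the boundary you flag, at site $1$, is the easier one (finite speed of propagation suffices; no half-line reservoir model is needed), while the right-boundary issue your sketch does not address is where the extra work actually lies.
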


The proof of the above theorem is separated into two parts, using entirely different techniques. When $r=1$, $ABCD\notin\{q^{-l}:l\in\NN_0\}$ and $AC\neq1$, we prove the theorem in Section \ref{subsec: 3.2}. For the remaining cases, the proof is located at the end of Section \ref{sec:EscapeLight}.

\begin{remark}
We conjecture that the requirement of $AC \leq 1$ in Theorem \ref{thm:mainHighLow} can be removed using a suitable local approximation result of the stationary measure of the standard open ASEP by a Bernoulli product measure, similar to \cite{nestoridi2023approximating,yang2024limits}. We leave this point for future research.  
\end{remark}

On the coexistence line, we have the following result in the case of a single light particle. 

\begin{theorem}\label{thm:mainCoexistence}
     Assume \eqref{eq:conditions qABCD}, $ABCD\notin\{q^{-l}:l\in\NN_0\}$ and consider
    coexistence line $A=C>1$. Let $r=1$, then the distribution of $\lc_1/N$ weakly converges to the uniform distribution on $[0,1]$.  
\end{theorem}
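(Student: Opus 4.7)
The plan is to combine two ingredients developed earlier in the paper for $r=1$: (i) the matrix-product-ansatz identity that expresses $\mt_{N,1}(\lc_1=\cdot)$ as a normalized discrete derivative of the one-site density profile $j \mapsto \rho^{(N)}_j := \mu_N(\tau_j=1)$ of the standard open ASEP, derived in the course of proving Theorem \ref{thm:main thm}; and (ii) the single-site density asymptotics for $\mu_N$ on the coexistence line, which in Section \ref{subsec: 3.2} are obtained by combining the macroscopic density result of \cite{wang2023askey} with the coupling and local-convergence techniques of \cite{bahadoran2006convergence}.

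Concretely, the first input gives a formula of the schematic shape
\begin{equation*}
\mt_{N,1}(\lc_1 = k) \;=\; \frac{\rho^{(N)}_k - \rho^{(N)}_{k+1}}{Z_N}, \qquad k \in \lBr 1, N-1 \rBr,
\end{equation*}
up to explicit boundary atoms at $k\in\{1,N\}$, where $Z_N = \rho^{(N)}_1 - \rho^{(N)}_N + o(1)$ by telescoping. The second input yields, on the coexistence line $A = C > 1$ with $ABCD \notin \{q^{-l} : l \in \NN_0\}$, the linear density profile
\begin{equation*}
\rho^{(N)}_{\lfloor xN \rfloor} \;\longrightarrow\; (1-x)\rho_\ell + x\rho_r, \qquad N \to \infty, \quad x \in (0,1),
\end{equation*}
where $\rho_\ell = 1/(1+A)$ and $\rho_r = A/(1+A) = 1-\rho_\ell$.

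Combining the two ingredients via summation by parts, for any continuous $f : [0,1] \to \RR$ we write
\begin{equation*}
\sum_{k=1}^{N-1} f(k/N)\bigl(\rho^{(N)}_k - \rho^{(N)}_{k+1}\bigr) \;=\; f(1/N)\rho^{(N)}_1 - f((N-1)/N)\rho^{(N)}_N + \sum_{k=2}^{N-1}\bigl(f(k/N) - f((k-1)/N)\bigr)\rho^{(N)}_k,
\end{equation*}
which by a Riemann-sum argument driven by the linear profile converges to $-(\rho_r - \rho_\ell)\int_0^1 f(x)\,dx$. Dividing by $Z_N \to \rho_\ell - \rho_r$ yields $\EE_{\mt_{N,1}}[f(\lc_1/N)] \to \int_0^1 f(x)\,dx$, which is precisely the weak convergence of $\lc_1/N$ to the uniform distribution on $[0,1]$.

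The main obstacle is upgrading the pointwise density asymptotic in (ii) to something strong enough to drive the summation-by-parts argument without being sensitive to microscopic oscillations of the profile. We address this by exploiting the asymptotic monotonicity of the density profile on the coexistence line (which interpolates linearly between $\rho_\ell$ and $\rho_r$): the total variation of $k \mapsto \rho^{(N)}_k$ equals $|\rho_r - \rho_\ell| + o(1)$, so the discrete derivative carries no net signed contribution beyond its macroscopic counterpart, and the Riemann-sum approximation is legitimate.
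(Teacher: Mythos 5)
Your proposal is correct in substance and rests on exactly the same two pillars as the paper: the matrix-product identity of Proposition \ref{prop:simple relation} and the linear coexistence-line density profile of Proposition \ref{prop:coexistence} (together with the boundary values, which are also covered by Proposition \ref{prop:coexistence} at $\theta\in\{0,1\}$, or by Proposition \ref{prop:boundary density}). The only real difference is in how the limit is extracted. The paper never needs summation by parts: since \eqref{eq:simple relation} already gives the probability of a macroscopic \emph{interval}, $\mt_{N,1}(\theta_1\leq\lc_1/N\leq\theta_2)$ is a ratio of two density differences, and plugging in the limits at $\theta_1,\theta_2,0,1$ immediately yields $\theta_2-\theta_1$, i.e.\ convergence of the distribution function to that of the uniform law. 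Your route through test functions and Abel summation reaches the same answer but creates the ``microscopic oscillation'' worry artificially, and your resolution of it -- asymptotic monotonicity and the total-variation statement for $k\mapsto\mu_{N+1}(\tau_k=1)$ -- is asserted without proof. It is in fact true, and the paper itself records the one-line argument (in the $\theta\in\{0,1\}$ part of the proof of Proposition \ref{prop:coexistence}): the left-hand side of \eqref{eq:simple relation} is strictly positive, so all differences $\mu_{N+1}(\tau_k=1)-\mu_{N+1}(\tau_l=1)$, $k<l$, share the sign of the denominator, which gives exact monotonicity of the profile; alternatively you can sidestep monotonicity altogether by testing only against $C^1$ (or Lipschitz) functions, a convergence-determining class, for which bounded convergence plus the pointwise profile limit suffices. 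So your argument is fixable with either remark, but the paper's direct use of the interval form of Proposition \ref{prop:simple relation} is both shorter and free of this issue.
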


The above result will be shown at the end of Section \ref{sec:proof of coexistence line density}. We observe from this result that, on the coexistence line, the asymptotic location of a single light particle is the same (uniform distribution) as the asymptotic location of the shock in the standard open ASEP, see Remark \ref{rmk:shock} for details.

Our three main theorems each provide partial answers to Question \ref{question} for different values of $r$ and within different parts of the phase diagram. When $r=1$, the distribution of the location of the light particle is characterized throughout the phase diagram, except at the `triple point' $(A,C)=(1,1)$, where our methods do not apply. We conjecture that at the triple point, for $r=1$, the normalized location of the light particle converges to the uniform distribution on $[0,1]$, i.e., it has the same asymptotics as on the coexistence line. We leave the investigation of this point for future research. As mentioned in Section \ref{subsec:preface}, the proofs of the main theorems utilize very different techniques for the case $r=1$, and for $r \geq 1$ in the fan region of the high and low density phases.

%The main theorems above characterize the distribution of the location of the light particle throughout the phase diagram, except at the `triple point' $(A,C)=(1,1)$, where our methods do not apply. We conjecture that in the triple point, for $r=1$, the re-normalized position of the light particle is converging to the Uniform distribution on $[0,1]$, but leave the investigation of this point for future research. Let us also mention that the proof of the main theorems utilizes very different techniques for the case $r=1$, and for the fan region of the high and low density phase with $r \geq 1$.

\subsection{Mixing times for the open ASEP with light particles in the high and low density phases}

For the open ASEP with light particles, we are also interested in quantifying the speed of convergence to the stationary measure $\widehat{\mu}_{N,r}$. To do so, we consider the \textbf{total-variation distance}
\begin{equation} \label{def:TVdistance}
\TV{ \nu - \nu^{\prime}} := \max_{A \subseteq \Omega} \lb\nu(A) - \nu^{\prime}(A)\rb
\end{equation}
 between two probability measures $\nu$ and $\nu^{\prime}$ on a common finite probability space $\Omega$. 
 We define the total-variation \textbf{mixing time} for open ASEP with light particles on state space $\widehat{\Omega}_{N,r}$ for all $\varepsilon \in (0,1)$ as
 \begin{equation}
     t^N_{\textup{mix}}(\varepsilon) := \inf \left\{ t \geq 0 \, \colon \, \max_{\tilde{\tau} \in \widehat{\Omega}_{N,r}} \TV{ \P(\tau(t) \in \, \cdot \, | \, \tau(0)=\tilde{\tau}) - \widehat{\mu}_{N,r} } \leq \varepsilon \right\},
 \end{equation}
where $\tau(t)=\lb\tau_1(t),\dots,\tau_N(t)\rb$ denotes the occupation variables for the process at time $t\geq0$. 
 A general introduction to mixing times for Markov chains can be found in \cite{LPW:markov-mixing}. 
We have the following result on the mixing time in the fan region of the high and the low density phases.
\begin{theorem}\label{thm:MainMixingTime}
Assume \eqref{eq:conditions qABCD} and $A,C>0$. Consider the fan region $AC \leq 1$ of either the high density phase where $A > \max(1,C)$, or the low density where $C > \max(1,A)$. Consider the mixing time of the open ASEP model with $r$ light particles on the lattice $\lBr N \rBr$.
% Assume \eqref{eq:conditions qABCD} and consider the open ASEP with light particles in the fan region $AC \leq 1$ for some $A,C>0$, and a fixed number of light particles $r\in \N$. Suppose that we consider either the high density phase where $A\geq \max(1,C)$, or in the low density where $C \geq \max(1,A)$. 
Then there exist constants $c_0,C_0>0$, depending only on $A$ and $C$,  such that for every $\varepsilon\in(0,1)$,
\begin{equation}\label{eq:MixingEquation}
   c_0 \leq  \liminf_{N \rightarrow \infty} \frac{t^N_{\textup{mix}}(\varepsilon)}{N} \leq   \limsup_{N \rightarrow \infty} \frac{t^N_{\textup{mix}}(\varepsilon)}{N} \leq C_0 . 
\end{equation} 
\end{theorem}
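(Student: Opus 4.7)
The proof naturally decomposes into an upper and a lower bound in \eqref{eq:MixingEquation}, and the plan is to combine the mixing-time result for the standard open ASEP from \cite{gantert2023mixing} with the concentration estimates for the light-particle positions developed in the proofs of Theorems \ref{thm:main thm} and \ref{thm:mainHighLow}. I would begin the upper bound by observing that the projection $\pi:\widehat{\Omega}_{N,r}\to\{0,1\}^N$ sending $\ha \mapsto 1$ is a Markov function of the dynamics: if $\tau(t)$ is the open ASEP with light particles, then $\pi(\tau(t))$ evolves exactly as the standard open ASEP on $\lBr N \rBr$, and its invariant law is $\mu_N$. Applying the mixing-time bound of \cite{gantert2023mixing} therefore yields $O(N)$ mixing of the ordinary-particle/hole skeleton.

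It then remains to control the positions of the $r$ light particles. I plan to use the basic coupling between two copies of the dynamics that share the same skeleton but differ in the placement of the light particles; recalling that the swap rates $1\ha\leftrightarrow\ha 1$ and the jump rates $\ha 0\leftrightarrow 0\ha$ are precisely those of a second class particle in a first class environment, the light-particle positions can be viewed as second class particles in a standard ASEP background. In the fan region of the high density phase, $\rho_r = A/(1+A) > 1/2$, so a second class particle has macroscopic speed $(1-q)(1-2\rho_r)<0$ toward the left boundary. Using the moderate deviation estimates from \cite{landon2023tail} together with the microscopic concavity coupling of \cite{balazs2009fluctuation}, transferred to the open system as in the proof of Theorem \ref{thm:mainHighLow}, every light particle reaches the $o(N)$ window near the left boundary in time $O(N)$ with high probability. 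Combined with the mixing of the skeleton this gives the upper bound $\limsup_{N\to\infty} t^N_{\textup{mix}}(\varepsilon)/N \leq C_0$; the low density phase is symmetric.

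For the lower bound, I would exhibit a specific initial configuration witnessing slow mixing: in the high density phase, let $\tilde{\tau}$ place the $r$ light particles at positions $\{N-r+1,\ldots,N\}$. Since each light particle performs nearest-neighbour jumps at uniformly bounded rate, a Poisson large-deviation estimate and a union bound over the $r$ particles show that with probability tending to $1$, no light particle reaches the left half $\lBr 1, \lfloor N/2\rfloor \rBr$ of the lattice by time $c_0 N$, for a sufficiently small constant $c_0>0$. On the other hand, by Theorem \ref{thm:mainHighLow} the event that all light particles lie in $\lBr 1, \lfloor N/2 \rfloor\rBr$ has probability tending to $1$ under $\mt_{N,r}$, so the total-variation distance remains of order $1$ up to time $c_0 N$, giving $\liminf_{N\to\infty} t^N_{\textup{mix}}(\varepsilon)/N \geq c_0$.

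The hard part is the upper bound, and specifically the control of the light-particle escape uniformly in the initial configuration and for all $r$ particles simultaneously. The moderate deviation results of \cite{landon2023tail} are stated for a single second-class particle on the infinite line with Bernoulli product initial data, so transferring them to the open system with an arbitrary starting configuration, and running the argument jointly for the $r$ particles, is the most delicate step. The multi-scale argument developed for Theorem \ref{thm:mainHighLow} should be the main tool, but the simultaneous use of the skeleton mixing from \cite{gantert2023mixing} must be threaded in carefully to avoid any circular dependency between the mixing of the skeleton and the placement of the light particles.
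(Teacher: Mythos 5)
Your upper bound has two problems. First, the map sending $\ha\mapsto 1$ is \emph{not} a Markov projection onto the standard open ASEP: a light particle sitting at site $N$ can never exit, whereas the projected configuration would have to lose its particle there at rate $\beta>0$ (and similarly at the left boundary at rate $\gamma$, with the symmetric failure for $\ha\mapsto 0$ because a light particle at site $1$ blocks entry at rate $\alpha$). This is precisely why the paper introduces a dynamical \emph{coloring} of the light particles (Lemma \ref{lem:LightProjection}): each light particle carries a color in $\{0,1\}$ that is resampled at the boundaries at rates $\alpha,\gamma$ and $\beta,\delta$ and exchanged between adjacent light particles, and only the resulting color projection has the law of the standard open ASEP. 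Your skeleton-mixing step can be repaired by substituting this lemma, but as written the claim is false. Second, and more seriously, driving the light particles of both copies into a small window near the left boundary does not complete the argument: total-variation mixing concerns the joint law including the light-particle positions, and confinement to a window (even of constant size) says nothing about matching their law given the skeleton. The paper closes exactly this gap with a separate coalescence step (Lemma \ref{lem:Premixing}): after the skeletons have \emph{coalesced} -- note that Theorem \ref{thm:MixingTimesOpenASEP} is used as a coalescence statement under the basic coupling, not merely a mixing bound -- one makes order $\log N$ attempts, spaced of order $\sqrt N$ apart, each of which brings all light particles of both copies into a window of constant size (via Proposition \ref{pro:EscapeLight}, Lemma \ref{lem:Traverse} and Lemma \ref{lem:HittingWindow}) and then merges the two configurations exactly with uniformly positive probability in time $1$; a standard coupling-to-mixing conversion (Corollary 2.4 of \cite{gantert2023mixing}) then gives the upper bound. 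Your sentence ``combined with the mixing of the skeleton this gives the upper bound'' skips this entire mechanism, which is the heart of the proof.

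Your lower bound, by contrast, is correct but takes a different route from the paper: you place the light particles at the right end, use finite propagation speed to keep them out of the left half up to time $c_0N$, and contrast this with Theorem \ref{thm:mainHighLow}, which puts them near the left boundary under $\mt_{N,r}$ (no circularity, since that theorem is proved before the mixing result). The paper instead deduces the lower bound directly from the color projection together with the order-$N$ lower bound for the standard open ASEP in \cite{gantert2023mixing}; your argument is a fine alternative, at the cost of invoking the stationary concentration of the light particles rather than only the one-species result.
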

The fact that the constants $c_0,C_0$ in \eqref{eq:MixingEquation} do not depend on the choice of $\varepsilon$ is also known as \textbf{pre-cutoff}. Let us remark that Theorem \ref{thm:MainMixingTime} is compatible with Theorem 1.5 in \cite{gantert2023mixing}, which states that the mixing time of the standard open ASEP  in the high and the low density phase is of the same order $N$. In the maximal current phase and the coexistence line, there are to our best knowledge no mixing time results available for the open ASEP without light particles, apart from the special case of the open TASEP \cite{schmid2022mixing,schmid2023mixing,elboim2024mixing} and the triple point \cite{gantert2023mixing}.

%When $AC=1$, we use \zongrui{To be written} and prove Theorem \ref{thm:main thm} in Section \ref{sec:Proof for AC=1}.

%\begin{proof} 
%    The proof essentially follows from taking a limit of equations \eqref{eq:normalization ratio} and \eqref{eq:relation open ASEP to light} and using the limiting bulk and boundary densities stated in Proposition \ref{prop:bulk density}, Conjecture \ref{prop:bulk density coexistence} and Proposition \ref{prop:boundary density}. When $AC\neq 1$ we have $\ld\neq\rd$ so that the RHS $\hZ_N/Z_{N+1}$ of \eqref{eq:normalization ratio} does not converge to $0$.  
%\end{proof}
 
\subsection{Density of the standard open ASEP}\label{subsec:density of standard open ASEP}
In the proofs of Theorems \ref{thm:main thm}, \ref{thm:mainHighLow} and \ref{thm:mainCoexistence}, which concern the open ASEP with a single light particle, we need to obtain several results related to the limits of particle densities in the standard open ASEP. We believe these results have independent interest. They can be regarded as the ``microscopic limits'' of the particle density, focusing solely on the densities at a single site, as opposed to the ``macroscopic limits'' of particle densities discussed in \cite{wang2023askey}. The latter concern the numbers of particles in intervals of length proportional to $N$.

The next result concerns the particle densities of the open ASEP at the bulk of the system, everywhere in the phase diagram except for the coexistence line $A=C>1$.

%The next two results concern the particle densities of open ASEP at the bulk of the system.

\begin{proposition}%[Corollary of an extension (for general parameters) of Theorem 3.29 in \cite{liggett1999stochastic}]
[c.f.\ Remark A.3 in \cite{nestoridi2023approximating}] \label{prop:bulk density}
Assume \eqref{eq:conditions qABCD}. Suppose $(a_N)_{N=1,2,\dots}$ is an arbitrary sequence of integers such that $a_N,N-a_N\ra\infty$. We have 
    \be\lim_{N\rightarrow\infty}\m_N(\tau_{a_N}=1)=\begin{cases}
   A/(1+A)  & \quad A>C, A>1 \quad\text{ (high density)}, \\
   1/(1+C) & \quad C>A, C>1 \quad\text{ (low density)}, \\
   1/2 & \quad A\leq1, C\leq1 \quad\text{ (maximal current)}.
\end{cases}\ee
 See Figure \ref{fig:density bulk} for an illustration of these densities.
\end{proposition}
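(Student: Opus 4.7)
My strategy is to combine the matrix product ansatz (MPA) of Derrida--Evans--Hakim--Pasquier with the Askey--Wilson signed-measure representation of the open ASEP stationary measure developed in \cite{uchiyama2004asymmetric, bryc2010askey, bryc2017asymmetric, wang2023askey}. By the MPA the single-site density admits the exact expression
$$\mu_N(\tau_{a_N} = 1) = \frac{\langle W | (D+E)^{a_N-1}\, D\, (D+E)^{N-a_N} | V\rangle}{\langle W | (D+E)^N | V\rangle},$$
where $D, E, \langle W|, |V\rangle$ are the USW matrices and boundary vectors of \cite{uchiyama2004asymmetric}.

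Under the Askey--Wilson spectral decomposition used in \cite{bryc2010askey, bryc2017asymmetric, wang2023askey}, $D+E$ becomes multiplication by $\lambda(y) := 2(1+y)/(1-q)$ against an Askey--Wilson signed measure $\nu_{A,B,C,D,q}$, so both matrix elements reduce to Laplace-type integrals of the form $\int \lambda(y)^N f(y)\, \nu_{A,B,C,D,q}(dy)$. In the denominator $f$ is essentially the constant function associated to $\langle W|$ and $|V\rangle$, while in the numerator $f$ encodes the spectral symbol $\varphi(y)$ of the operator $D$ sitting at the middle position.

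The asymptotic analysis then proceeds phase by phase. The support of $\nu_{A,B,C,D,q}$ consists of $[-1,1]$ together with finitely many atoms at $y_X := (X + X^{-1})/2$ for $X \in \{A,B,C,D\}$; the constraints \eqref{eq:conditions qABCD} ensure that only $y_A$ and $y_C$ can exceed $1$. In the high density phase $A > 1$, $A > C$, the rightmost point of the support is $y_A$ and the corresponding atom dominates both integrals; evaluating the symbol of $D$ at $y_A$ yields $\varphi(y_A) = A/(1+A)$. In the low density phase $C > 1$, $C > A$, the analogous analysis at $y_C$ gives $\varphi(y_C) = 1/(1+C)$. In the maximal current phase $A, C \leq 1$, no atoms of $\nu_{A,B,C,D,q}$ lie outside $[-1,1]$, so the Laplace integrals concentrate at the endpoint $y=1$; a Watson-type expansion produces the same leading factor $\lambda(1)^N = (4/(1-q))^N$ and the same polynomial corrections in the numerator and denominator, and the ratio converges to $\varphi(1) = 1/2$.

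The main obstacle is making the Laplace analysis rigorous in the signed-measure regime, where $\nu_{A,B,C,D,q}$ can carry negative weights, and in particular controlling the resonances at the exceptional parameter set $ABCD \in \{q^{-\ell} : \ell \in \NN_0\}$. For $ABCD \notin \{q^{-\ell}\}$ the framework of \cite{wang2023askey} applies directly, both to the asymptotics of the partition function in the denominator and to the perturbed version in the numerator. The exceptional parameters form a discrete set; since $\mu_N(\tau_{a_N} = 1)$ is a rational function of the boundary rates at fixed $N$ and the claimed limit is continuous on the interior of each phase, a continuity-in-parameters argument extends the limit to all parameters satisfying \eqref{eq:conditions qABCD} and the respective phase conditions. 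The hypothesis $AC \neq 1$ in the maximal current phase is what prevents the pole at $y = y_A$ (or $y_C$) from colliding with the endpoint $y = 1$ of the continuous support, which would otherwise change the order of the Laplace asymptotics.
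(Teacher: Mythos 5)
Your plan has a genuine gap at its core step. You treat the numerator $\langle W|(\D+\E)^{a_N-1}\,\D\,(\D+\E)^{N-a_N}|V\rangle$ as if it reduced to a single Laplace-type integral $\int \lambda(y)^N f(y)\,\nu(dy)$ with a ``spectral symbol $\varphi(y)$ of $\D$.'' But $\D$ does not commute with $\D+\E$, so it is not a multiplication operator in the spectral representation that diagonalizes $\D+\E$; the numerator is a \emph{double} integral $\iint \lambda(x)^{a_N-1}\lambda(y)^{N-a_N}K_{\D}(x,y)\,\nu(dx)\,\nu(dy)$ involving the off-diagonal kernel of $\D$ between (generalized) eigenfunctions of $\D+\E$, and the whole difficulty of the bulk density lies in controlling this kernel in a two-sided limit $a_N,\,N-a_N\to\infty$ (atoms at $y_A$ or $y_C$ in the high/low density phases; an endpoint analysis at $x=y=1$ against a \emph{signed} continuous part in the maximal current phase, where the claimed cancellation of ``the same polynomial corrections'' and the value $\varphi(1)=1/2$ are exactly what needs proof). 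Note also that the Askey--Wilson process representation of \cite{bryc2017asymmetric,wang2023askey} in the form used in this paper (Lemma \ref{lem:characterization of ld}) only yields densities at the \emph{boundary} sites, because deforming the parameter at a single interior site violates the time-ordering of the Askey--Wilson process; what is available in the bulk from that framework is the macroscopic (window-averaged) profile, not the single-site density. So ``the framework of \cite{wang2023askey} applies directly'' is not accurate as stated, and this is precisely why the paper does not argue this way: Proposition \ref{prop:bulk density} is obtained by invoking Liggett's local-limit result \cite[Theorem 3.29]{liggett1999stochastic} (which identifies the weak limit of $\mu_N$ around a bulk site as a mixture of Bernoulli product measures) together with its extension to $q\in(0,1)$ in \cite[Proposition A.2, Remark A.3]{nestoridi2023approximating}.

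A secondary gap is your treatment of the exceptional parameters $ABCD\in\{q^{-l}:l\in\NN_0\}$: pointwise convergence at a dense set of parameters plus continuity of the limiting expression does \emph{not} by itself give convergence at the exceptional parameters, since the convergence in $N$ need not be uniform in the parameters. The paper faces the same issue for the boundary densities and resolves it with monotonicity, i.e.\ the stochastic sandwiching of Lemma \ref{lem:sandwiching}, which squeezes $\mu_N$ between stationary measures at nearby non-exceptional parameters; you would need such a monotone comparison (or a uniform estimate), not continuity alone. Finally, the clause about ``$AC\neq 1$ in the maximal current phase'' addresses a hypothesis the proposition does not impose: within $A,C\le 1$ one has $AC=1$ only at $A=C=1$, where $\mu_N$ is a Bernoulli-$1/2$ product measure and the claim is immediate, so nothing should hinge on excluding it.
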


This result can be regarded as a corollary of \cite[Theorem 3.29]{liggett1999stochastic}. However, it is presented therein only in the open TASEP case where $q=0$. For a more detailed explanation of how the result extends to $q\in(0,1)$, we refer the reader to Proposition A.2 and Remark A.3 in \cite{nestoridi2023approximating}.

\begin{figure}  
    \begin{centering}
    \begin{tikzpicture}[scale=0.95]
 \draw[->] (5,5) to (5,10.2);
 \draw[->] (5.,5) to (11.2,5);
   \draw[-, dashed] (5,8) to (8,8);
   \draw[-, dashed] (8,8) to (8,5);
   \draw[- ] (8,8) to (10,10);
   \node [left] at (5,8) {\scriptsize$1$};
   \node[below] at (8,5) {\scriptsize $1$};
     \node [below] at (11,5) {$A $};
   \node [left] at (5,10) {$C $};  
  \node [above] at (4,8.7) {\small$\rho_{\ell}=\frac{1}{1+C}$};  
  \draw[-] (8,4.9) to (8,5.1);
   \draw[-] (4.9,8) to (5.1,8); 
 \node [below] at (5,5) {\scriptsize$(0,0)$}; ;
     \draw[->] (4.5,9) to [out=-45,in=-135] (6,9);%arrow to LD
      \node [below] at (10,4.6) {\small$\rho_r=\frac{A}{1+A}$}; 
       \draw[->] (10,4.5) to [out =45,in= -45] (10,6) ;%arrow to HD
           \node [above] at (6.5,9) {LD}; 
    \node [below] at (10,6.5) {HD};  
     \node [below] at (6.5,6.5) {MC};  
     \draw[->] (4.5,6) to [out=-45,in=-135] (6,6.08);%arrow to MC
  \node [above] at (4.3,5.75) {\small$\frac{1}{2}$}; 
  \draw[->] (4.5,8) to [out=-45,in=-135] (6,8);
  \node [above] at (4.3,7.7) {\small $\frac{1}{2}$}; 
  \node[below] at (7.9,4.6) {\small$\frac{1}{2}$}; 
 \draw[->] (8,4.4) to [out =45,in= -45] (8,6) ;
 \draw[->] (10,8.5) to [out=-90,in=-35] (8,8);
  \node at (10.1,8.7) {\small $\frac{1}{2}$};  
\end{tikzpicture} 
    \caption{Densities at single sites at the bulk of open ASEP, as stated in Proposition~\ref{prop:bulk density}. The densities are shown in the figure everywhere in the phase diagram, except along the coexistence line $A=C>1$, where a different phenomenon occurs, as described in Proposition \ref{prop:coexistence}.  We also note that the densities coincide with the ``macroscopic densities,'' which pertain to the number of particles in macroscopic intervals of length proportional to the system size $N$; see, for example,  \cite[Figure 2 (a)]{wang2023askey}.}
    \label{fig:density bulk}\end{centering}
\end{figure}
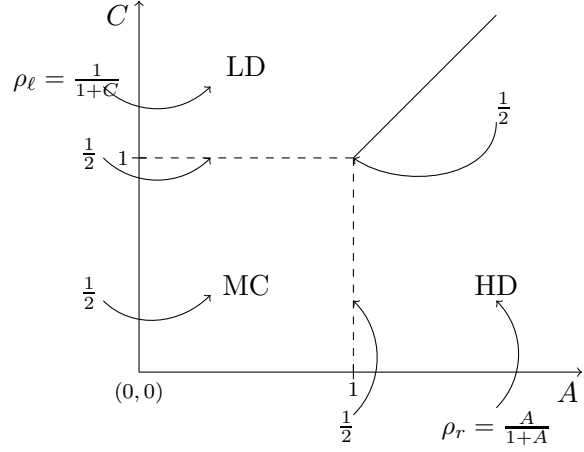

%This result can be regarded as a corollary of \cite[Theorem 3.29]{liggett1999stochastic}. However, the latter only studies the open TASEP case where $q=0$; see Proposition A.2 and Remark A.3 in \cite{nestoridi2023approximating} for a more detailed explanation how the result is extended to $q>0$.

We next provide the densities at the bulk of the system along the coexistence line $A=C>1$.

\begin{proposition}\label{prop:coexistence}
Assume \eqref{eq:conditions qABCD} and $ABCD\notin\{q^{-l}:l\in\NN_0\}$. Consider the coexistence line $A=C>1$. Suppose  $(a_N)_{N=1,2,\dots}$ is an arbitrary sequence of integers such that $1\leq a_N\leq N$ and $a_N/N\rightarrow\theta$ for some $\theta\in[0,1]$. We have 
    \begin{equation}\label{eq:veerwf}
    \lim_{N\rightarrow\infty}\mu_N(\tau_{a_N}=1)= (1-\theta)\frac{1}{1+A}+\theta\frac{A}{1+A}.
    \end{equation}
\end{proposition}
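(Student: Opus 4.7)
The plan is to combine the macroscopic density profile of $\mu_N$ on the coexistence line from \cite{wang2023askey} with a local equilibrium result derived from the local convergence theorem of \cite{bahadoran2006convergence} and coupling arguments with the full-line ASEP. On $A=C>1$ the effective densities $\rho_\ell=1/(1+A)$ and $\rho_r=A/(1+A)$ satisfy $\rho_\ell+\rho_r=1$, so the ASEP current $j(\rho)=(1-q)\rho(1-\rho)$ takes the same value at both and any shock between them has zero Rankine--Hugoniot velocity. The result of \cite{wang2023askey} then identifies the weak limit of the empirical density under $\mu_N$ as the random step function $\rho_\ell\mathbf{1}_{[0,U]}+\rho_r\mathbf{1}_{[U,1]}$ with $U\sim\mathrm{Unif}[0,1]$. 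Taking expectations already yields \eqref{eq:veerwf} in an integrated sense: for any continuous $f:[0,1]\to\mathbb{R}$, $N^{-1}\sum_{i=1}^N f(i/N)\mu_N(\tau_i=1)\to\int_0^1 f(x)\bigl((1-x)\rho_\ell+x\rho_r\bigr)\,dx$.

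To upgrade this to a pointwise statement at $a_N$, I would couple the open ASEP in stationarity with an extended full-line ASEP. Sample $\tau(0)\sim\mu_N$ and extend it to $\Z$ by placing independent Bernoulli$(\rho_\ell)$ particles on $\Z\cap(-\infty,0]$ and independent Bernoulli$(\rho_r)$ particles on $\Z\cap[N+1,\infty)$ to obtain a full-line configuration $\widetilde\tau(0)$; run both dynamics under the basic coupling for a time $t_N=sN$, where $s>0$ is fixed small enough that the finite-speed-of-propagation light cones emanating from the two open boundaries do not reach a window $W_N:=\lBr a_N-\ell_N,a_N+\ell_N\rBr$ with $1\ll\ell_N\ll N$. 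With high probability $\tau(t_N)$ and $\widetilde\tau(t_N)$ then coincide on $W_N$, and by stationarity $\tau(t_N)|_{W_N}$ is again distributed as $\mu_N|_{W_N}$. Applying the local convergence theorem of \cite{bahadoran2006convergence} to $\widetilde\tau$ at the space-time point $(\theta,s)$ identifies the local law of $\widetilde\tau(t_N)$ around $a_N$ as a Bernoulli product measure of density $u(\theta,s)$, where $u$ is the entropy solution of inviscid Burgers' equation with initial profile equal to the macroscopic density of $\widetilde\tau(0)$. Since the initial profile is precisely the shock profile above and shocks of zero velocity are stationary under inviscid Burgers, $u(\theta,s)$ equals $\rho_\ell$ for $\theta<U$ and $\rho_r$ for $\theta>U$, independently of $s$. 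Conditioning on $U$ and then averaging over $U\sim\mathrm{Unif}[0,1]$ yields $\mu_N(\tau_{a_N}=1)\to(1-\theta)\rho_\ell+\theta\rho_r$, proving \eqref{eq:veerwf}.

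The main obstacle is the coupling step: the boundary dynamics of the open ASEP and the full-line ASEP differ at sites $1$ and $N$, so the basic coupling is only an approximation that must be prevented from propagating boundary discrepancies into $W_N$ by time $t_N=sN$. Because $\min(a_N,N-a_N)=\Theta(N)$ for $\theta\in(0,1)$ while information in ASEP travels at finite speed, choosing $s$ small suffices, but the quantitative coupling estimate together with the simultaneous requirement that $s>0$ be fixed (so Bahadoran's theorem applies) needs to be arranged carefully. A secondary issue is that \cite{bahadoran2006convergence} identifies the local limit only at continuity points of $u(\cdot,s)$, excluding the measure-zero event $\theta=U$; this contributes only $o(1)$ upon averaging over $U$. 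The boundary values $\theta\in\{0,1\}$ can be handled by interior approximation together with monotonicity of the single-site density profile and the bulk densities from Proposition \ref{prop:bulk density} taken on approach to the coexistence line from the adjacent high and low density phases.
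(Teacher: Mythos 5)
For $\theta\in(0,1)$ your argument is essentially the paper's: the macroscopic profile of Theorem \ref{thm:HeightFunctionCoexistence}, a basic-coupling/finite-speed-of-propagation comparison with a full-line ASEP, the Bahadoran--Mountford local convergence theorem, and stationarity of $\mu_N$. The only real difference is that the paper conditions on the shock location being at distance at least $\varepsilon$ from $\theta$ (events $A_\varepsilon,B_\varepsilon$ under a Skorokhod representation), so that the local initial profile is \emph{constant} and the constant-density version of Theorem \ref{thm:LocalConvergence} applied over a time $\varepsilon N/4$ suffices, whereas you invoke the general entropy-solution/local-equilibrium form at a fixed macroscopic time $s$ and use that a zero-speed shock is stationary under Burgers. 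That variant is workable, but note that ``conditioning on $U$'' requires exactly the Skorokhod-type device (since $U$ exists only in the limit and the local-convergence hypotheses concern almost surely convergent initial profiles), and stationarity must be used for the unconditional law of $\tau(t_N)$ before intersecting with the $U$-events, as in the paper; also the conclusion only needs one-point probabilities, not full local Bernoulli structure.

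The genuine gap is in the boundary cases $\theta\in\{0,1\}$, which are part of the statement. You invoke ``monotonicity of the single-site density profile'' without proof, and this is not an off-the-shelf fact: in the paper it is precisely the output of the light-particle matrix product relation, Proposition \ref{prop:simple relation}, which shows that all differences $\mu_{N+1}(\tau_k=1)-\mu_{N+1}(\tau_{l+1}=1)$, $k<l$, equal a fixed constant $\hZ_N/Z_{N+1}$ times the positive probabilities $\mt_{N,1}(k\le\lc_1\le l)$, hence share a common sign, which the interior case $\theta\in(0,1)$ then pins down. Your substitute --- approaching the coexistence line from the adjacent phases and using Proposition \ref{prop:bulk density} via the stochastic sandwiching of Lemma \ref{lem:sandwiching} --- only yields the easy one-sided bound: at $\theta=0$, perturbing into the low density phase gives a stochastically \emph{smaller} measure and hence $\liminf_{N}\mu_N(\tau_{a_N}=1)\ge 1/(1+A)$ (alternatively this follows from $\ld=1/(1+A)$ in Proposition \ref{prop:boundary density}), but every perturbation producing a stochastically \emph{larger} measure lands in the high density phase, whose densities exceed $1/2$, so it cannot give the needed upper bound $\limsup_N\mu_N(\tau_{a_N}=1)\le 1/(1+A)$. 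Without the site-index monotonicity (or some substitute), your ``interior approximation'' at $\theta\in\{0,1\}$ does not close; the paper closes it by combining that monotonicity with $\ld=1/(1+A)$ and the already-proved interior case at a small $\epsilon$, then letting $\epsilon\to0$.
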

This proposition will be proved in Section \ref{sec:proof of coexistence line density}, where we use some coupling arguments, a local convergence result by Bahadoran and Mountford \cite{bahadoran2006convergence} as well as a result on the macroscopic density profile of open ASEP from \cite{wang2023askey}.
We mention that the density profile \eqref{eq:veerwf} on the coexistence line being linear with respect to location $\theta$ has been observed in the physics literature \cite{derrida1993exact,schutz1993phase,essler1996representations,mallick1997finite,derrida2002exact,derrida2003exact}. Our result confirms the postulation in these works.

The following result concerns the particle density of the open ASEP at the open boundaries.
\begin{proposition}\label{prop:boundary density}
  Assume \eqref{eq:conditions qABCD}. As $N\ra\infty$, the probabilities $\m_N(\tau_1=1)$ and $\m_N(\tau_N=1)$ converge respectively to $\ld:=\ld(A,B,C,D)$ and $\rd:=\rd(A,B,C,D)$ given by the following formulas:
\begin{enumerate}
 \item [$\bullet$] In the maximal current phase $A\leq1$ and $C\leq1$, we have 
 \[\ld=\frac{3-C-D-CD}{4(1-CD)},\quad\rd=\frac{1 +A+B-3AB}{4(1-AB)}.\]
 \item [$\bullet$] In the high density phase $A\geq 1$ and $A\geq C$, we have 
 \[\ld=\frac{A^2+A+1-ACD-AC-AD}{(A+1)^2(1-CD)},\quad\rd=\frac{A}{1+A}.\]
\item [$\bullet$] In the low density phase $C\geq 1$ and $C\geq A$, we have 
\[\ld=\frac{1}{1+C},\quad\rd=\frac{AC+BC-ABC^2-AB+C-ABC}{(C+1)^2(1-AB)}.\]
%\[\ld=\frac{1}{1+C},\quad\rd=\frac{C}{(C+1)^2(1-AB)}\lb A+B-ABC-\frac{AB}{C}+1-AB\rb.\]
\end{enumerate}
%\be 
%    \ld=\begin{cases}
%   \frac{A^2+A+1-ACD-AC-AD}{(A+1)^2(1-CD)}  & \quad A\geq C, A\geq 1 \quad\text{ (high density)}, \\
%   \frac{1}{1+C} & \quad C\geq A, C\geq 1 \quad\text{ (low density)}, \\
%   \frac{3-C-D-CD}{4(1-CD)} & \quad A\leq 1, C\leq 1 \quad\text{ (maximal current)}.
%\end{cases}
%    \ee  
%    \be 
%    \rd=\begin{cases}
%   \frac{A}{1+A}  & \quad A\geq C, A\geq 1 \quad\text{ (high density)}, \\
%   \frac{C}{(C+1)^2(1-AB)}\lb A+B-ABC-\frac{AB}{C}+1-AB\rb & \quad C\geq A, C\geq 1 \quad\text{ (low density)}, \\
%   \frac{1 +A+B-3AB}{4(1-AB)} & \quad A\leq 1, C\leq 1 \quad\text{ (maximal current)}.
%\end{cases}
%    \ee 
We refer to $\ld$ and $\rd$ as the limiting densities at the left and right boundaries, respectively.
\end{proposition}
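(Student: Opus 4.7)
The plan is to reduce the proposition to the asymptotic analysis of the stationary current $J_N$ of the standard open ASEP by means of the exact flux-balance identities at the boundaries. By stationarity, the mean current across every bond takes a common value $J_N$, and matching this value against the boundary jump rates yields the exact identities
\[
\mu_N(\tau_1 = 1) \;=\; \frac{\alpha - J_N}{\alpha + \gamma}, \qquad \mu_N(\tau_N = 1) \;=\; \frac{J_N + \delta}{\beta + \delta},
\]
which are well-defined since $\alpha, \beta > 0$. In this way the proposition reduces to two subtasks: (i) establishing convergence of $J_N$ in each phase, and (ii) verifying that the resulting limits match the stated formulas after a change of variables.

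For step (i), the plan is to invoke the matrix product ansatz of \cite{derrida1993exact,uchiyama2008two} to write $J_N = (1-q) Z_{N-1}/Z_N$, where $Z_N = \langle W | (D+E)^N | V \rangle$ is the partition function, and then to use the integral representation of $Z_N$ against the Askey--Wilson signed measure developed in \cite{uchiyama2004asymmetric,bryc2010askey,bryc2017asymmetric,wang2023askey}. A Laplace-type asymptotic analysis of the moments splits into three regimes: in MC the absolutely continuous part of the measure (supported near the spectral endpoint $x = 1$) contributes via a saddle-point, giving $J_N \to (1-q)/4$; in HD (respectively LD), an atom outside the continuous support located at $x = (A + A^{-1})/2$ (respectively $(C + C^{-1})/2$) dominates, giving $J_N \to (1-q)A/(1+A)^2$ (respectively $(1-q)C/(1+C)^2$). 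These current asymptotics are classical and are already available in the cited references.

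For step (ii), the plan is to recover the stated formulas by direct algebraic manipulation using the relations implied by \eqref{eq:defining ABCD}. Since $C$ and $D$ are the two roots of the quadratic $\alpha x^2 - (1 - q - \alpha + \gamma) x - \gamma = 0$, one has $CD = -\gamma/\alpha$ and $C + D = (1 - q - \alpha + \gamma)/\alpha$, which give in particular $\alpha + \gamma = \alpha(1 - CD)$; the analogous identities hold for $A, B$ with $\alpha, \gamma$ replaced by $\beta, \delta$. A short computation using these relations verifies, for example in MC, that
\[
\frac{\alpha - (1-q)/4}{\alpha + \gamma} \;=\; \frac{3 - C - D - CD}{4(1 - CD)},
\]
and in HD that
\[
\frac{\alpha - (1-q)A/(1+A)^2}{\alpha + \gamma} \;=\; \frac{A^2 + A + 1 - ACD - AC - AD}{(A+1)^2(1 - CD)},
\]
while the identity $\beta A - \delta = (1-q)A/(1+A)$, obtained by squaring the defining equation $A = \phi_+(\beta, \delta)$, yields $\sigma_r = A/(1+A)$ for the HD right boundary. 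The LD formulas follow by the $\alpha \leftrightarrow \beta$, $\gamma \leftrightarrow \delta$ symmetry between the left and right boundaries.

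The main obstacle is the asymptotic analysis of $Z_N$ in step (i), which must separately treat the distinct dominant contributions in each of the three phases and handle the subtlety that in the shock region $AC > 1$ the Askey--Wilson measure becomes a genuine signed measure with additional atoms. What is needed here, however, is only the leading order of the ratio $Z_{N-1}/Z_N$, which is robust under these subtleties and is essentially classical. Given this input, the remainder of the proof is a routine algebraic reformulation of the classical current asymptotics as microscopic boundary densities at the endpoints of the lattice.
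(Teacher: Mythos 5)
Your route is genuinely different from the paper's and, at its core, sound. The paper proves the proposition by identifying $\ld$ as the density at site $1$ of the semi-infinite limit measure, characterized via Askey--Wilson signed measures (Lemma \ref{lem:characterization of ld}), and then computing first moments of those measures (Lemma \ref{lem:AW moment}); you instead use the exact stationary flux-balance identities $\mu_N(\tau_1=1)=(\alpha-J_N)/(\alpha+\gamma)$ and $\mu_N(\tau_N=1)=(J_N+\delta)/(\beta+\delta)$, which are correct, and reduce everything to the classical limit of the current $J_N$ in each phase. Your algebraic dictionary is also correct: $CD=-\gamma/\alpha$, $C+D=(1-q-\alpha+\gamma)/\alpha$, hence $\alpha+\gamma=\alpha(1-CD)$ and $\alpha(1+C)(1+D)=1-q$ (and the analogues for $A,B,\beta,\delta$), and with these all six displayed formulas do come out of the quoted current limits $(1-q)/4$, $(1-q)A/(1+A)^2$, $(1-q)C/(1+C)^2$; I checked the cases you did not write out. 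What your approach buys is that it bypasses the semi-infinite limit measure and the moment computation entirely, needing only the ratio $Z_{N-1}/Z_N$; a small normalization quibble is that with the paper's convention $\D\E-q\E\D=\D+\E$ one has $J_N=Z_{N-1}/Z_N$ rather than $(1-q)Z_{N-1}/Z_N$, though this does not affect the limiting values you use.

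The gap is in the coverage of the parameter set. The proposition is asserted for all of \eqref{eq:conditions qABCD}, including the phase boundaries $A=1$, $C=1$, the coexistence line $A=C>1$, the shock region, and the exceptional set $ABCD\in\{q^{-l}:l\in\NN_0\}$, where $Z_N$ may vanish and the MPA normalization (hence the formula $J_N=Z_{N-1}/Z_N$) degenerates. The ``classical'' current asymptotics you invoke are established in the cited references under genericity assumptions (fan region in \cite{uchiyama2004asymmetric,bryc2010askey,bryc2017asymmetric}, and $ABCD\notin\{q^{-l}\}$ for the signed-measure analysis of the shock region in \cite{wang2023askey}), and typically in the interior of each phase; saying the ratio $Z_{N-1}/Z_N$ is ``robust under these subtleties'' is not a proof on these curves. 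This is exactly the point the paper handles with a separate argument: the limiting formulas are continuous in $(A,B,C,D)$, and by Lemma \ref{lem:sandwiching} the boundary density $\mu_N(\tau_1=1)$ is monotone in the boundary rates, so the exceptional cases follow by sandwiching between nearby generic parameters. Your proof needs the same (or an equivalent) patch --- either apply the sandwiching argument directly to $\mu_N(\tau_1=1)$ and $\mu_N(\tau_N=1)$, or verify that the current convergence holds uniformly up to and on the exceptional curves --- before it covers the full statement.
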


The above result will be proved in Section \ref{subsec: 3.1}. 
\begin{remark}\label{rmk:left and right density}
Proposition \ref{prop:boundary density} provides the limiting densities $\ld$ and $\rd$ everywhere on the phase diagram. As can be observed, these limiting densities are continuous functions involving $A$, $B$, $C$, and $D$ on the entire paremeter space. Figure \ref{fig:boundary densities} compares the limiting densities at the boundaries, $\ld$ and $\rd$, with the densities at the bulk of the system. The latter are provided by Proposition \ref{prop:bulk density} and Figure \ref{fig:density bulk}. It is worth noting that $\ld = \rd$ holds if and only if $AC=1$. Along this curve $AC=1$, it is well-known that the stationary measure $\mu_N$ is a Bernoulli product measure with probability $\rho_{\ell}=\rho_r$, see for example \cite[Appendix A]{enaud2004large}.
%As  mentioned in Section \ref{subsec:def of the model}, the quantities $\rho_{\ell}=1/(1+C)$ and $\rho_r=A/(1+A)$ have physical interpretations as the ``effective densities'' near the left and right boundaries, see for example the review \cite[Section 6.2]{corwin2022some}. Figure \ref{fig:boundary densities} compares the limiting densities $\ld$ and $\rd$ with the ``effective densities'' and with $1/2$. In particular, $\ld=\rd$ if and only if $AC=1$. Along $AC=1$, it is well-known that the stationary measure $\mu_N$ is a Bernoulli product measure with density $\rho_{\ell}=\rho_r$, see for example \cite{enaud2004large}.
\end{remark}
\begin{figure}%[ht]
    \centering
    \begin{tikzpicture}[scale=0.95]
 \draw[scale = 1,domain=8:11,smooth,variable=\x,dotted,thick] plot ({\x},{1/((\x-7)*1/3+2/3)*3+5});
 \draw[->] (5,5) to (5,10.2);
 \draw[->] (5.,5) to (11,5);
   \draw[dashed] (5,8) to (8,8);
  \draw[dashed] (8,8) to (8,5);
   \draw[dashed] (8,8) to (10.2,10.2);
   \node [left] at (5,8) {\small$1$};
   \node[below] at (8,5) {\small $1$};
     \node [below] at (11,5) {$A$};
   \node [left] at (5,10) {$C$};
 \draw[dashed] (8,4.9) to (8,5.1);
  \draw[dashed] (4.9,8) to (5.1,8);
 \node [below] at (5,5) {\scriptsize$(0,0)$};   
 \node at (6.6,6.6) {\small $\ld>1/2$};
 \node at (8,9.3) {\small $\ld=\rho_{\ell}$};
 \node at (9.8,6) {\small $\ld>\rho_r$};
 \node at (9.8,8.5) {\small $\ld<\rho_r$};
 \node at (8,3.7) {(a)};
 \end{tikzpicture}\quad\quad\quad\quad
 \begin{tikzpicture}[scale=0.95]
 \draw[scale = 1,domain=6.7:8,smooth,variable=\x,dotted,thick] plot ({\x},{1/((\x-7)*1/3+2/3)*3+5});
 \draw[->] (5,5) to (5,10.2);
 \draw[->] (5.,5) to (11,5);
   \draw[dashed] (5,8) to (8,8);
  \draw[dashed] (8,8) to (8,5);
   \draw[dashed] (8,8) to (10.2,10.2);
   \node [left] at (5,8) {\scriptsize$1$};
   \node[below] at (8,5) {\scriptsize $1$};
     \node [below] at (11,5) {$A$};
   \node [left] at (5,10) {$C$};
 \draw[dashed] (8,4.9) to (8,5.1);
  \draw[dashed] (4.9,8) to (5.1,8);
 \node [below] at (5,5) {\scriptsize$(0,0)$};  
 \node at (10,8) {\small $\rd=\rho_r$};
 \node at (6.6,6.6) {\small $\rd<1/2$};
 \node at (6,8.7) {\small $\rd<\rho_{\ell}$};
 \node at (8,10) {\small $\rd>\rho_{\ell}$};
% \node [above] at (3.4,7.9) {\small $\rd=\frac{1}{2}$}; 
%  \node[left] at (8.2,4.1) {\small $\rd=\frac{1}{2}$}; 
%  \draw[->] (8,4.1) to [out =45,in= -45] (8,6);
%  \draw[->] (4.1,8) to [out=-45,in=-135] (6,8);
\node at (8,3.7) {(b)};
 \end{tikzpicture} 
    \caption{A comparison of the limiting densities $\ld$ and $\rd$ at the left and right boundaries (Proposition \ref{prop:boundary density}) with the limiting densities in the bulk of the system (Proposition \ref{prop:bulk density} and Figure \ref{fig:density bulk}). 
    All the inequalities in this figure can be proved by simple computation. 
    Along the boundary curves of the regions in the phase diagrams, the comparison is not shown in the figures but can be easily observed using the fact that $\ld$ and $\rd$ are continuous functions of $A$, $B$, $C$ and $D$. For example, in part (a), we have $\ld=\rho_{\ell}$ along the boundary curves $\{(A,C):A<1,C=1\}$ and $\{(A,C):A=C\geq1\}$ of the low density phase; $\ld=1/2$ on $\{(A,C):A=1,C<1\}$; and $\ld=\rho_r$ on $\{(A,C):AC=1,A>1\}$. In part (b), the comparison of $\rd$ with the bulk densities along boundary curves can be observed similarly. } 
    \label{fig:boundary densities}
\end{figure}
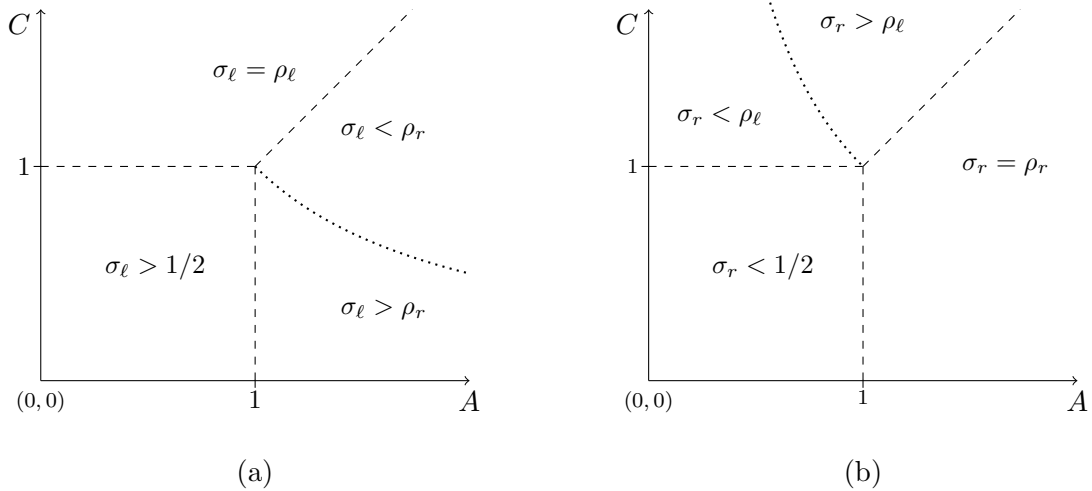

%\begin{remark} We compare the limiting density $\rd$ at the right boundary with the bulk densities.
%On the high density phase $A\geq C$, $A\geq 1$ we have $\rd=\frac{A}{1+A}$.
%    On the maximal current phase $A<1$, $C\leq 1$ we have $\rd<\frac{1}{2}$. On a boundary of maximal current phase $A=1$, $C\leq 1$ we have $\rd=\frac{1}{2}$. On the fan region part $AC<1$ of the low density phase $C\geq A$, $C\geq 1$, we have $\rd>\frac{1}{C+1}$. On the shock region part $AC>1$ of the low density phase $C\geq A$, $C\geq 1$, we have $\rd<\frac{1}{C+1}$. On $AC=1$ and the low density phase, we have $\rd=\frac{1}{C+1}$.
%
%    We have $\ld\neq\rd$ everywhere except for $AC=1$.
%\end{remark} 

\subsection{Related works} \label{subsec:related works}
One of the most important tools for studying the stationary measures of simple exclusion processes with open boundaries is the matrix product ansatz (MPA). This method was introduced by Liggett \cite{liggett1975ergodic} in an implicit form and by Derrida et al. \cite{derrida1993exact} for the open TASEP; see also \cite{blythe2000exact} for an extension to the open ASEP, and \cite{blythe2007nonequilibrium,corwin2022some} for introductory surveys. Over the past five decades, the matrix product ansatz has led to numerous studies of the stationary measure of the open ASEP, covering a wide range of asymptotic behaviors, including phase diagrams, density profiles, correlation functions, limit fluctuations, and large deviations. For a non-exhaustive list of related papers, see \cite{derrida1993exact,schutz1993phase,essler1996representations,mallick1997finite,derrida2002exact,derrida2003exact,sasamoto2000density,uchiyama2004asymmetric,uchiyama2005correlation} and the references in \cite{blythe2007nonequilibrium} for physics works, and \cite{bryc2017asymmetric,bryc2019limit,wang2023askey} for mathematical works. It is known from \cite{uchiyama2004asymmetric} (see also \cite{corteel2011tableaux}) that the matrix product ansatz is closely related to the so-called Askey--Wilson orthogonal polynomials introduced by Askey and Wilson \cite{askey1985some}. More recently, this relationship has led to a characterization of the stationary measure of open ASEP in terms of Askey--Wilson (signed) measures by Bryc and Wesołowski \cite{bryc2010askey,bryc2017asymmetric} in the `fan region' and was later extended by Wang et al. \cite{wang2023askey} to the `shock region'. This approach not only enables rigorous derivations of asymptotic behaviours of the stationary measure of the  open ASEP, but has also led to studies of the stationary measures of the open KPZ equation \cite{corwin2024stationary,bryc2023markov,le2022steady} and the open KPZ fixed point \cite{bryc2023asymmetric,wang2024asymmetric}.
Furthermore, we mention that very recently a new method has been developed to characterize the stationary measures of certain integrable models with two open boundaries, in terms of marginal distributions of the so-called ``two-layer Gibbs measures'', see \cite{barraquand2024stationary,bryc2024twoLayerTASEP,bryc2024stationary,bryc2024limitCoex}.

While the stationary measure for the open ASEP with a single species of particles is relatively well understood, the corresponding questions for open ASEPs with two or more species of particles remain largely unresolved. These systems exhibit richer physical properties, such as spontaneous symmetry breaking \cite{evans1995asymmetric} and condensation \cite{arndt1999spontaneous,evans2000phase}. Moreover, various choices of boundary conditions exist for multi-species models. The integrable boundaries were classified for two-species open ASEPs in \cite{crampe2015open} and for more species models in \cite{crampe2016integrable}; see also \cite{crampe2016matrix,finn2018matrix} for the matrix product ansatz of such integrable models. A classical choice of the boundary condition is the ``semi-permeable" one discussed in the present article, which forbids the light particles from entering or exiting the system. The phase transitions and other stationary properties for TASEP and ASEP versions of such semi-permeable two-species models were studied in \cite{arita2006exact,arita2006phase,uchiyama2008two,ayyer2009two,cantini2017asymmetric}. It is also known from \cite{corteel2017combinatorics,corteel2018macdonald,cantini2017asymmetric} that the stationary measures are related to the so-called Macdonald–Koornwinder polynomials \cite{koornwinder1992askey}, which can be considered as multivariate generalizations of Askey–Wilson polynomials. The ``left-permeable" two-species open ASEP, on the other hand, allows light particles to enter and exit at the left boundary and is studied in \cite{ayyer2018matrix}. For open ASEP with an arbitrary number of species, see, for example, \cite{ayyer2017exact,roy2021phase,ayyer2019phase} for studies of the phase diagrams.

The role of light particles (often referred to as second class particles) is closely related to physical properties of the asymmetric simple exclusion process. For the ASEP on the integers, it is shown in \cite{BS:ExactConnections} that the moments of the location of a second class particle are linked to the moments of the current of the ASEP, i.e.\ the number of particles passing through a given site over time; see also \cite{BS:OrderCurrent,FF:CurrentFluctuations}. However, determining the law of a second class particle remains challenging; see \cite{A:CurrentFluctuations,ACG:ASEPspeed,landon2023tail} for recent progress on the fluctuations and convergence of second class particles for stationary and step initial conditions. 
Only in the special case of the totally asymmetric simple exclusion process, where $q=0$, significant progress could be achieved for general initial conditions due to an alternative representation as a last passage percolation model, where the motion of a light particle can be expressed as a competition interface \cite{FP:CompetitionInterface,GRS:GeodesicCompetition}. 
Another recent line of research concerns the open ASEP started outside of the stationary distribution.  In this case, mixing times are a standard way to characterize the speed of convergence. We refer to \cite{BN:CutoffASEP,LL:CutoffASEP,LL:CutoffWeakly} for recent results on the mixing time of the asymmetric simple exclusion process on a closed segment, and \cite{elboim2024mixing,gantert2023mixing,he2023limit,schmid2022mixing,schmid2023mixing} for the open ASEP.

% \subsection{Methods} 

% Part 1: Askey Wilson for $r=1$

% Part 2: Moderate deviation approach for multiple light particles. 

% Part 3: Coexistence line where we use both methods combined. 

\subsection{Outline of the paper}

This paper is structured as follows. In Section~\ref{sec:MPA}, we introduce the matrix product ansatz and provide the relation between the location of a single light particle under the stationary measure $\mt_{N,r}$ and the densities of the stationary measure $\mu_N$ of the standard open ASEP without light particles. The respective density results for the standard open ASEP are derived in Section~\ref{sec:proof of boundary density}, allowing us to conclude Theorem~\ref{thm:main thm} and Theorem~\ref{thm:mainHighLow} for $r=1$. Section \ref{sec:Concentration} concerns the locations of light  particles in the fan region of the high and low density phases, as well as mixing times, proving Theorem~\ref{thm:mainHighLow} for $r \geq 1$ and Theorem~\ref{thm:MainMixingTime}. In Section~\ref{sec:proof of coexistence line density}, we characterize the density of open ASEP on the coexistence line and prove Theorem~\ref{thm:mainCoexistence}.

\subsection*{Acknowledgements}
Z.Y.~was partially supported by Ivan Corwin’s NSF grant DMS-1811143 as well as the Fernholz Foundation’s `Summer Minerva Fellows' program. We thank the Simons Center for Geometry and Physics where this work was initiated at the workshop `The asymmetric simple exclusion process'.

\section{A simple relation from the matrix product ansatz}
\label{sec:MPA}
In this section, we use the matrix product ansatz to derive a simple and surprising relation between the stationary measure of the standard open ASEP and the stationary measure of the open ASEP with a single light particle. Combined with the results on the densities of the open ASEP in Section \ref{subsec:density of standard open ASEP}, this relation will eventually allow us to prove our main theorems for the case of the open ASEP with a single light particle.

We denote the unique stationary measure of the standard open ASEP by $\mu_N$. The following result relates the stationary measure $\mt_{N,1}$ of the open ASEP with a light particle to $\mu_{N+1}$.

\begin{proposition}\label{prop:simple relation}
Assume \eqref{eq:conditions qABCD}, $ABCD\notin\{q^{-l}:l\in\NN_0\}$ and $AC\neq1$. For any $1\leq k\leq l\leq N$, 
\be\label{eq:simple relation}
\mt_{N,1}\lb k\leq\lc_1\leq l\rb=\frac{\m_{N+1}(\tau_k=1)-\m_{N+1}(\tau_{l+1}=1)}{\m_{N+1}(\tau_1=1)-\m_{N+1}(\tau_{N+1}=1)}.
\ee
\end{proposition}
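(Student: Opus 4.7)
The plan is to use the matrix product ansatz (MPA) for both $\mu_N$ and $\mt_{N,1}$ and reduce the identity to a short algebraic telescoping. Recall from \cite{derrida1993exact,uchiyama2004asymmetric} that $\mu_N(\tau) = Z_N^{-1}\ll W|\prod_{i=1}^N X_{\tau_i}|V\rr$ with $X_0 = E$, $X_1 = D$ and operators $D,E$, vectors $\ll W|, |V\rr$ satisfying
\[
DE - qED = D + E, \qquad \ll W|(\alpha E - \gamma D) = \ll W|, \qquad (\beta D - \delta E)|V\rr = |V\rr .
\]
The Uchiyama two-species ansatz \cite{uchiyama2008two} extends this using the \emph{same} $\ll W|, |V\rr, D, E$ together with a new matrix $X_{\ha} = A$ obeying the bulk commutation relations $DA - qAD = A$ and $AE - qEA = A$; no boundary relation involves $A$, since light particles do not interact with the reservoirs. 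The hypothesis $ABCD \notin \{q^{-l}\}$ guarantees that a suitable non-degenerate representation exists. Writing $C := D + E$ and summing over the possibilities at the non-light sites yields
\[
\mu_{N+1}(\tau_j = 1) = \frac{\ll W|C^{j-1} D C^{N+1-j}|V\rr}{Z_{N+1}}, \qquad \mt_{N,1}(\lc_1 = j) = \frac{\ll W|C^{j-1} A C^{N-j}|V\rr}{\hat{Z}_{N,1}}.
\]

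The crux of the argument is the observation that the element $M := DC - CD = DE - ED$ of the subalgebra generated by $D, E$ satisfies the \emph{same} commutation relations as $A$, namely $DM - qMD = M$ and $ME - qEM = M$. This is a direct algebraic check after rewriting $M = C + (q-1)ED$ via $DE = qED + C$ and applying that relation twice. Since stationarity of the MPA for $\mt_{N,1}$ is a formal consequence of precisely these commutation relations together with the boundary conditions on $\ll W|$ and $|V\rr$ (which do not involve $A$), substituting $A \mapsto M$ in the matrix product gives another stationary \emph{signed} measure $\tilde\nu$ on $\widehat{\Omega}_{N,1}$. Because the irreducible Markov chain on $\widehat{\Omega}_{N,1}$ has a one-dimensional space of stationary signed measures, $\tilde\nu$ is a scalar multiple of $\mt_{N,1}$, so there exists a constant $c \neq 0$, independent of $j$, with $\ll W|C^{j-1} A C^{N-j}|V\rr = c\, \ll W|C^{j-1} M C^{N-j}|V\rr$ for every $1 \leq j \leq N$.

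Plugging $M = DC - CD$ and summing over $j$ telescopes:
\[
\sum_{j=k}^{l}\ll W|C^{j-1} M C^{N-j}|V\rr = \ll W|C^{k-1} D C^{N+1-k}|V\rr - \ll W|C^{l} D C^{N-l}|V\rr = Z_{N+1}\bigl[\mu_{N+1}(\tau_k=1) - \mu_{N+1}(\tau_{l+1}=1)\bigr].
\]
Dividing this identity for general $(k,l)$ by the same identity specialized to $(k,l) = (1,N)$, and using $\mt_{N,1}(1 \leq \lc_1 \leq N) = 1$ to eliminate $c\,Z_{N+1}/\hat{Z}_{N,1}$, yields the claimed formula. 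The hypothesis $AC \neq 1$ enters precisely here to guarantee that the denominator $\mu_{N+1}(\tau_1=1) - \mu_{N+1}(\tau_{N+1}=1)$ is nonzero: along $AC = 1$ the stationary measure $\mu_N$ is a Bernoulli product with constant density (see \cite[Appendix A]{enaud2004large}), so the expression would be $0/0$.

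The main obstacle I expect is making the replacement $A \mapsto M$ rigorous. Since $M$ is a commutator it generally has entries of both signs, so $\tilde\nu$ must be handled as a signed measure, and one must separately verify that its total mass is nonzero (which is exactly where $AC \neq 1$ is used). Two natural routes are: (i) re-run the classical MPA stationarity proof with $M$ substituted for $A$, then invoke the one-dimensional kernel of the generator on the finite irreducible state space $\widehat{\Omega}_{N,1}$; or (ii) work inside an explicit Uchiyama-type representation where one identifies $A$ with a scalar multiple of $M$ as an honest operator identity. Either route reduces the problem to the short commutation check above.
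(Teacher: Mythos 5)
Your route is in essence the paper's own: the telescoping identity $\m_{N+1}(\tau_i=1)-\m_{N+1}(\tau_{i+1}=1)=Z_{N+1}^{-1}\ll W|(\D+\E)^{i-1}(\D\E-\E\D)(\D+\E)^{N-i}|V\rr$, the two-species matrix product ansatz with the light-particle matrix equal to (a multiple of) $\D\E-\E\D$, summation over $j\in\lBr k,l\rBr$, and normalization via the case $(k,l)=(1,N)$. Your detour through an abstract $\A$ satisfying only the commutation relations, followed by a kernel-of-$L^*$ argument to identify its weights with those of $M=\D\E-\E\D$ up to a scalar, is redundant: one may simply invoke the ansatz of Uchiyama with $\A=\D\E-\E\D$ from the start, as the paper does in Proposition \ref{prop:MPA two}.

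The genuine gap is the nonvanishing of the normalization, i.e.\ of $\hZ_N=\sum_{j}\ll W|(\D+\E)^{j-1}(\D\E-\E\D)(\D+\E)^{N-j}|V\rr=Z_{N+1}\bigl(\m_{N+1}(\tau_1=1)-\m_{N+1}(\tau_{N+1}=1)\bigr)$. You flag this as ``the main obstacle'' but never close it, and the justification you offer, namely that along $AC=1$ the stationary measure is a Bernoulli product so the denominator vanishes there, argues in the wrong direction: it shows $AC\neq1$ is necessary, not that $AC\neq1$ together with $ABCD\notin\{q^{-l}:l\in\NN_0\}$ is sufficient for $\hZ_N\neq0$ at finite $N$. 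Neither of your proposed remedies supplies this: the one-dimensional-kernel argument only gives that the signed measure built from $M$ is a scalar multiple of $\mt_{N,1}$, and that scalar could a priori be $0$ (in which case every matrix-product weight vanishes and the ansatz yields nothing), while an operator identity relating $\A$ and $M$ says nothing about the total mass either. The paper closes exactly this point in Lemma \ref{lem:deno0}: if $\hZ_N=0$, irreducibility and Perron--Frobenius force all weights $f_N(\tau)$ to vanish; taking linear combinations and applying $\ll W|(\alpha\E-\gamma\D)=\ll W|$ repeatedly reduces this to $\ll W|\D\E-\E\D|V\rr=0$, which in the explicit representation \eqref{eq:entries}--\eqref{eq:W and V} reads $\varepsilon_1\beta_0=\varphi_1\alpha_0$, i.e.\ $(1-ABCD)\varepsilon_1\beta_0=0$, contradicting $ABCD\neq1$, $\beta_0\neq0$ and $\varepsilon_1\neq0$ (this last step is precisely where $AC\neq1$ enters). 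You also need $Z_{N+1}\neq0$, which rests on $ABCD\notin\{q^{-l}:l\in\NN_0\}$ as in Remark \ref{rmk:denomenator 0}; without both nonvanishing statements the final division defining \eqref{eq:simple relation} is not justified.
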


This proposition will be shown at the end of this section as a corollary of the matrix product ansatz for both the stationary measures $\mt_{N,1}$ and $\mu_{N+1}$. We remark that this result is interesting to us because it provides a simple relation between the stationary measures of two  probability systems, and yet we cannot provide a direct probabilistic proof; one must go through the algebraic machinery of the matrix product ansatz.

The next result introduces the matrix product ansatz for the standard open ASEP.

\begin{proposition}[c.f. \cite{derrida1993exact}]\label{prop:MPA}
    Assume \eqref{eq:conditions open ASEP} and that there are matrices $\D$, $\E$, a row vector $\ll W|$ and a column vector $|V\rr$ with the same (possibly infinite) dimension, satisfying  
\be\label{eq:DEHP algebra} 
        \D\E-q\E\D=\D+\E, \quad
        \ll W|(\alpha\E-\gamma\D)=\ll W|, \quad
        (\beta\D-\delta\E)|V\rr=|V\rr 
 \ee
    (which is commonly referred to as the $\dehp$ algebra). Then we have 
    \be \label{eq:MPA open ASEP}
\mu_N\lb\tau_1,\dots,\tau_N\rb=\frac{\ll W|\prod_{i=1}^N\lb\one(\tau_i=0)\E+\one(\tau_i=1)\D\rb|V\rr}{\ll W|(\E+\D)^N|V\rr},
\ee 
for any $(\tau_1,\dots,\tau_N)\in\{0,1\}^N$, assuming that the denominator $Z_N:=\ll W|(\E+\D)^N|V\rr$ is nonzero.
%When $ABCD\notin\{q^{-l}:l\in\NN\}$, the denominator $Z_N:=\ll W|(\E+\D)^N|V\rr$ is nonzero.
\end{proposition}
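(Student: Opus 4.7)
The plan is to verify directly that $\pi_N(\tau) := f_N(\tau)/Z_N$, with $f_N(\tau) := \ll W|\prod_{i=1}^N v(\tau_i)|V\rr$ and $v(0)=\E$, $v(1)=\D$, satisfies $\mathcal{L}^* f_N(\tau) = 0$ for every $\tau \in \{0,1\}^N$, where $\mathcal{L}^*$ denotes the adjoint of the open ASEP generator. Since the process is irreducible on $\{0,1\}^N$ under $\alpha,\beta>0$ and one has $\sum_\tau f_N(\tau) = \ll W|(\E+\D)^N|V\rr = Z_N$ automatically, this will identify $\pi_N$ with the unique stationary measure $\mu_N$.

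The first step is to decompose $\mathcal{L}^* = \mathcal{L}_{\mathrm{left}}^* + \mathcal{L}_{\mathrm{right}}^* + \sum_{i=1}^{N-1}\mathcal{L}_{i,i+1}^*$ and compute each local piece in matrix-product form. For the bulk bond $(i,i+1)$, the pair of swap transitions contributes $(\tau_{i+1}-\tau_i)\ll W|v(\tau_1)\cdots v(\tau_{i-1})(\D\E - q\E\D) v(\tau_{i+2})\cdots v(\tau_N)|V\rr$, which the DEHP relation $\D\E - q\E\D = \D + \E =: X$ simplifies to $(\tau_{i+1}-\tau_i)\ll W|v(\tau_1)\cdots v(\tau_{i-1}) X v(\tau_{i+2})\cdots v(\tau_N)|V\rr$. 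For the left boundary, the relation $\ll W|(\alpha\E-\gamma\D) = \ll W|$ reduces the contribution to $(2\tau_1-1)A_1$, with $A_j := \ll W|v(\tau_1)\cdots v(\tau_{j-1})v(\tau_{j+1})\cdots v(\tau_N)|V\rr$ denoting the $(N-1)$-factor matrix product with site $j$ omitted; symmetrically, the right boundary contributes $(1-2\tau_N)A_N$ via $(\beta\D-\delta\E)|V\rr = |V\rr$.

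The decisive step is the algebraic identity
\[
(\tau_{i+1}-\tau_i)\ll W|v(\tau_1)\cdots v(\tau_{i-1}) X v(\tau_{i+2})\cdots v(\tau_N)|V\rr = (2\tau_{i+1}-1)A_i + (1-2\tau_i)A_{i+1},
\]
which I plan to verify by splitting $X = \D+\E$ and using the scalar identities $\tau\, v(\tau) = \tau\D$ and $(1-\tau)v(\tau) = (1-\tau)\E$ for $\tau\in\{0,1\}$. Substituting into the sum of local contributions rewrites $\mathcal{L}^* f_N(\tau)$ as $\sum_{j=1}^N c_j A_j$, with $c_1 = 2(\tau_1+\tau_2-1)$, $c_k = 2(\tau_{k+1}-\tau_{k-1})$ for $2\le k\le N-1$, and $c_N = 2(1-\tau_{N-1}-\tau_N)$. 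A direct computation gives the partial sums $C_j := c_1+\cdots+c_j = 2(\tau_j+\tau_{j+1}-1)$ for $1\le j\le N-1$ and $C_N=0$, so that Abel summation yields
\[
\mathcal{L}^* f_N(\tau) = \sum_{j=1}^{N-1} 2(\tau_j+\tau_{j+1}-1)(A_j-A_{j+1}).
\]

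Each summand vanishes for an elementary reason: either $\tau_j \neq \tau_{j+1}$, forcing $\tau_j+\tau_{j+1}-1 = 0$, or $\tau_j = \tau_{j+1}$, in which case omitting site $j$ versus site $j+1$ from $\tau$ produces the same length-$(N-1)$ configuration so that $A_j = A_{j+1}$. Hence $\mathcal{L}^* f_N \equiv 0$ and $\pi_N = \mu_N$. The main obstacle I anticipate is verifying the algebraic rewriting in the decisive step above, since this is the place where the DEHP bulk relation interacts with both $X = \D+\E$ and the boundary structure; once that identity and the partial-sum computation are in hand, the telescoping and the case analysis on consecutive $\tau_j,\tau_{j+1}$ are routine.
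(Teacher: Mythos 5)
Your proof is correct. The paper gives no argument for this proposition (it is quoted from Derrida--Evans--Hakim--Pasquier \cite{derrida1993exact}), and your direct verification that $\mathcal{L}^* f_N\equiv 0$ is essentially the standard cancellation-mechanism proof from that reference: the decisive identity reduces to the matrix identity $(\tau_{i+1}-\tau_i)(\D+\E)=(2\tau_{i+1}-1)v(\tau_{i+1})+(1-2\tau_i)v(\tau_i)$, which holds in all four cases, the coefficients $c_j$ and partial sums $C_j=2(\tau_j+\tau_{j+1}-1)$, $C_N=0$ are as you state, and the final appeal to irreducibility (valid since $\alpha,\beta>0$) correctly identifies $f_N/Z_N$ with $\mu_N$ even without knowing a priori that $f_N\geq 0$, since the kernel of $\mathcal{L}^*$ is one-dimensional.
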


\begin{remark} \label{rmk:denomenator 0} As noted by \cite{mallick1997finite,essler1996representations}, the denominator $Z_N:=\ll W|(\E+\D)^N|V\rr$ of the matrix product ansatz \eqref{eq:MPA open ASEP} being nonzero can be guaranteed by $ABCD\notin\{q^{-l}:l\in\NN_0\}$, where we recall that $A$, $B$, $C$ and $D$ are given by \eqref{eq:defining ABCD}. 
As shown in \cite[Appendix A]{mallick1997finite}, under the assumption that $\ll W|V\rr=1$, any matrix product of the form $\ll W|\D^{n_1}\E^{m_1}\dots\D^{n_k}\E^{m_k}|V\rr$ only depends on the parameters $q,\alpha,\beta,\gamma,\delta$ and numbers $n_1,m_1,\dots,n_k,m_k\in\ZZ_+$. In particular, it does not depend on the specific examples of matrices $\D$, $\E$ and vectors $\ll W|$, $|V\rr$ that satisfy the $\dehp$ algebra \eqref{eq:DEHP algebra}.  \end{remark} 

We will use the example constructed in \cite{uchiyama2004asymmetric}, which works for general parameters $q, \alpha, \beta, \gamma, \delta$ and is closely related to the Jacobi matrices for the Askey--Wilson orthogonal  polynomials \cite{askey1985some}.
We introduce this example as follows.
Assume $ABCD\notin\{q^{-l}:l\in\NN_0\}$.
For $m\in\NN_0$, we define $\alpha_m,\beta_m,\gamma_m,\delta_m,\ep_{m},\varphi_{m}$ in terms of $(A,B,C,D,q)$ by the formulas  in \cite[page 1243]{bryc2010askey}:
\be \label{eq:entries}
\begin{split}
    \alpha_m&=-AB q^m \beta_m ,\\
\beta_m&=\frac{1-A B C D q^{m-1} }
{\sqrt{1-q} (1-A B C D q^{2 m} ) (1-A B
C D q^{2 m-1} )} ,\\
\varepsilon_m&=\frac{(1-q^m)
(1-A C
q^{m-1} ) (1-AD
q^{m-1} ) (1-B C
q^{m-1} ) (1-B D q^{m-1} )
}{\sqrt{1-q} (1-A B C D q^{2
m-2} ) (1-A B C D q^{2 m-1} )} ,\\
\varphi_m&=-CD q^{m-1}\varepsilon_m ,\\
\gamma_m&= \frac{A}{\sqrt{1-q}}-\frac{\alpha_m}{A} (1-AC q^m)(1-ADq^m)-
\frac{A\varepsilon_m}{(1-AC q^{m-1})(1-ADq^{m-1})},\\
\delta_m&=\frac{1}{A\sqrt{1-q}}-\frac{\beta_m}{A} (1-AC q^m)(1-ADq^m)-
\frac{A\varphi_m}{(1-AC q^{m-1})(1-ADq^{m-1})}.
\end{split}
\ee 
%We note that $\gamma_m$ and $\delta_m$ above are well-defined by the choices of $\ep_m$ and $\varphi_m$.
 We note that $\gamma_m$ and $\delta_m$ above are well-defined by the choices of $\ep_m$ and $\varphi_m$.
These formulas are also well-defined for $q=0$ and/or $A=0$ by continuity.

Consider the infinite  tri-diagonal matrices:
\be\label{eq:D and E}\E=\frac{1}{1-q}\mathbf{I}+\frac{1}{\sqrt{1-q}}\mathbf{y},\quad \D=\frac{1}{1-q}\mathbf{I}+\frac{1}{\sqrt{1-q}}\mathbf{x},\ee
where $\bI$ denotes the infinite identity matrix,  
\be \label{eq:x and y}
      \mathbf{x}=\left[\begin{matrix}
        \gamma_0 & \ep_1 & 0 &\dots \\
        \alpha_0 &  \gamma_1& \ep_2 &\dots \\
        0 & \alpha_1 & \gamma_2& \dots\\
        \vdots &\vdots & \vdots& \ddots\\
      \end{matrix}\right], \quad \mathbf{y}=\left[\begin{matrix}
        \delta_0 & \varphi_1 & 0 &\dots \\
        \beta_0 & \delta_1 & \varphi_2 & \dots\\
        0 & \beta_1 & \delta_2 & \dots\\
        \vdots & \vdots& \vdots& \ddots\\
      \end{matrix}\right]\,
    \ee
and  infinite vectors 
\be\label{eq:W and V}\ll W|=[1,0,0,\dots],\quad |V\rr=[1,0,0,\dots]^T.\ee 
As shown in \cite[Section 2.1]{bryc2017asymmetric}, they satisfy  the $\dehp$ algebra \eqref{eq:DEHP algebra} with parameters $(q,\alpha,\beta,\gamma,\delta)$.

The next result introduces the matrix product ansatz for the open ASEP with a light particle.
\begin{proposition}[c.f. \cite{uchiyama2008two}]\label{prop:MPA two}
    Assume \eqref{eq:conditions open ASEP} and let $\D$, $\E$, $\ll W|$ and $|V\rr$ satisfy the $\dehp$ algebra~\eqref{eq:DEHP algebra}. Define $\A=\D\E-\E\D$.
     Then the stationary measure of open ASEP with a light particle can be written as 
    \be \label{eq:MPA open ASEP light}
    \mt_{N,1}\lb\tau_1,\dots,\tau_N\rb=\frac{\ll W|\prod_{i=1}^N\lb\one(\tau_i=0)\E+\one(\tau_i=1)\D+\one(\tau_i=\ha)\A\rb|V\rr}{[y]\ll W|(\E+\D+y\A)^N|V\rr},
    \ee 
for any $(\tau_1,\dots,\tau_N)\in\widehat{\Omega}_{N,1}$, under the assumption that the denominator \be\label{eq:denominator one light particle}\hZ_N:=[y]\big{\ll} W\big{|}(\E+\D+y\A)^N\big{|}V\big{\rr}\ee is nonzero. We mention that this notation denotes the coefficient of the monomial $y=y^1$ in the latter matrix product $\ll W|(\E+\D+y\A)^N|V\rr$, regarded as a polynomial with variable $y$.
\end{proposition}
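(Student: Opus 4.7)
The plan is to verify directly that the right-hand side of \eqref{eq:MPA open ASEP light} defines a stationary measure for the open ASEP with a light particle by extending the classical telescoping argument behind Proposition \ref{prop:MPA} to the three-letter alphabet $\{0,1,\ha\}$. With $\A := \D\E - \E\D$ playing the role of the light particle, the task reduces to establishing two new bulk commutation relations that parallel $\D\E - q\E\D = \D + \E$ and encode the bulk swaps $\ha 0 \leftrightarrow 0\ha$ and $1\ha \leftrightarrow \ha 1$.

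The core algebraic step is to show
\begin{equation*}
\A\E - q\E\A = \A, \qquad \D\A - q\A\D = \A.
\end{equation*}
Using the DEHP relation, I would first rewrite $\A = (q-1)\E\D + \D + \E$, then expand each left-hand side and reduce repeatedly with $\D\E = q\E\D + \D + \E$; the resulting cancellations collapse everything to $\A$ itself. Each of these identities matches the rates in the master equation exactly as $\D\E - q\E\D = \D + \E$ does for the $10 \leftrightarrow 01$ swap. Because every right-hand side is a single-letter expression, inserting $\A$ at the light-particle position makes every bulk contribution to $(\mt_{N,1}\mathcal{L})(\tau)$ collapse to a length-$(N-1)$ matrix product, so summing over bulk bonds telescopes to boundary terms at sites $1$ and $N$.

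The boundary terms are then handled exactly as in the single-species case. The semi-permeable boundary condition forbids light particles from entering or exiting, so the only boundary rates involve $\D$ and $\E$, and the DEHP boundary relations $\ll W|(\alpha\E - \gamma\D) = \ll W|$ and $(\beta\D - \delta\E)|V\rr = |V\rr$ cancel the remaining telescoped contributions. This yields $(\mt_{N,1}\mathcal{L})(\tau) = 0$ for every $\tau \in \widehat{\Omega}_{N,1}$, so the formula \eqref{eq:MPA open ASEP light} is stationary. Uniqueness of the stationary measure on the irreducible sector $\widehat{\Omega}_{N,1}$ then forces it to equal $\mt_{N,1}$, provided the denominator $\hZ_N$ is nonzero so that normalization is well defined.

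It remains to justify the denominator. The variable $y$ is merely a marker on light particles: expanding $(\E + \D + y\A)^N$ as a polynomial in $y$, the coefficient of $y^1$ is precisely the sum of $\ll W| X_{\tau_1}\cdots X_{\tau_N}|V\rr$ over all $N$ possible positions of the unique $\A$-insertion, i.e.\ the unnormalized total mass on $\widehat{\Omega}_{N,1}$. Hence $[y]\ll W|(\E + \D + y\A)^N|V\rr$ is the correct partition function. The main potential obstacle I foresee is the careful bookkeeping of signs across the three types of bulk swaps when summing the telescoped terms, together with the verification that no extra boundary residue is produced by the two new commutation relations; however, once $\A\E - q\E\A = \A$ and $\D\A - q\A\D = \A$ are in hand, the entire argument is a mechanical extension of the single-species proof, and no new algebraic input beyond the DEHP algebra is required.
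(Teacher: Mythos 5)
Your proposal is correct and is essentially the route the paper relies on: the paper does not reprove the statement but cites \cite{uchiyama2008two}, whose argument is exactly the generator-cancellation (telescoping) computation showing $L^*f_N(\tau)=0$, based on the relations $\A\E-q\E\A=\A$ and $\D\A-q\A\D=\A$ together with the DEHP boundary relations, with the observation that $\A=\D\E-\E\D$ satisfies these two relations (as recorded in the remark following the proposition). Your identification of $[y]\ll W|(\E+\D+y\A)^N|V\rr$ as the total mass on $\widehat{\Omega}_{N,1}$ and the appeal to uniqueness of the stationary measure on the irreducible sector complete the argument in the same way, so there is no substantive difference in approach.
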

\begin{remark}
    We note that the matrix product ansatz introduced in \cite{uchiyama2008two} is in a more general form than the version above. The matrix $\A$ only needs to satisfy the conditions $\A\E - q\E\A = \A$ and $\D\A - q\A\D = \A$. It can be verified that $\A = \D\E - \E\D$ satisfies these two relations, but the converse is in general not true. This matrix product ansatz can also provide the stationary measure for the open ASEP with $r$ light particles for general $r \in \mathbb{N}$, which we leave for future work.
    %As mentioned in the introduction, this system is also known as the two-species open ASEP with semi-permeable boundaries in \cite{uchiyama2008two}.
\end{remark}

The issue of when the denominator $\hZ_N$ equals zero is not addressed in \cite{uchiyama2008two}. The following result provides a condition under which $\hZ_N$ is nonzero.
\begin{lemma}\label{lem:deno0}
    Assume $ABCD\notin\{q^{-l}:l\in\NN_0\}$ and $AC\neq 1$. Suppose $\D$, $\E$, $\ll W|$ and $|V\rr$ satisfy the DEHP algebra \eqref{eq:DEHP algebra}, $\ll W|V\rr=1$ and $\A=\D\E-\E\D$. Then $\hZ_N$ defined by \eqref{eq:denominator one light particle} is nonzero.
\end{lemma}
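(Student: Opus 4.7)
The plan is to reduce $\hat Z_N$ to a closed-form product by combining a commutator identity with the explicit Uchiyama--Sasamoto--Wadati representation (\ref{eq:D and E})--(\ref{eq:W and V}), and then verify that the resulting expression is nonzero under the two hypotheses.

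\emph{Algebraic reduction.} With $T:=\mathbf{D}+\mathbf{E}$, the DEHP relation yields $[\mathbf{D},T]=[\mathbf{D},\mathbf{E}]=\mathbf{A}$, so telescoping the coefficient of $y$ in $(T+y\mathbf{A})^N$ gives
\[
\hat Z_N=\sum_{k=0}^{N-1}\ll W|T^{k}[\mathbf{D},T]T^{N-1-k}|V\rr=\ll W|\mathbf{D}T^N|V\rr-\ll W|T^N\mathbf{D}|V\rr.
\]
By Proposition~\ref{prop:MPA} the right-hand side equals $Z_{N+1}\lb\mu_{N+1}(\tau_1=1)-\mu_{N+1}(\tau_{N+1}=1)\rb$. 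Since Remark~\ref{rmk:denomenator 0} gives $Z_{N+1}\neq 0$ under $ABCD\notin\{q^{-l}\}$, the lemma is equivalent to showing that the two boundary densities of the $(N{+}1)$-site standard open ASEP differ.

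\emph{Calculation in the tridiagonal representation.} Since $\ll W|M|V\rr=M_{00}$ and $\mathbf{D}$ is tridiagonal, the commutator collapses to $\hat Z_N=\mathbf{D}_{01}(T^N)_{10}-(T^N)_{01}\mathbf{D}_{10}$. Symmetrizing $T$ by a diagonal conjugation yields the universal identity $(T^N)_{01}/(T^N)_{10}=T_{01}/T_{10}$ for every $N\geq 1$; substituting this, together with $T_{0,1}=\mathbf{D}_{01}+\mathbf{E}_{01}$ and $T_{1,0}=\mathbf{D}_{10}+\mathbf{E}_{10}$, produces
\[
\hat Z_N=\frac{(T^N)_{10}}{T_{10}}\bigl(\mathbf{D}_{01}\mathbf{E}_{10}-\mathbf{E}_{01}\mathbf{D}_{10}\bigr).
\]
A direct $q$-Pochhammer computation from (\ref{eq:entries}) then gives
\[
\mathbf{D}_{01}\mathbf{E}_{10}-\mathbf{E}_{01}\mathbf{D}_{10}=\frac{(1-AC)(1-AD)(1-BC)(1-BD)}{(1-q)(1-ABCD)(1-ABCDq)},
\]
in which $(1-AC)\neq 0$ by the hypothesis $AC\neq 1$; the factors $(1-AD),(1-BC),(1-BD)$ are strictly positive from the sign constraints in (\ref{eq:conditions qABCD}); and the denominator factors are nonzero by $ABCD\notin\{q^{-l}\}$. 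So the whole constant factor is nonzero.

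\emph{The main obstacle: $(T^N)_{10}\neq 0$.} Expanding $T^{N+1}=T\cdot T^N$ along row $0$ gives $(T^N)_{10}=(Z_{N+1}-T_{00}Z_N)/T_{01}$, and evaluating $T_{00}$ from (\ref{eq:entries}) together with the parameter translations $AB=-\delta/\beta$, $CD=-\gamma/\alpha$ obtained from (\ref{eq:phi}) leads to the clean formula $T_{00}=(\alpha+\beta+\gamma+\delta)/(\alpha\beta-\gamma\delta)$ (well defined because $\alpha\beta-\gamma\delta=\alpha\beta(1-ABCD)\neq 0$). Hence $\hat Z_N=0$ would force the stationary current $Z_N/Z_{N+1}$ of the $(N{+}1)$-site open ASEP to equal $(\alpha\beta-\gamma\delta)/(\alpha+\beta+\gamma+\delta)$, which is precisely the current carried by the putative Bernoulli product measure at boundary density $(\alpha+\delta)/(\alpha+\beta+\gamma+\delta)$; by Remark~\ref{rmk:left and right density} this product measure is genuinely stationary only on the excluded curve $AC=1$. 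To upgrade this heuristic into a rigorous statement at every finite $N$, I would appeal to the Askey--Wilson integral representation $Z_N=\int \phi(x)^N\,\nu(dx)$ with $\phi(x)=2(1+x)/(1-q)$ from \cite{bryc2017asymmetric, wang2023askey}, rewrite $Z_{N+1}-T_{00}Z_N=\int \phi(x)^N(\phi(x)-T_{00})\,\nu(dx)$, and expand the degree-one polynomial $\phi-T_{00}$ in the Askey--Wilson orthogonal polynomial basis in order to factor out an explicit $(1-AC)$ together with a nonvanishing product of $q$-Pochhammer symbols. This last expansion is where the bulk of the technical work lies.
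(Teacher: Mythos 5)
Your algebraic reduction is correct and goes a genuinely different way than the paper: telescoping the coefficient of $y$ gives $\hZ_N=\ll W|\D T^N|V\rr-\ll W|T^N\D|V\rr$ with $T=\D+\E$, tridiagonality of the Uchiyama representation collapses this to $\D_{01}(T^N)_{10}-(T^N)_{01}\D_{10}$, the reversal identity $(T^N)_{01}T_{10}=(T^N)_{10}T_{01}$ holds (better justified by reversing paths than by diagonal conjugation, since off-diagonal entries of $T$ can vanish in the shock region, e.g. $T_{12}=0$ when $AC=q^{-1}$), and your evaluation
\[
\D_{01}\E_{10}-\E_{01}\D_{10}=\frac{(1-AC)(1-AD)(1-BC)(1-BD)}{(1-q)(1-ABCD)(1-ABCDq)}
\]
checks out and is nonzero exactly under $AC\neq1$, $ABCD\notin\{q^{-l}:l\in\NN_0\}$. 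However, the proof is not complete. Everything now hinges on $(T^N)_{10}\neq0$ for every $N$ — equivalently, by your own first paragraph, on $\mu_{N+1}(\tau_1=1)\neq\mu_{N+1}(\tau_{N+1}=1)$ at every finite $N$ — and that is essentially the content of the lemma; nothing you wrote establishes it. The current heuristic is not a contradiction: nothing proven so far excludes that the finite-$N$ ratio $Z_N/Z_{N+1}$ equals $(\alpha\beta-\gamma\delta)/(\alpha+\beta+\gamma+\delta)$ off the curve $AC=1$, since the open ASEP current at finite $N$ is not the current of any product measure. And the proposed repair via $Z_N=\int\phi(x)^N\,\nu(\dd x)$ is exactly where the difficulty sits: $\nu$ is a \emph{signed} measure in the shock region, so positivity of the integrand is unavailable, and extracting a factor $(1-AC)$ from $\int\phi^N(\phi-T_{00})\,\nu(\dd x)$ for every finite $N$ is left as "the bulk of the technical work," i.e.\ unproven.

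For comparison, the paper avoids any finite-$N$ density or partition-function estimate. It uses that the two-species MPA weights $f_N(\tau)$ satisfy $L^*f_N=0$; by irreducibility and Perron--Frobenius, $f_N$ is a scalar multiple of the strictly positive stationary vector, so $\hZ_N=\sum_\tau f_N(\tau)=0$ would force $f_N\equiv0$. Taking the linear combination $\ll W|(\alpha\E-\gamma\D)^{N-1}\A|V\rr=0$ and applying the boundary relation $\ll W|(\alpha\E-\gamma\D)=\ll W|$ repeatedly reduces this to $\ll W|\A|V\rr=0$, i.e.\ $\ep_1\beta_0=\varphi_1\alpha_0$, which contradicts $AC\neq1$ and $ABCD\neq1$ via the same entries \eqref{eq:entries} you computed. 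That positivity input is what your approach is missing; with it, the lemma follows directly and your corner-entry factorization is no longer needed, while without it the key nonvanishing claim $(T^N)_{10}\neq0$ remains open.
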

\begin{proof}
    In view of Remark \ref{rmk:denomenator 0}, the value of $\hZ_N$ is fixed under our assumptions. One can assume that the matrices $\D$, $\E$ and vectors $\ll W|$, $|V\rr$ are the ones provided by \eqref{eq:entries}, \eqref{eq:D and E}, \eqref{eq:x and y} and \eqref{eq:W and V}.
    
    We notice that the proof in \cite{uchiyama2008two} of the stationarity of the measure $\mt_{N,1}$ defined by  \eqref{eq:MPA open ASEP light} is by showing that $L^*f_N(\tau)=0$, where $L^*$ is the backwards generator of the system, and
    \[f_N(\tau):=\Bigg{\ll} W\Bigg{|}\prod_{i=1}^N\lb\one(\tau_i=0)\E+\one(\tau_i=1)\D+\one(\tau_i=\ha)\A\rb\Bigg{|}V\Bigg{\rr}\quad\mbox{for}\quad\tau=(\tau_1,\dots,\tau_N)\in\widehat{\Omega}_{N,1}.\]
    We note that \[\hZ_N=\sum_{\tau\in\widehat{\Omega}_{N,1}}f_N(\tau).\]
    Since the open ASEP with a light particle is an irreducible Markov process on state space $\widehat{\Omega}_{N,1}$, by the Perron-Frobenius theorem,   $\hZ_N=0$ implies that $f_N(\tau)$ is constantly $0$. We next proceed with a proof by contradiction and assume that $f_N(\tau)$ is constantly $0$.
    By linear combination we have
    \[\big{\ll} W\big{|}(\alpha\E-\gamma\D)^{N-1}\A\big{|}V\big{\rr}=0.\]
    Using    $\ll W|(\alpha\E-\gamma\D)=\ll W|$ multiple times and in view of $\A=\D\E-\E\D$, we have 
    $\ll W|\D\E|V\rr=\ll W|\E\D|V\rr$. Using \eqref{eq:D and E} we have $\ll W|\mathbf{x}\mathbf{y}|V\rr=\ll W|\mathbf{y}\mathbf{x}|V\rr$. Then in view of \eqref{eq:x and y} and \eqref{eq:W and V}, we have $\varepsilon_1\beta_0=\varphi_1\alpha_0$. Note that $\varphi_1=-CD\varepsilon_1$ and $\alpha_0=-AB\beta_0$ by \eqref{eq:entries}, and combining with $ABCD\neq1$  we get $\varepsilon_1\beta_0=0$. On the other hand, from the formulas \eqref{eq:entries} one can observe that $\beta_0\neq0$, and that $AC\neq1$ implies $\varepsilon_1\neq0$. Hence we have reached a contradiction.  The proof is concluded.
\end{proof}

We next provide the proof of Proposition \ref{prop:simple relation}.
 
\begin{proof}[Proof of Proposition \ref{prop:simple relation}]
Recall our assumptions $ABCD\notin\{q^{-l}:l\in\NN_0\}$ and $AC\neq1$. 
In view of Remark \ref{rmk:denomenator 0} and Lemma \ref{lem:deno0} respectively, we have $Z_{N+1}\neq0$ and $\hZ_N\neq0$.

By Proposition \ref{prop:MPA} we have, for $i \in \lBr N \rBr$,
    \[\m_{N+1}(\tau_i=1)-\m_{N+1}(\tau_{i+1}=1)=\frac{1}{Z_{N+1}}\big{\ll} W\big{|}(\D+\E)^{i-1}(\D\E-\E\D)(\D+\E)^{N-i}\big{|}V\big{\rr}.\]
     By Proposition \ref{prop:MPA two} we have, for $1\leq i\leq N$,
    \[\mt_{N,1}\lb\lc_1=i\rb=\frac{1}{\hZ_N}\big{\ll} W\big{|}(\D+\E)^{i-1}\A(\D+\E)^{N-i}\big{|}V\big{\rr}.\]
Comparing the above two equations, we have 
\[\m_{N+1}(\tau_i=1)-\m_{N+1}(\tau_{i+1}=1)=\frac{\hZ_N}{Z_{N+1}}\mt_{N,1}\lb\lc_1=i\rb.\]
For any $1\leq k\leq l\leq N$, by adding up the above equation for $i \in \lBr k,l \rBr$, we have 
    \be \label{eq:relation open ASEP to light}
    \m_{N+1}(\tau_k=1)-\m_{N+1}(\tau_{l+1}=1)
    =\frac{\hZ_N}{Z_{N+1}} \mt_{N,1}\lb k\leq\lc_1\leq l\rb.
    \ee 
    In particular, taking $k=1$ and $l=N$, we have 
    \be \label{eq:normalization ratio}
    \m_{N+1}(\tau_1=1)-\m_{N+1}(\tau_{N+1}=1)=\frac{\hZ_N}{Z_{N+1}}.
    \ee 
Comparing \eqref{eq:relation open ASEP to light} and \eqref{eq:normalization ratio}, we have 
\[\mt_{N,1}\lb k\leq\lc_1\leq l\rb=\frac{\m_{N+1}(\tau_k=1)-\m_{N+1}(\tau_{l+1}=1)}{\m_{N+1}(\tau_1=1)-\m_{N+1}(\tau_{N+1}=1)}.\]
This finishes the proof.
\end{proof}  
 
\section{Densities at the open boundaries}\label{sec:proof of boundary density}
In this section, we first use the Askey--Wilson polynomials to prove the formulas for the limiting densities at the left and right boundaries of open ASEP, as stated in Proposition \ref{prop:boundary density}. Combined with the simple relation (Proposition \ref{prop:simple relation}) obtained in the previous section, we give the proof of Theorem \ref{thm:main thm} concerning the maximal current phase and also the proof of Theorem \ref{thm:mainHighLow} concerning the high and low density phases, in the case when $r=1$.

\subsection{Proof of Proposition \ref{prop:boundary density}}\label{subsec: 3.1}
We first review some background material. Consider the open ASEP stationary measure on the lattice $\lBr N \rBr$ with fixed jump rates $q,\alpha,\beta,\gamma$ and $\delta$. It is known since \cite{liggett1975ergodic} that, as $N\rightarrow\infty$, this sequence of measures weakly converges to a probability measure on $\{0,1\}^{\ZZ_+}$, and that the limiting measure can be seen as a (non-unique) stationary measure of a certain ASEP system on the semi-infinite lattice $\ZZ_+$. See also \cite{liggett1975ergodic,grosskinsky2004phase,sasamoto2012combinatorics,bryc2017asymmetric,yang2024limits} for studies of this limit. By our definition, the marginal distribution of this limiting measure at the first site is simply the Bernoulli measure with probability $\ld$. 

As mentioned in Section \ref{subsec:related works}, the stationary measure of the open ASEP can be effectively characterized by techniques involving Askey--Wilson polynomials and Askey--Wilson signed measures, as developed by \cite{uchiyama2004asymmetric,bryc2010askey,bryc2017asymmetric,wang2023askey}. The limiting measure on $\{0,1\}^{\ZZ_+}$ mentioned in the previous paragraph has also been characterized by Askey--Wilson signed measures in \cite[Section 5]{bryc2017asymmetric} in the fan region and in \cite{yang2024limits} in the shock region. We have the following result.
 
\begin{lemma}[c.f. \cite{bryc2017asymmetric,yang2024limits}] \label{lem:characterization of ld}
    We have the following characterizations of $\ld$.
    \begin{enumerate}
        \item [$\bullet$] (low density phase) Assume $C>1$ and $C>A$, we have $\ld=1/(1+C)$.
        \item [$\bullet$] (high density phase) Assume $A>1$ and $A>C$. Assume also that $C/A\notin\{q^l:l\in\ZZ_+\}$ if $C\geq1$. Then there exists some $\ep>0$ such that for $t\in(1-\ep,1]$,
    \be\label{eq:in proof HD}t\ld+1-\ld=\frac{A}{(1+A)^2}\int_{\RR}\lb1+t+2\sqrt{t}x\rb\nu\lb\mathsf{d} x; A\sqrt{t},\frac{\sqrt{t}}{A},\frac{C}{\sqrt{t}},\frac{D}{\sqrt{t}}\rb.\ee
        \item [$\bullet$] (maximal current phase) Assume $A<1$ and $C<1$. Then for $t\in(0,1]$,
    \be\label{eq:in proof MC} t\ld+1-\ld=\frac{1}{4}\int_{\RR}\lb1+t+2\sqrt{t}x\rb\nu\lb\mathsf{d} x; \sqrt{t},\sqrt{t},\frac{C}{\sqrt{t}},\frac{D}{\sqrt{t}}\rb.\ee
    \end{enumerate}
    
    We mention that $\nu\lb\mathsf{d} x;a,b,c,d\rb$ is the Askey--Wilson signed measure introduced in \cite{wang2023askey}. They are finite signed measures compactly supported in $\RR$ and with total mass $1$, which depend on parameters $a,b,c,d$ and $q$ in a suitable parameter region. Since we will not use the explicit form of $\nu\lb\mathsf{d} x;a,b,c,d\rb$, we refer the reader to Definitions 2.1 and 2.2 in \cite{wang2023askey} for a detailed definition.   
\end{lemma}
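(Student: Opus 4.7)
The plan is to obtain $\ld$ as the marginal at site $1$ of the weak limit on $\{0,1\}^{\ZZ_+}$ of the finite-volume stationary measures $\mu_N$, using the Askey--Wilson signed measure representation of that limit developed in \cite{bryc2017asymmetric,yang2024limits}.

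For the low density phase, I would cite the classical identification (going back to \cite{liggett1975ergodic} for open TASEP, and extended to open ASEP in \cite{bryc2017asymmetric}) of the limit on $\ZZ_+$ as the Bernoulli product measure with intensity $\rho_{\ell} = 1/(1+C)$. The marginal at site $1$ is then Bernoulli with parameter $1/(1+C)$, giving $\ld = 1/(1+C)$ at once.

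For the maximal current and high density phases, I would start from the matrix product ansatz (Proposition \ref{prop:MPA}) together with the tridiagonal realization \eqref{eq:D and E}--\eqref{eq:W and V}, and write
\begin{equation*}
t\,\mu_N(\tau_1 = 1) + \mu_N(\tau_1 = 0) = \frac{\langle W|(t\D + \E)(\D+\E)^{N-1}|V\rangle}{\langle W|(\D+\E)^N|V\rangle}.
\end{equation*}
I would then apply the spectral decomposition of $\D + \E$ via the Askey--Wilson Jacobi matrix (as in \cite[Section 5]{bryc2017asymmetric} for the fan region and \cite{yang2024limits} for the shock region) to convert both numerator and denominator into integrals against Askey--Wilson signed measures. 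After substituting $\sqrt{t}$ into the Askey--Wilson parameters (this is what deforms $A,1/A,C,D$ into $A\sqrt{t},\sqrt{t}/A,C/\sqrt{t},D/\sqrt{t}$), one arrives at the parameter configurations appearing in \eqref{eq:in proof HD} and \eqref{eq:in proof MC}. The integrand $1+t+2\sqrt{t}\,x$ is recognized as $(1+\sqrt{t}\,e^{i\theta})(1+\sqrt{t}\,e^{-i\theta})$ on the standard Askey--Wilson contour $x = \cos\theta$, and the prefactors $A/(1+A)^2$ and $1/4$ come from the limit of the ratio of the leading-order normalizing constants of the numerator and denominator as $N \to \infty$ in the high density and maximal current regimes respectively.

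The main obstacle is justifying the $N\to\infty$ limit of this ratio uniformly in $t$ near $1$, which requires the Askey--Wilson signed measure in \eqref{eq:in proof HD} to remain finite and to depend continuously on $t$ in a one-sided neighborhood of $t=1$. In the high density phase with $C\geq 1$, the parameters $C/\sqrt{t}$ and $\sqrt{t}/A$ can cross the unit circle as $t$ varies, producing atoms in $\nu(\mathsf{d} x;\cdot)$ (see \cite[Definition 2.1]{wang2023askey}). The hypothesis $C/A \notin \{q^l : l\in\ZZ_+\}$ is precisely what prevents these atoms from coalescing or from coinciding with singularities of the Askey--Wilson weight on this neighborhood, and thus secures the required continuity; once this is in place, extracting the linear-in-$t$ term in the generating function identity yields the stated formulas.
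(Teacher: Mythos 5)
Your overall strategy --- realizing $\ld$ as the site-$1$ marginal of the $N\to\infty$ limit of $\mu_N$ on $\{0,1\}^{\ZZ_+}$ and expressing that limit through Askey--Wilson signed measures --- is the same as the paper's, but the paper's proof at this point is a pure specialization of known results: the low and high density statements are read off from \cite[Theorem 1.2]{yang2024limits} (taking $m=1$ there), and \eqref{eq:in proof MC} from \cite[Theorem 12]{bryc2017asymmetric} (taking $K=1$). Your proposal instead re-derives these one-site identities from the matrix product ansatz, and there the decisive step is missing: after writing $t\,\m_N(\tau_1=1)+\m_N(\tau_1=0)$ as the ratio $\ll W|(t\D+\E)(\D+\E)^{N-1}|V\rr\,/\,\ll W|(\D+\E)^N|V\rr$, the passage to \eqref{eq:in proof HD}--\eqref{eq:in proof MC} requires the full asymptotic analysis of both matrix products as $N\to\infty$: identification of the leading contribution as an integral against an Askey--Wilson signed measure with $\sqrt t$ inserted into the parameters, control of the atomic part when $A\sqrt t>1$ (and possibly $C/\sqrt t\geq1$) in the high density phase, and uniformity in $t$ on a one-sided neighborhood of $1$. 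You name this as ``the main obstacle'' but only assert its resolution; that analysis is precisely the technical content of the two cited theorems, so as written the argument is incomplete: either you invoke those theorems directly (as the paper does, making the MPA computation unnecessary), or you must actually carry out the shock-region asymptotics of \cite{yang2024limits}, which your sketch does not contain. Relatedly, the hypothesis $C/A\notin\{q^l:l\in\ZZ_+\}$ (when $C\geq1$) serves to keep the quadruple $\lb A\sqrt t,\sqrt t/A,C/\sqrt t,D/\sqrt t\rb$ inside the admissible set $\Omega_q$ of \cite[Definition 2.1]{wang2023askey} for $t$ near $1$, i.e.\ it is a well-definedness condition inherited from \cite{yang2024limits}; your reading of it as preventing atoms from coalescing with singularities is heuristically in the right direction but is stated, not proved.

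In the low density phase your citation does not cover the stated range of parameters: \cite{bryc2017asymmetric} treats the fan region $AC<1$, while the lemma assumes only $C>1$, $C>A$, which includes the shock part $AC>1$; the identification of the semi-infinite limit as the Bernoulli-$1/(1+C)$ product measure on the whole low density phase is exactly \cite[Theorem 1.2]{yang2024limits}, which is what the paper cites (and \cite{liggett1975ergodic} gives the existence of the limiting semi-infinite measure, not this identification). This is fixable by citing the correct result, but as written your low density argument has a gap in the shock region.
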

\begin{proof}
    We reiterate that the Bernoulli measure with probability $\ld$ is the marginal distribution at the first site of the limiting measure (on $\{0,1\}^{\ZZ_+}$) for the open ASEP on $\lBr N \rBr$ as $N\rightarrow\infty$. 
    
    In the low density phase, by \cite[Theorem 1.2]{yang2024limits}, the limiting measure is simply the Bernoulli product measure on $\{0,1\}^{\ZZ_+}$ with probability $1/(1+C)$, hence $\ld=1/(1+C)$.  

    In the high density phase, the limiting measure is not a Bernoulli product measure anymore, and it is characterized by \cite[Theorem 1.2]{yang2024limits} in terms of Askey--Wilson signed measures. Equation \eqref{eq:in proof HD} follows from taking $m=1$ therein.

    In the maximal current phase, \eqref{eq:in proof MC} follows from taking $K=1$ in \cite[Theorem 12]{bryc2017asymmetric}.
\end{proof}
 
Next we need to compute the right-hand sides of \eqref{eq:in proof HD} and \eqref{eq:in proof MC} to obtain explicit formulas for $\ld$.
We establish the following result regarding the first moments of Askey--Wilson signed measures.
    \begin{lemma}\label{lem:AW moment}
    Suppose that the Askey--Wilson signed measure $\nu\lb\dd x;a,b,c,d\rb$ is well-defined, i.e., $(a,b,c,d)\in\Omega_q$, where $\Omega_q$ is the subset of $\mathbb{C}^4$ defined in \cite[Definition 2.1]{wang2023askey}. Then we have 
        \be\label{eq:first moment}\int_{\RR}x\nu\lb\dd x;a,b,c,d\rb=\frac{a+b+c+d-abc-abd-acd-bcd}{2(1-abcd)}.\ee
    \end{lemma}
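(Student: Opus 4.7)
My approach is to extract the first moment from the three-term recurrence for the Askey--Wilson polynomials, using only their orthogonality with respect to the signed measure $\nu(\dd x;a,b,c,d)$ and the fact that $\nu$ has total mass $1$. Let $p_n(x;a,b,c,d|q)$ denote the Askey--Wilson polynomials normalized so that $p_0 \equiv 1$. They satisfy the standard three-term recurrence
\[
2x\,p_n(x)=A_n\,p_{n+1}(x)+B_n\,p_n(x)+C_n\,p_{n-1}(x),
\]
where the coefficients are well-known and, at $n=0$, specialize to
\[
A_0=\frac{(1-ab)(1-ac)(1-ad)}{a(1-abcd)},\qquad C_0=0,\qquad B_0=a+a^{-1}-A_0.
\]
Setting $n=0$ gives $p_1(x)=(2x-B_0)/A_0$, a monic-up-to-scale linear polynomial.

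Next I would invoke the defining property of $\nu$ from \cite{wang2023askey}: the signed measure is constructed precisely so that $\int_{\RR} p_m(x)p_n(x)\,\nu(\dd x;a,b,c,d)=0$ for $m\neq n$. Although $\nu$ is signed, this orthogonality is purely algebraic and continues to hold exactly as in the positive-measure setting. Applied to $m=0,\ n=1$ together with $\int \nu=1$, this yields
\[
0=\int_{\RR} p_1(x)\,\nu(\dd x;a,b,c,d)=\frac{2\int_{\RR} x\,\nu(\dd x;a,b,c,d)-B_0}{A_0},
\]
so that $\int x\,\nu(\dd x)=B_0/2=(a+a^{-1}-A_0)/2$.

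Finally I would check by direct algebra that
\[
\frac{1}{2}\left(a+\frac{1}{a}-\frac{(1-ab)(1-ac)(1-ad)}{a(1-abcd)}\right)=\frac{a+b+c+d-abc-abd-acd-bcd}{2(1-abcd)}.
\]
Bringing everything over a common denominator of $2a(1-abcd)$, the numerator becomes $(a^2+1)(1-abcd)-(1-ab)(1-ac)(1-ad)$; expanding and canceling the $a^3bcd$ and the constant terms leaves $a^2+a(b+c+d)-a^2(bc+bd+cd)-abcd$, which factors as $a[a+b+c+d-abc-abd-acd-bcd]$, matching the claimed right-hand side.

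The main thing to be careful about is the signed-measure nature of $\nu$: one must verify that the recurrence coefficients $A_0, B_0, C_0$ are valid at the level of the algebraic orthogonality relation (rather than requiring positivity), and that the assumption $(a,b,c,d)\in\Omega_q$ of the lemma is exactly what ensures $\nu$ is well-defined with total mass $1$, so that the derivation above is not vacuous (in particular $1-abcd\neq 0$). Since the recurrence and the orthogonality relations $\int p_m p_n\,\dd\nu=c_n\delta_{mn}$ are both derived algebraically in the Askey--Wilson framework of \cite{wang2023askey}, there is no genuine analytic obstacle, and the computation above gives \eqref{eq:first moment}.
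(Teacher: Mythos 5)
Your proposal is correct and follows essentially the same route as the paper: combine the unit total mass of $\nu$ with the orthogonality of the degree-one Askey--Wilson polynomial against $\nu$ (both available in \cite{wang2023askey}), and identify that linear polynomial from the $n=0$ recurrence coefficients $A_0,B_0$. The only difference is that you carry out the algebraic expansion of $B_0/2$ explicitly (and your computation is correct), whereas the paper sidesteps it by citing the formula for $\max(|a|,|b|,|c|,|d|)<1$ from \cite{bryc2010askey} and extending to all of $\Omega_q$ by rationality; note also that your division by $A_0$ is harmless but avoidable by integrating $2x = A_0 p_1(x)+B_0$ directly.
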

    \begin{proof} 
    We recall from \cite[Corollary 2.9]{wang2023askey} that Askey--Wilson signed measures integrate to $1$,  
    \be\label{eq:integrate 1}\int_{\RR}\nu\lb\dd x;a,b,c,d\rb=1.\ee
    By the orthogonality of the Askey--Wilson signed measures in \cite[Corollary 3.2]{wang2023askey},
    \be\label{eq:ort}\int_\mathbb{R}w_1(x)\nu(\mathsf{d} x;a,b,c,d)=0,\ee
    where $w_1(x)$ is the degree-$1$ Askey--Wilson polynomial. 
    By explicitly computing the linear polynomial $\omega_1(x)$ using the recurrence relations from \cite{askey1985some} (see also \cite[Section 2.1]{wang2023askey}), equation \eqref{eq:first moment} follows from \eqref{eq:integrate 1} and \eqref{eq:ort}. Note that this computation is a bit lengthy but actually does not need to be carried out. In fact, under $\max\lb|a|,|b|,|c|,|d|\rb<1$, equation \eqref{eq:first moment} appears as \cite[equation (2.6)]{bryc2010askey}. By \eqref{eq:integrate 1} and \eqref{eq:ort}, one can observe that the left-hand side of \eqref{eq:first moment} is a rational function of $a,b,c$ and $d$. Hence \eqref{eq:first moment} holds for general $(a,b,c,d)\in\Omega_q$. We conclude the proof.
    \end{proof}

Before we give the proof of Proposition \ref{prop:boundary density}, we recall the ``stochastic sandwiching'' of the open ASEP stationary measure. For two probability measures $\nu,\nu^{\prime}$ on $\Omega_N$, we say that $\nu$ \textbf{stochastically dominates} $\nu^{\prime}$, and write $\nu \succeq \nu^{\prime}$, whenever there exists a coupling $\mathbf{P}$ between $\nu$ and $\nu^{\prime}$ such that
\begin{equation}\label{eq:stochastic domination}
    \mathbf{P}( \eta_i \geq \eta^{\prime}_i \text{ for all } i \in \lBr N \rBr ) = 1 , 
\end{equation}  where $\eta \sim \nu$ and $\eta^{\prime} \sim \nu^{\prime}$.
\begin{lemma}[c.f. Lemma 2.1 in \cite{gantert2023mixing}]\label{lem:sandwiching}
    Fix $q\in[0,1)$ and consider real numbers
   \begin{equation}\label{eq:Sandwich}
       0<\alpha' \leq\alpha'',\quad\beta' \geq\beta''>0,\quad\gamma' \geq\gamma''\geq0,\quad0\leq\delta' \leq\delta''.
   \end{equation} 
Consider the open ASEP on the lattice $\lBr N \rBr$ with rates $(q,\alpha',\beta',\gamma',\delta')$ and $(q,\alpha'',\beta'',\gamma'',\delta'')$. We denote the stationary measures as $\mu_N'$ and $\mu_N''$, respectively. Then we have $\mu_N''\succeq\mu_N'$.
%The corresponding occupation variables are denoted as $(\tau'_1,\dots,\tau'_N)$ and  $(\tau''_1,\dots,\tau''_N)$. Then there exists a coupling of $\mu'_N$ and $\mu_N''$ such that almost surely $\tau'_i\leq\tau_i''$ for $i=1,\dots,N$.
\end{lemma}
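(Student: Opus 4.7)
The plan is to prove the stochastic domination by exhibiting a basic (Markovian) coupling of two open ASEP processes with the two sets of rates, and then checking that componentwise ordering is preserved by every possible transition. Once such a monotone coupling is constructed, starting it from the ordered pair of initial configurations $\eta'(0) \equiv 0$ and $\eta''(0) \equiv 1$ (or any coupled initial data with $\eta'(0) \leq \eta''(0)$) yields $\eta'(t) \leq \eta''(t)$ for all $t \geq 0$; taking $t \to \infty$ and invoking the ergodicity of the open ASEP (the chain is irreducible and finite, so each marginal converges to its unique stationary measure $\mu_N'$ and $\mu_N''$) then produces the desired stochastic ordering.

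To set up the coupling, I would use the same independent Poisson clocks on the bulk bonds for both processes: on each bond $(i,i+1)$, $i \in \lBr 1,N-1 \rBr$, attach one Poisson process of rate $1$ (for rightward jumps) and one of rate $q$ (for leftward jumps). When a clock rings, attempt the corresponding swap independently in each configuration; the exclusion rule handles the rest. For the boundary rates, I would split each rate into a common part and an extra part according to the inequalities in \eqref{eq:Sandwich}:
\begin{align*}
\alpha'' &= \alpha' + (\alpha''-\alpha'), & \gamma' &= \gamma'' + (\gamma'-\gamma''), \\
\delta'' &= \delta' + (\delta''-\delta'), & \beta' &= \beta'' + (\beta'-\beta'').
\end{align*}
At the left boundary, use a common rate-$\alpha'$ Poisson clock that attempts a left-boundary injection in both copies (each copy succeeds if its site $1$ is empty), and an extra rate-$(\alpha''-\alpha')$ clock that attempts an injection only in the $''$ copy; symmetrically, use a common rate-$\gamma''$ clock for removal in both copies, and an extra rate-$(\gamma'-\gamma'')$ clock for removal only in the $'$ copy. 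Handle the right boundary with $\beta,\delta$ analogously.

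The verification of monotonicity is now a short case check. In the bulk, the standard basic-coupling analysis shows that if $\eta' \leq \eta''$ componentwise and a bond clock rings, every one of the $\{(0,0),(1,0),(0,1),(1,1)\}$ pairs of bond configurations compatible with the ordering leads, after the attempted jump, to a configuration that still respects the ordering; the only potentially dangerous cases (where one copy jumps and the other does not) always widen the $''$ copy relative to the $'$ copy at the cost of a site where $\eta' = \eta''$ already, so the inequality is preserved. At the left boundary: a common injection can only flip $\eta'_1$ from $0$ to $1$ (possibly simultaneously with $\eta''_1$), and hence cannot violate $\eta'_1 \leq \eta''_1$; the extra injection only increases $\eta''_1$; a common removal can only flip $\eta''_1$ down when $\eta'_1$ is also being removed or already equals $0$; the extra removal only decreases $\eta'_1$. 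All four cases preserve the ordering, and an identical check works at the right boundary.

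The main substance of the argument is precisely this finite case check; it is routine but must be done carefully because the two copies are driven by partially shared and partially independent randomness, and one needs to verify that none of the \emph{asymmetric} transitions (the ones driven by the extra clocks $\alpha''-\alpha'$, $\gamma'-\gamma''$, $\delta''-\delta'$, $\beta'-\beta''$) can cross the copies. Since the only changes allowed by these extra clocks are $\eta''_1\colon 0 \to 1$, $\eta'_1\colon 1 \to 0$, $\eta''_N\colon 0 \to 1$, and $\eta'_N\colon 1 \to 0$, each extra clock strictly reinforces the inequality $\eta' \leq \eta''$. This completes the coupling construction; combining with the convergence to stationarity from an ordered initial pair gives $\mu_N'' \succeq \mu_N'$ as claimed.
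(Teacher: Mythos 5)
Your coupling argument is correct and is essentially the same canonical (basic-coupling) proof of attractiveness that the paper outsources by citing Lemma 2.1 of \cite{gantert2023mixing}: the bulk case check plus the splitting of the boundary rates into a common part and an extra part that only reinforces $\eta'\leq\eta''$. The only step worth phrasing a bit more carefully is the passage to stationarity: since the set of ordered pairs is closed in the finite product space, any (subsequential) weak limit of the joint laws of $\left(\eta'(t),\eta''(t)\right)$ started from ordered initial data is a coupling of $\mu_N'$ and $\mu_N''$ concentrated on ordered configurations, which yields $\mu_N''\succeq\mu_N'$.
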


\begin{proof}[Proof of Proposition \ref{prop:boundary density}]
We only need to prove the formulas for $\ld$. The formulas for $\rd$ then follows from the well-known particle-hole symmetry. Using Lemma \ref{lem:AW moment}, one can explicitly compute the right-hand sides of \eqref{eq:in proof HD} and \eqref{eq:in proof MC} and extract the formulas for $\ld$, which match the formulas provided in Proposition \ref{prop:boundary density}.
By Lemma \ref{lem:characterization of ld}, the result for $\ld$ is thus proved everywhere except for countably many ``exceptional curves'': the boundaries between phases $\{(A,C):A\leq1,C=1\}$, $\{(A,C):A=1,C\leq1\}$, $\{(A,C):A=C\geq1\}$ as well as $\{(A,C):C/A=q^l, A,C\geq1\}$ for $l\in\ZZ_+$. 

We recall from Remark \ref{rmk:left and right density} that the formulas for $\ld$ given in Proposition \ref{prop:boundary density} define a continuous function of $A,B,C$ and $D$ in the entire parameter space. In the following we will use this continuity and ``stochastic sandwiching'' to show that the result also holds along those ``exceptional curves''. 
 
% The following result is called the ``stochastic sandwiching'' of open ASEP stationary measure. For two probability measures $\nu,\nu^{\prime}$ on $\Omega_N$, we say that $\nu$ \textbf{stochastically dominates} $\nu^{\prime}$, and write $\nu \succeq \nu^{\prime}$, whenever there exists a coupling $\mathbf{P}$ between $\nu$ and $\nu^{\prime}$ such that
% \begin{equation}\label{eq:stochastic domination}
%     \mathbf{P}( \eta_i \geq \eta^{\prime}_i \text{ for all } i \in \lBr N \rBr ) = 1 , 
% \end{equation}  where $\eta \sim \nu$ and $\eta^{\prime} \sim \nu^{\prime}$.
% \begin{lemma}[c.f. Lemma 2.1 in \cite{gantert2023mixing}]\label{lem:sandwiching}
%     Fix $q\in[0,1)$ and consider real numbers
%    \begin{equation}\label{eq:Sandwich}
%        0<\alpha' \leq\alpha'',\quad\beta' \geq\beta''>0,\quad\gamma' \geq\gamma''\geq0,\quad0\leq\delta' \leq\delta''.
%    \end{equation} 
% Consider the open ASEP on the lattice $\lBr N \rBr$ with rates $(q,\alpha',\beta',\gamma',\delta')$ and $(q,\alpha'',\beta'',\gamma'',\delta'')$. We denote the stationary measures as $\mu_N'$ and $\mu_N''$, respectively. Then we have $\mu_N''\succeq\mu_N'$.
% %The corresponding occupation variables are denoted as $(\tau'_1,\dots,\tau'_N)$ and  $(\tau''_1,\dots,\tau''_N)$. Then there exists a coupling of $\mu'_N$ and $\mu_N''$ such that almost surely $\tau'_i\leq\tau_i''$ for $i=1,\dots,N$.
% \end{lemma}

Assume that $(A,C)$ belongs to one of the ``exceptional curves''. Then we can find sequences $\alpha'_k\rightarrow\alpha$,  $\alpha''_k\rightarrow\alpha$, $\beta'_k\rightarrow\beta$ and $\beta''_k\rightarrow\beta$ such that for each $k \in \N$ we have $\alpha'_k\leq\alpha\leq\alpha''_k$ and $\beta'_k \geq\beta\geq\beta''_k$; and that the corresponding $(A'_k,C'_k)$ and $(A''_k,C''_k)$ defined by $A'_k:=\phi_+\lb\beta'_k,\delta\rb$, $A''_k:=\phi_+\lb\beta''_k,\delta\rb$, $C'_k:=\phi_+\lb\alpha'_k,\gamma\rb$ and $C''_k:=\phi_+\lb\alpha''_k,\gamma\rb$ do not lie on these curves, where we recall  $\phi_{\pm}$ are defined by \eqref{eq:phi}. Using Lemma \ref{lem:sandwiching} above and taking the limit $N\rightarrow\infty$, we have $\ld\lb A'_k,B,C'_k,D\rb\leq\ld\lb A,B,C,D\rb\leq\ld\lb A''_k,B,C''_k,D\rb$ for each $k\in \N$. Since Proposition \ref{prop:boundary density} has already been proved at $(A'_k,B,C'_k,D)$ and $(A''_k,B,C''_k,D)$, by taking the limit $k\rightarrow\infty$ in the inequality above, we conclude the proof of the result at $(A,B,C,D)$.
%The proof is concluded.
\end{proof}

\subsection{Proof of the main theorems in the case of a single light particle}\label{subsec: 3.2} 

We have now all the tools to show our main theorems regarding the location of the light particle for $r=1$.

\begin{proof}[Proof of Theorem \ref{thm:main thm} ]
Recall our assumptions $\max(A,C) \leq 1$ and $AC \neq 1$. 
Let $(a_N)_{N=1,2,\dots}$ and $(b_N)_{N=1,2,\dots}$ satisfy  $1\leq a_N<b_N\leq N$ and $a_N,N-b_N\ra\infty$.
By Proposition \ref{prop:simple relation},
\be\label{eq:simple relation again}
\mt_{N,1}\lb k\leq\lc_1\leq l\rb=\frac{\m_{N+1}(\tau_k=1)-\m_{N+1}(\tau_{l+1}=1)}{\m_{N+1}(\tau_1=1)-\m_{N+1}(\tau_{N+1}=1)}.
\ee
By Proposition \ref{prop:bulk density},
\[\lim_{N\ra\infty}\m_{N+1}(\tau_{a_N+1}=1)=\frac{1}{2}.\]
By Proposition \ref{prop:boundary density},
\begin{align*}
    &\lim_{N\ra\infty}\m_{N+1}(\tau_1=1)=\ld=\frac{3-C-D-CD}{4(1-CD)}>\frac{1}{2},\\
    &\lim_{N\ra\infty}\m_{N+1}(\tau_{N+1}=1)=\rd=\frac{1 +A+B-3AB}{4(1-AB)}<\frac{1}{2}.
\end{align*} 
Taking $k=1$ and $l=a_N$ in \eqref{eq:simple relation again}, we have 
\[\lim_{N\ra\infty}\mt_{N,1}\lb\lc_1\in \lBr 1,a_N\rBr \rb=\frac{\ld-1/2}{\ld-\rd}. \]
Similarly, taking $k=b_N$ and $l=N$ in \eqref{eq:simple relation again} we have
\[\lim_{N\ra\infty}\mt_{N,1}\lb\lc_1\in \lBr b_N,N\rBr \rb=\frac{1/2-\rd}{\ld-\rd}.\]
The fact that 
\[\lim_{N\ra\infty}\mt_{N,1}\lb\lc_1\in \lBr a_N,b_N\rBr \rb=0\]
is a simple corollary of the above two limits. We conclude the proof.
\end{proof}

\begin{proof}[Proof of Theorem \ref{thm:mainHighLow} for $r=1$, $ABCD\notin\{q^{-l}:l\in\NN_0\}$ and $AC\neq1$]
We will only prove the result in the high density phase $A\geq 1$ and $A>C$. 
The result in the low density phase can be proved symmetrically.
By Proposition \ref{prop:bulk density},
\[\lim_{N\ra\infty}\m_{N+1}(\tau_{a_N+1}=1)=\frac{A}{1+A}.\]
By Proposition \ref{prop:boundary density},
\begin{align*}
    &\lim_{N\ra\infty}\m_{N+1}(\tau_1=1)=\ld=\frac{A^2+A+1-ACD-AC-AD}{(A+1)^2(1-CD)},\\ 
    &\lim_{N\ra\infty}\m_{N+1}(\tau_{N+1}=1)=\rd=\frac{A}{1+A}.
\end{align*} 
We recall from Remark \ref{rmk:left and right density} that $\ld\neq\rd$ since $AC\neq1$.
Taking $k=1$ and $l=a_N$ in \eqref{eq:simple relation again}, we have 
\[\lim_{N\ra\infty}\mt_{N,1}\lb\lc_1\in \lBr 1,a_N\rBr \rb=1. \] 
The proof is concluded.
\end{proof}

\section{Concentration of light particles in the high and the low density phases}\label{sec:Concentration}

In this section we prove Theorem \ref{thm:mainHighLow} and Theorem \ref{thm:MainMixingTime} in the fan region of the high and low density phases. 
We will always assume $A>1$ and $AC\leq 1$, i.e.\ we consider the open ASEP in the fan region of the high density phase with a finite number $r$ of light particles at locations $\lc_1(t),\lc_2(t),\dots,\lc_r(t)$ at time $t \geq 0$. Using the particle-hole symmetry, the results also hold when $C>1$ and $AC\leq 1$, i.e.\ the fan region of the low density phase. 

Let us start with an outline of the arguments.

\subsection{The strategy for the proof}

We have the following strategy in order to show Theorem \ref{thm:mainHighLow} on the locations of the light particles for $r \geq 1$, and Theorem \ref{thm:MainMixingTime} on the mixing time. 

%First, in Section \ref{sec:MultiASEP}, we define a multi-species extension of the ASEP with light and second class particles. In the bulk, second class particles follow the same transition rules as light particle, however they are allowed to enter and exit and the endpoints of the segment. 
In Section \ref{sec:SecondClassProjection}, we introduce a color projection of the open ASEP with light particles.  This will allows us to reduce the study of the location of the light particles under the stationary measure to the case when we choose the configuration for the remaining sites according to the stationary measure of a standard open ASEP. Moreover, we discuss stochastic domination for the stationary measure of the open ASEP with different boundary parameters.  

In Section \ref{sec:ASEPintegers}, we present the asymmetric simple exclusion process (ASEP) on the integer lattice $\Z$ with second class particles, and discuss the basic coupling to the open ASEP with light particles. This allows us to compare exclusion processes with different locations of second class particles. 

In Section \ref{sec:Moderate}, we discuss recent moderate deviation results for second class particles in the ASEP on the integers by Landon and Sosoe in \cite{landon2023tail} started from stationary initial data. We apply them to the open ASEP in the fan region using the so-called microscopic concavity coupling by Balazs and Seppäläinen from \cite{balazs2009fluctuation}.

%introduce an asymmetric simple exclusion process on a finite segment with suppressed moves, which we will use to bound the position of the light particles in a suitable projection of the open ASEP.  
%To this end, we use the censoring inequality by Peres and Winkler in order to compare the asymmetric simple exclusion process with suppressed moves to a stationary asymmetric simple exclusion process on a finite segment. Provided that the light particles in the projection of the open ASEP are initially sufficiently distant from the right boundary, we 

In Section \ref{sec:EscapeLight}, we argue that from a stationary open ASEP with the light particles placed initially at arbitrary, but fixed, positions, it takes time of order at most $N^{\theta}\log(N)$ for some $\theta>0$ until the light particles are with high probability at distance at least $N^{\theta}$ away from the right boundary. This is achieved by a multi-scale argument together with the moderate deviation results from Section~\ref{sec:Moderate}. We then apply a similar multi-scale argument to ensure that when starting from a typical position of the second class particles in the high density phase, at any sufficiently large time $t$, there is a positive probability for the particles to be close to the left boundary. This allows us to conclude Theorem \ref{thm:mainHighLow} in the fan region. 
 
In Section \ref{sec:MixingTimesLight}, we establish Theorem \ref{thm:MainMixingTime} using  estimates in previous subsections. 
More precisely, we first argue that the color projection to an open ASEP without light particles mixes in time of order~$N$ using the results by Gantert et al.\ in \cite{gantert2023mixing}. We then obtain a result on the coalescence time, which implies that the light particles have mixed after an additional order $N$ steps.%, using the results from Section \ref{sec:EscapeLight}.

\subsection{A color projection for the open ASEP}\label{sec:SecondClassProjection}

We consider a projection of the open ASEP with light particles, where we replace the light particle by either a first class particle or an empty site. More precisely, let $(\textup{Col}(t))_{t \geq 0}$ be a \textbf{coloring process} for the light particle, i.e., for each $t\geq 0$, let  $\textup{Col}(t) \in \{0,1\}^r$ such that $t \mapsto \textup{Col}(t)$ is càdlàg. Consider the open ASEP with light particles $(\tau(t))_{t \geq 0}$ and a coloring process $(\textup{Col}(t))_{t \geq 0}$. Then its \textbf{color projection} $(\eta(t))_{t \geq 0}$ is defined as 
\begin{equation}
\eta_x(t) = \begin{cases} \textup{Col}_i(t) & \text{ if } x = \lc_i(t) \text{ for some } i \in \lBr r \rBr \\
\tau_x(t) & \text{ otherwise. } 
\end{cases}
\end{equation} for all $t \geq 0$ and $x\in \lBr N \rBr$. We make the following observation.
\begin{lemma}\label{lem:LightProjection}
There exists a coloring process $(\textup{Col}(t))_{t \geq 0}$ such that the color projection 
$(\eta(t))_{t \geq 0}$ has the law of an open ASEP with parameters $q,\alpha,\beta,\gamma,\delta$, but without light particles.
\end{lemma}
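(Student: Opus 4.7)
The plan is to realize the coloring process $(\textup{Col}(t))_{t\geq 0}$ as a functional of an auxiliary standard open ASEP $(\eta(t))_{t\geq 0}$ coupled to $(\tau(t))_{t\geq 0}$ via basic coupling, and then to observe that this coupling forces the color projection to coincide with $\eta$.

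Concretely, I place a single graphical construction of independent Poisson clocks driving both processes simultaneously: at each bulk edge $(x,x+1)$ with $x \in \lBr 1,N-1 \rBr$, a rate-$1$ clock $P^R_x$ and a rate-$q$ clock $P^L_x$, and at the boundaries independent clocks $P^\alpha, P^\gamma, P^\beta, P^\delta$ with the respective rates. Each ring is used to update both processes according to their own dynamics: the multi-species rule for $\tau$ (so that, e.g., $P^R_x$ swaps $(\tau_x, \tau_{x+1})$ exactly when this pair lies in $\{(1,0),(\ha,0),(1,\ha)\}$) and the standard single-species rule for $\eta$ (so that $P^R_x$ swaps $(\eta_x, \eta_{x+1})$ exactly when it equals $(1,0)$); the boundary clocks act only on the leftmost or rightmost site and leave $\tau$ untouched when the relevant endpoint is $\ha$. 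Initialize $\eta(0)$ so that $\eta_x(0) = \tau_x(0)$ at every non-light site, with any initial colors in $\{0,1\}$ placed at the $r$ light sites. By the standard graphical construction, $\eta$ is then a Markov process with the law of the standard open ASEP with parameters $(q,\alpha,\beta,\gamma,\delta)$.

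The main step is to verify the invariant
\begin{equation*}
\eta_x(t) = \tau_x(t) \quad \text{for every } x \in \lBr N \rBr \text{ with } \tau_x(t) \neq \ha, \text{ and every } t\geq 0.
\end{equation*}
This is a mechanical case analysis at each clock ring. For a bulk rate-$1$ ring at $(x,x+1)$: in the cases $(\tau_x,\tau_{x+1}) \in \{(1,0),(\ha,0),(1,\ha)\}$ the multi-species process swaps, and the invariant at time $t^-$ pins down enough coordinates of $\eta$ to ensure that $\eta$ also updates consistently, in the sense that sites which remain non-light after the swap continue to match while sites that become $\ha$ in $\tau$ impose no constraint. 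In the remaining bulk configurations either $\tau$ is frozen with $\eta$ also forced to be frozen by the invariant, or $(\tau_x,\tau_{x+1})=(\ha,\ha)$, in which case $\tau$ does nothing and $\eta$ is allowed to swap freely without conflict. Rate-$q$ rings are symmetric, and for each boundary clock one splits into three subcases according to whether the endpoint site holds $0$, $1$, or $\ha$: the first two cases make the two dynamics agree by the invariant, while in the third case $\tau$ is frozen and $\eta$ may flip without breaking the invariant.

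With the invariant established, I set $\textup{Col}_i(t) := \eta_{\lc_i(t)}(t)$, which is càdlàg because both the light-particle positions and $\eta$ are. The color projection of $(\tau,\textup{Col})$ as defined in the lemma then coincides site-by-site with $\eta$, so it has the stated open ASEP law. The only subtle point is the boundary case analysis when a light particle sits at site $1$ or $N$: these are precisely the transitions at which the color of a boundary light must flip to reproduce the reservoir rates of the projected ASEP, but this happens automatically in the coupling, since a $P^\alpha$ ring with $\tau_1 = \ha$ can turn $\eta_1$ from $0$ to $1$ without affecting $\tau$, and similarly for the other three boundary clocks. Beyond this routine check I anticipate no serious obstacle.
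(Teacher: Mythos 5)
Your proposal is correct, and its verification step is in substance the same rate-checking that the paper's proof performs, only packaged in the reverse direction. The paper defines the coloring process intrinsically by its own jump rates (adjacent light particles swap their colors at rate $1$ when the colors are in decreasing order and at rate $q$ otherwise; a light particle at site $1$ has its color refreshed to $1$ at rate $\alpha$ and to $0$ at rate $\gamma$, and symmetrically at site $N$ with rates $\beta$ and $\delta$), and then checks that the resulting projection has the transition rates of a standard open ASEP. You instead run an auxiliary standard open ASEP $(\eta(t))_{t\geq 0}$ on the same Poisson clocks, prove the invariant that $\eta$ and $\tau$ agree at every non-light site, and set $\textup{Col}_i(t)=\eta_{\lc_i(t)}(t)$; your case analysis does close, since in each bulk update the invariant pins $\eta$ down at the non-light endpoint(s) and forces the color to travel with a moving light particle, while updates at a pair of adjacent light sites or at a boundary clock ringing on a light site let $\eta$ move freely without violating the invariant --- which is exactly what reproduces the color-swap and color-refresh rates of the paper's construction. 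What your packaging buys is a pathwise realization: the color projection is literally a standard open ASEP coupled to $(\tau(t))_{t\geq 0}$ through the graphical construction, rather than a process shown to have that law by inspecting generators; what the paper's packaging buys is brevity and a coloring defined directly from the light-particle process. Either version supports the way the lemma is used later (Remark \ref{rem:ProjectionOpenASEP} and the arguments in Sections \ref{sec:EscapeLight} and \ref{sec:MixingTimesLight}).
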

\begin{proof}
Let the initial coloring $\textup{Col}(0) \in \{0,1\}^r$ be chosen according to an arbitrary rule. Then let $\textup{Col}(t)$ remain constant apart from the following exceptions. If $\lc_{i+1}(t)=\lc_i(t)+1$ for some $i\in [r-1]$, then at rate $1$, we swap the values $\textup{Col}_i(t)$ and $\textup{Col}_{i+1}(t)$  if $\textup{Col}_i(t)>\textup{Col}_{i+1}(t)$, and we swap them at rate $q$, otherwise.
Whenever $\lc_1(t) = 1$, then at rate $\alpha$, we replace the current value of $\textup{Col}_1(t)$ by $1$, and at rate $\gamma$ by $0$. Similarly, when $\lc_r(t)=N$, we replace $\textup{Col}_r(t)$ by $0$ at rate $\beta$, and by  $1$ at rate $\delta$. The statement is now immediate by verifying the transition rates. 
\end{proof}

\begin{remark}\label{rem:ProjectionOpenASEP}
From Lemma \ref{lem:LightProjection}, we see that up to the location of the light particles, the stationary measure $\widehat{\mu}_{N,r}$ agrees with the stationary measure $\mu_N$ of an open ASEP without light particles. More precisely, consider the probability measure $\widehat{\mu}_{N,r}$ on the space $\widehat{\Omega}_{N,r}$ of configurations with $r$ many light particles. Then for each configuration $\tau \in \widehat{\Omega}_{N,r}$, one can project its $r$ light particles to either $0$ or $1$, according to a certain law, such that the resulting measure on the space $\{0,1\}^{N}$ is exactly the stationary measure $\mu_N$ of the open ASEP without light particles.
\end{remark}

Next, we record an observation on the stationary measure of the open ASEP in the fan region of the high density phase. %, which we will use later on. %Recall from Lemma \ref{lem:sandwiching} that under conditions \eqref{eq:Sandwich} on the boundary parameters, there exists a coupling such that the respective samples are component-wise ordered with probability $1$.
%For two probability measures $\nu,\nu^{\prime}$ on $\Omega_N$, we say that $\nu$ \textbf{stochastically dominates} $\nu^{\prime}$, and write $\nu \succeq \nu^{\prime}$, whenever there exists a coupling $\mathbf{P}$ between $\nu$ and $\nu^{\prime}$ such that
%\begin{equation}
%    \mathbf{P}( \eta_i \geq \eta^{\prime}_i \text{ for all } i \in \lBr N \rBr ) = 1 , 
%\end{equation} where $\eta \sim \nu$ and $\eta^{\prime} \sim \nu^{\prime}$. 
The following statement can be found for example as \cite[Lemma 2.10]{gantert2023mixing}, and follows from Lemma \ref{lem:sandwiching}, as $\mu_N$ is a Bernoulli-$1/(1+C)$-product measure when $AC=1$.
Recall that $\succeq$ denotes stochastic domination which is defined in \eqref{eq:stochastic domination}.
\begin{lemma}\label{lem:StochasticDomination}
    The stationary measure $\mu_N$ of open ASEP on $\llbracket N\rrbracket$ satisfies 
\begin{equation}
    \textup{Bern}_{N}(c_{\max}) \succeq \mu_N \succeq     \textup{Bern}_{N}(c_{\min}) , 
\end{equation} where $c_{\max}:=\max\left(1/(1+C),A/(1+A)\right)$ and $c_{\min}:=\min\left(1/(1+C),A/(1+A)\right)$, and where $\textup{Bern}_{N}(\rho)$ denotes the Bernoulli-$\rho$-product measure on $\{0,1\}^{N}$ for some $\rho \in [0,1]$. \end{lemma}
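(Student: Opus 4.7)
The approach combines two ingredients already available in the excerpt. First, by Remark \ref{rmk:left and right density} (quoting \cite[Appendix A]{enaud2004large}), along the curve $AC = 1$ the stationary measure $\mu_N$ is exactly the Bernoulli product measure on $\{0,1\}^N$ with density $\rho_\ell = \rho_r = 1/(1+C) = A/(1+A)$. Second, Lemma \ref{lem:sandwiching} provides stochastic monotonicity of $\mu_N$ in the boundary parameters $(\alpha,\beta,\gamma,\delta)$. The strategy is to sandwich $\mu_N$ between two stationary measures whose parameters have been shifted onto the curve $AC = 1$ --- one above and one below in the stochastic order.

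Assume first that $AC \leq 1$, so $c_{\max} = 1/(1+C)$ and $c_{\min} = A/(1+A)$. For the upper bound I would keep $\alpha, \gamma$ (and hence $C$) fixed and replace $(\beta, \delta)$ by $(\beta'', \delta'')$ with $0 < \beta'' \leq \beta$ and $\delta'' \geq \delta$, chosen so that $\phi_+(\beta'', \delta'') = 1/C$. Such a choice exists because $\phi_+$ from \eqref{eq:phi} is strictly decreasing in its first argument (from $+\infty$ at $0^+$ to $0$ at $+\infty$) and strictly increasing in its second argument, while the target value $1/C$ is at least the original value $A$. The associated stationary measure $\mu_N''$ then has $A''C'' = 1$ with the same $C$, hence equals $\mathrm{Bern}_N(1/(1+C)) = \mathrm{Bern}_N(c_{\max})$; Lemma \ref{lem:sandwiching} gives $\mu_N'' \succeq \mu_N$. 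For the lower bound I do the symmetric construction: hold $\beta, \delta$ fixed, and replace $(\alpha, \gamma)$ by $(\alpha', \gamma')$ with $\alpha' \leq \alpha$, $\gamma' \geq \gamma$, and $\phi_+(\alpha', \gamma') = 1/A$. The resulting measure $\mu_N'$ satisfies $A'C' = 1$ with the same $A$, so it equals $\mathrm{Bern}_N(A/(1+A)) = \mathrm{Bern}_N(c_{\min})$, and Lemma \ref{lem:sandwiching} gives $\mu_N \succeq \mu_N'$.

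The case $AC \geq 1$ is entirely analogous with the two roles swapped: for the upper bound I would modify $(\alpha, \gamma)$ to pull $C$ down to $1/A$ (keeping $A$ fixed via $\beta, \delta$), and for the lower bound I would modify $(\beta, \delta)$ to pull $A$ down to $1/C$. In each instance the monotonicity conditions of Lemma \ref{lem:sandwiching} match the required direction of the modification because ``more particles'' corresponds to decreasing $\beta, \gamma$ and increasing $\alpha, \delta$. I do not anticipate any real obstacle; the whole argument is essentially a one-step corollary of the product-measure identity on $AC = 1$ combined with the boundary-rate monotonicity. Degenerate boundary cases --- for instance $C = 0$, which forces $c_{\max} = 1$ --- are handled trivially, since every measure on $\{0,1\}^N$ is dominated by the Dirac mass on the all-ones configuration (and analogously at the other extreme).
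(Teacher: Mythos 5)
Your proposal is correct and follows essentially the same route as the paper: the paper proves this lemma exactly by combining the boundary-parameter monotonicity of Lemma \ref{lem:sandwiching} with the fact that $\mu_N$ is the Bernoulli product measure on the curve $AC=1$ (citing Lemma 2.10 of \cite{gantert2023mixing}), which is precisely your sandwiching construction spelled out in detail.
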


\subsection{The ASEP on the integers}\label{sec:ASEPintegers}

In order to study the open ASEP with light particles, it will be convenient to consider the asymmetric simple exclusion process on the integer lattice with second class particles. This is a Markov process $(\zeta(t))_{t \geq 0}$ on $\{0,1,2 \}^{\Z}$ where we order the spins according to the partial ordering $1 \succeq 2 \succeq 0$.
We refer to a site $x$ be occupied by a \textbf{(first class) particle} at time $t$ if $\zeta_x(t)=1$, by a \textbf{second class particle} if $\zeta_x(t)=2$, and having an \textbf{empty site}, otherwise. In words, first class particles receive the highest priority, then second class particles, and then empty sites. In particular, note that the second class particles have the same transition rules as the light particles in the bulk of the open ASEP with light particles. 

In order to define the ASEP on the integers for different initial conditions simultaneously, we consider the \textbf{basic coupling},  denoted by $\mathbf{P}$.  For each edge $e=\{x,x+1\}$ with $x\in \Z$, we assign independent rate $1$ and rate $q$ Poisson clocks. Whenever the rate $1$ clock rings, we sort the spins at the endpoints along $e$ in increasing order. When the rate $q$ clock rings, we sort them in decreasing order.  The basic coupling has the advantage that we can construct the ASEP with second class particles by taking the component-wise difference between two ASEPs without second class particles. This is in contrast to the open ASEP with light particles, where the light particles are not allowed to leave the segment. More precisely, consider a pair of ordered initial configurations $\zeta^{1}$ and $\zeta^{2}$, i.e.
\begin{equation}
    \zeta^{1}_x \geq \zeta^{2}_x
\end{equation} for all $x\in \mathbb{Z}$. 
We define the \textbf{disagreement process} $(\xi(t))_{t \geq 0}$ by the relation
\begin{equation}\label{def:Disagreement}
    \xi_x(t) = \zeta^1_x(t)\mathds{1}_{\zeta^1_x(t) = \zeta^2_x(t)} + 2 \mathds{1}_{\zeta^1_x(t) \neq  \zeta^2_x(t)}
\end{equation}
for two exclusion processes $(\zeta^{1}(t))_{t \geq 0}$ and $(\zeta^{2}(t))_{t \geq 0}$ according to the basic coupling, started from $\zeta^{1}$ and $\zeta^{2}$, respectively. Note that $(\xi(t))_{t \geq 0}$ has indeed the law of an ASEP on the integers with second class particles. The next lemma shows that this construction can be extended to a coupling between the ASEP on the integers with second class particles and the open ASEP with light particles.   

\begin{lemma}\label{lem:CouplingOpenZ}
Consider an interval $\lBr a,b \rBr \subset \lBr N \rBr$, and fix initial configurations $\tau$ and $\zeta$ for open ASEP with light particles $(\tau(t))_{t \geq 0}$ and ASEP on the integers with second class particles $(\zeta(t))_{t \geq 0}$, respectively. We assume that the configurations $\tau$ and $\zeta$ agree on the sites $\lBr a, b \rBr$, i.e.,
%\begin{equation}
%\begin{split}
% \zeta_x &= 2  \text{ if and only if } \tau_x= \ha   \\
% \zeta_x &= \tau_x  \text{ otherwise,}
%\end{split}
%\end{equation}  
$\zeta_x=\tau_x$
for all  $x\in \lBr a, b \rBr$. Then there exists a coupling $\tilde{\mathbf{P}}$ and constants $c_0,C_0>0$ such that for all $t \geq 0$
\begin{equation}\label{eq:baer}
\tilde{\mathbf{P}}\left( \tau_x(s) = \zeta_x(s) \text{ for all } s \in [0,t] \text{ and } x \in [a+2t,b-2t] \right) \geq 1- C_0 \exp(-c_0 t ) . 
\end{equation}
\end{lemma}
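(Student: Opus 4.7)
The plan is to use the standard finite-speed-of-propagation property of the ASEP under the basic coupling, combined with a Chernoff-type concentration estimate for sums of exponential waiting times.

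First I would construct $\tilde{\mathbf{P}}$ as follows. On every edge $\{x,x+1\}$ with $a \le x \le b-1$, attach a single pair of Poisson clocks at rates $1$ and $q$, and when such a clock rings update both $\tau(\cdot)$ and $\zeta(\cdot)$ via the same increasing (resp.\ decreasing) sorting rule. All remaining dynamics---the boundary reservoirs of the open ASEP, the integer-ASEP clocks on the edges $\{a-1,a\}$ and $\{b,b+1\}$, and every edge outside $\lBr a,b \rBr$---use independent clocks. The crucial observation is that since $\tau$ and $\zeta$ initially agree on $\lBr a,b \rBr$, a shared-clock ring on an edge $\{x,x+1\} \subset \lBr a,b-1 \rBr$ cannot introduce a disagreement: when the two processes agree on both endpoints just before the ring, the same rule applied to the same inputs yields agreement afterwards. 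Therefore every disagreement inside $\lBr a,b \rBr$ can be traced either to an unshared-clock ring at one of the endpoints $a$ or $b$, or to propagation of such a seed along shared edges.

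Next I would bound the leftmost invasion front. Let $X(t)$ be the largest $y \in \lBr a,b \rBr$ such that a disagreement traceable to site $a$ exists at some $s \le t$ (with $X(t) = a-1$ if none). Then $X(t) - (a-1)$ is stochastically dominated by a pure-birth process that is restarted at $0$ at each ring of the two unshared clocks on $\{a-1,a\}$ (total rate $\le 2(1+q)$) and advances by $+1$ at rate $1+q$ from its current position. By memorylessness of Poisson processes, the arrival time of a single attempt at site $a + \lceil 2t \rceil$ is at least $\sum_{i=1}^{\lceil 2t \rceil} \xi_i$, where $\xi_1,\xi_2,\ldots$ are i.i.d.\ $\mathrm{Exp}(1+q)$. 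A Chernoff bound, optimized at $\lambda = 1-q$, yields
\[
\tilde{\mathbf{P}}\!\left(\sum_{i=1}^{\lceil 2t \rceil} \xi_i \le t\right) \;\le\; \exp\!\bigl(-c(q)\,t\bigr), \qquad c(q) \;=\; 2\ln\!\bigl(2/(1+q)\bigr) - (1-q) \;>\; 0 \text{ for } q \in [0,1).
\]
The expected number of restarts in $[0,t]$ is at most $2(1+q)t$, so a union bound gives $\tilde{\mathbf{P}}(X(t) \ge a + 2t) \le C_0 e^{-c_0 t}$ for suitable $c_0,C_0>0$. A symmetric estimate for the right-invasion front and a final union bound yield \eqref{eq:baer}.

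The main obstacle is the rigorous stochastic domination. The true disagreement region can be complicated: multiple disagreements may coexist in the interior, be created by different sources at $a$ and $b$, and be annihilated by interior rings, so a priori the front could ``leap'' rather than advance one step at a time. To rule this out, one couples $\xi_i$ to the waiting time, measured from the moment the front first reaches $a+i-1$, until the next ring of the shared clock on the edge $\{a+i-1,a+i\}$; any earlier advance past $a+i$ would require this specific clock to fire first, so the $\xi_i$ upper bound the true front-advance times.
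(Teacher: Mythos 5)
Your proposal is correct and follows essentially the same route as the paper's proof: share the bulk Poisson clocks between the two processes, observe that a disagreement must enter $\lBr a,b\rBr$ at an endpoint and can only advance one site per ring of the next specific edge, so reaching $[a+2t,b-2t]$ by time $t$ forces a time-ordered chain of $\approx 2t$ rings whose successive waiting times are i.i.d.\ $\mathrm{Exp}(1+q)$, and then apply a Chernoff/large-deviation bound (valid precisely because $1+q<2$), which is the "standard large deviation principle" the paper invokes. The only cosmetic differences are that you share clocks only on the interior edges of $\lBr a,b\rBr$ rather than on all of $\lBr N-1\rBr$, and your "restarted pure-birth" domination phrasing and union bound over seed rings are unnecessary (and the restart description is stated in the wrong direction), but the stopping-time argument in your last paragraph is exactly the rigorous version used implicitly in the paper, so nothing essential is missing.
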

\begin{proof} For the coupling $\tilde{\mathbf{P}}$, we use the same rate $1$, respectively rate $q$, Poisson clocks between pairs of sites $\{x,x+1\}$ for some $x\in \lBr N-1 \rBr$ in both processes $(\tau(t))_{t \geq 0}$ and $(\zeta(t))_{t \geq 0}$. Along all other edges on $\Z$ for $(\zeta(t))_{t \geq 0}$, as well as for the entering and exiting of particles in the process $(\tau(t))_{t \geq 0}$, we use independent Poisson clocks with corresponding rates.
 Note that  following the same rules as under the basic coupling, 
 the two processes can only disagree on interval $[a+2t,b-2t]$ at some time in $[0,t]$ when there exist times $0 \leq t_1\leq \dots \leq t_m$ for $m= \lfloor 2t \rfloor$ such that for all $x\in \lBr m \rBr$, either the Poisson clock on $\{a+x-1,a+x\}$ or on $\{b-x,b-x+1\}$ rings at time $t_x$. 
 Therefore the probability that they disagree on $[a+2t,b-2t]$ can be bounded by the probability of $Z_1+\dots+Z_m<t$, where $Z_i$ for $i\in\llbracket m\rrbracket$ are i.i.d. exponential random variables with parameter $1+q<2$.
 By the standard large deviation principle of the exponential distribution, we conclude the proof of \eqref{eq:baer}. 
\end{proof}
\begin{remark}
    With a slight abuse of notation, we will refer to the coupling constructed in Lemma~\ref{lem:CouplingOpenZ} above also as the basic coupling. 
\end{remark}
 
 Next, we aim to use the basic coupling  to compare the positions of (finitely many) second class particles in two ASEPs on the integers. More precisely, we consider a pair of disagreement processes $(\xi(t))_{t \geq 0}$ and $(\xi^{\prime}(t))_{t \geq 0}$, where the four underlying exclusion processes on $\Z$ are all coupled according to the basic coupling $\mathbf{P}$, and where the initial condition $\xi$ and $\xi^{\prime}$ are chosen as follows.
Let $S,S^{\prime} \subseteq \Z$ with $|S|,|S^{\prime}| < \infty$, and let $\zeta \in \{0,1\}^{\mathbb{Z}}$ be chosen in an arbitrary way. Then we set
\begin{equation}\label{eq:condi1}
\xi_x=\begin{cases} \zeta_x & \text{ if } x\notin S \\
2 & \text{ if } x\in S
\end{cases}
\end{equation} as well as
\begin{equation}\label{eq:condi2}
\xi^{\prime}_x=\begin{cases} \zeta_x & \text{ if } x\notin S^{\prime} \\
2 & \text{ if } x\in S^{\prime} . 
\end{cases}
\end{equation}
We enumerate the second class particles from left to right in $(\xi(t))_{t \geq 0}$ by $(Z^{i,1}(t))_{t \geq 0}$ for $i\in \lBr |S|\rBr$, and by   $(Z^{i,2}(t) )_{t \geq 0}$ for $i\in \lBr |S^{\prime}| \rBr$ in $(\xi^{\prime}(t))_{t \geq 0}$. Furthermore, we say that a second class particle $(Z^{i,j}(t) )$ is of type $0,1,2$ at time $t$, and write $\ell(Z^{i,j}(t) )\in\{0,1,2\}$, depending on whether it is matched with an empty site, first, or second class particle under the basic coupling. Note that while the number of second class particles in both disagreement processes is preserved, their types can vary over time. We provide the following result which compares the position of the second class particles in $(\xi(t))_{t \geq 0}$ and $(\xi^{\prime}(t))_{t \geq 0}$, started from $\xi$ and $\xi^{\prime}$, respectively. 

\begin{lemma}\label{lem:CouplingPositions}
Assume that $s^{\prime}>s$ for all $s \in S$ and $s^{\prime} \in S^{\prime}$. Moreover, let 
\begin{equation}\label{eq:DominationAssumption}
\begin{split}
 \left| \left\{  i \in \lBr |S| \rBr \, \colon \, \ell\left(Z^{i,1}(0) \right)=1 \right\} \right| &\leq   \left| \left\{  i \in \lBr |S^{\prime}| \rBr \, \colon \, \ell\left(Z^{i,2}(0) \right)=1 \right\} \right| \\
 \left| \left\{  i \in \lBr |S| \rBr \, \colon \, \ell\left(Z^{i,1}(0) \right)=0 \right\} \right| &\leq   \left| \left\{  i \in \lBr |S^{\prime}| \rBr \, \colon \, \ell\left(Z^{i,2}(0) \right)=0 \right\} \right|
\end{split}
\end{equation}
Then for all $t \geq 0$, we have that almost surely
\begin{equation}\label{eq:DominationStatement}
 Z^{|S|,1}(t)  \leq   Z^{|S^{\prime}|,2}(t)  . 
\end{equation}
Similarly, suppose that $s > s^{\prime}$ for all $s \in S$ and $s^{\prime} \in S^{\prime}$ as well as \eqref{eq:DominationAssumption} holds.
Then for all $t \geq 0$, we have that almost surely
\begin{equation}\label{eq:Dom2}
 Z^{1,2}(t)  \leq   Z^{1,1}(t)  . 
\end{equation}
\end{lemma}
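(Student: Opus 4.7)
The plan is to establish \eqref{eq:DominationStatement} via the basic coupling $\mathbf{P}$ combined with an inductive verification of a monotone invariant at every Poisson-clock firing; \eqref{eq:Dom2} will then follow by an analogous argument with the roles of ``leftmost'' and ``rightmost'' (and the rates $1$ and $q$) interchanged. Under $\mathbf{P}$ the pair $(\xi(t), \xi'(t))$ is a deterministic function of a common family of rate-$1$ and rate-$q$ Poisson clocks on the bonds $\{x,x+1\}$ of $\mathbb{Z}$, so between firings both configurations are constant and the task reduces to a bond-by-bond case analysis.

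The candidate invariant I propose is the pointwise comparison
\[
G_\xi(x) \leq G_{\xi'}(x) \quad \text{for every } x \in \mathbb{Z}, \qquad G_\zeta(x) := \#\{y \geq x : \zeta_y = 2\},
\]
which is equivalent to \eqref{eq:DominationStatement} because both processes contain only finitely many second class particles. At $t=0$, the strict ordering $\max S < \min S'$ forces $S \cap S' = \emptyset$, so no initial particle has type $2$; condition \eqref{eq:DominationAssumption} then gives $|S|\leq |S'|$, and a short case distinction in $x$ (comparing with the thresholds $\min S$, $\max S$, $\min S'$, $\max S'$) verifies the invariant.

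For preservation under a firing at the bond $(x,x+1)$, only $G_\xi(x+1)$ and $G_{\xi'}(x+1)$ can change, each by $\pm 1$, according to whether a $2$ crosses the bond in the respective process. Enumerating the local configurations identifies a single potentially problematic pairing, namely the rate-$1$ firing with $(\xi_x, \xi_{x+1}) = (2, 0)$ combined with $(\xi'_x, \xi'_{x+1}) = (1, 2)$, together with its rate-$q$ mirror image $(\xi_x, \xi_{x+1}) = (2, 1)$ and $(\xi'_x, \xi'_{x+1}) = (0, 2)$. The main obstacle of the proof is to rule out this pairing: the required local pattern encodes opposite-sign ``crossings'' between the two pairs of underlying single-species ASEPs whose differences produce $\xi$ and $\xi'$ via \eqref{def:Disagreement}. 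I plan to exclude it by tracking how these crossings evolve from the initial layout, in which every positive crossing sits on $S$ and every negative one on $S'$, so that a positive crossing always lies to the left of every negative one, and arguing that under $\mathbf{P}$ a positive and a negative crossing can only annihilate upon meeting rather than cross each other.

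If this direct route turns out to be too delicate, the fallback plan is to strengthen the invariant by a companion inequality that also counts first-class particles in the same right-tail fashion, thereby providing the slack needed to absorb the problematic firing; the required book-keeping of type-changing events is then compatible with the full force of assumption \eqref{eq:DominationAssumption}. Finally, \eqref{eq:Dom2} follows by applying the same argument after the spatial reflection $x \mapsto -x$, which exchanges the roles of ``leftmost'' and ``rightmost'' and swaps the rates $1$ and $q$ in the case analysis, so that the initial configuration under consideration satisfies the mirror image of the hypotheses of \eqref{eq:DominationStatement}.
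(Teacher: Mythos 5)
Your overall architecture (bond-by-bond induction under the basic coupling on a right-tail monotone quantity, with the per-site pairs $(\xi_x,\xi'_x)$ restricted to the seven values excluding $(1,0),(0,1)$) is in the spirit of the paper's proof, and you correctly isolate the only firings that could break the counting invariant $G_\xi\leq G_{\xi'}$: the rate-$1$ ring with $(\xi_x,\xi_{x+1})=(2,0)$, $(\xi'_x,\xi'_{x+1})=(1,2)$ (per-site pairs $(2,1)$ at $x$, $(0,2)$ at $x+1$) and its rate-$q$ mirror. However, the mechanism you propose to rule these out is false. It is not true that a positive discrepancy (a site where $\xi=2$, $\xi'\neq 2$) and a negative one ($\xi'=2$, $\xi\neq 2$) can only annihilate upon meeting: only \emph{matched} types annihilate, namely $(2,0)$ with $(0,2)$ and $(2,1)$ with $(1,2)$, whereas mismatched types cross each other. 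Indeed, in your problematic configuration itself, the rate-$1$ update turns $(2,1)$ at $x$ and $(0,2)$ at $x+1$ into $(0,2)$ at $x$ and $(2,1)$ at $x+1$ (the $\xi$-second-class particle hops right over a hole while in $\xi'$ the first-class particle exchanges with the second-class particle), so the two crossings swap positions without annihilating; similarly $(2,0)$ and $(1,2)$ swap at rate $q$. Hence the property ``every positive crossing lies to the left of every negative one'' is not preserved by the dynamics, and even if it were, it would not exclude the problematic firing, since there the positive crossing already sits to the left of the negative one; what excludes it is type-resolved counting, not left-right ordering of signs.

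Your fallback is the right instinct but, as stated, still does not close the induction: adding only the companion inequality for first-class particles amounts to requiring $\#\{(1,2)\text{-pairs in }[y,\infty)\}\geq\#\{(2,1)\text{-pairs in }[y,\infty)\}$ for all $y$, and one can write down configurations (e.g.\ $(2,1),(0,2),(2,0),(1,2)$ at four consecutive sites) satisfying both your $G$-inequality and this companion everywhere, yet for which the rate-$1$ firing on the first bond violates the $G$-inequality; the system only becomes stable once you also track the companion for holes, i.e.\ $\#\{(0,2)\}\geq\#\{(2,0)\}$ on every right tail. With \emph{both} type-resolved tail inequalities (which is exactly where the two separate lines of \eqref{eq:DominationAssumption} and the hypothesis $\max S<\min S'$ enter at time zero), every update is handled: a positive crossing of type $j$ at $x$ forces the corresponding tail difference at $x+1$ to be at least $1$, so its rightward move (or a matched annihilation, or the mismatched swap, which decrements both type-$0$ and type-$1$ differences at $x+1$ simultaneously) cannot create a negative value, and \eqref{eq:DominationStatement} follows by evaluating the inequalities at the rightmost $\xi$-second-class particle. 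This corrected version is essentially the paper's argument, which keeps the two families separate via the stronger bookkeeping that all $(1,2)$-pairs remain to the right of all $(2,1)$-pairs and all $(0,2)$-pairs to the right of all $(2,0)$-pairs, with matched pairs annihilating; your reduction of \eqref{eq:Dom2} by reflection is fine, but as it stands the core step of your proof rests on an incorrect dynamical claim and therefore has a genuine gap.
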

\begin{proof}
Under the basic coupling, we first describe the dynamics of the particle pairs. We write $(i,j)$ to mean that $\xi_x(t)=i$ and $\xi^{\prime}_x(t)=j$ for some fixed $x\in \Z$ and $t \geq 0$. Observe that when starting from \eqref{eq:condi1} and \eqref{eq:condi2}, the particle pairs $(1,0)$ and $(0,1)$ can never occur. The possible pairs are
\begin{equation}\label{eq:AllConfigurationsPossible}
 (1,1) , \  (2,2) , \ (0,0) , \ 
(2,1) , \  (2,0) , \  (1,2) , \ (0,2) .
\end{equation}
We construct a natural partial ordering $\succeq_{o}$ of particle pairs, see Figure \ref{fig:SecondClassHierachy} for an illustration. Note that any two pairs can be compared, except for the pairs $(2,1)$ and $(1,2)$, as well as $(2,0)$ and $(0,2)$. 
%Observe that by construction, the two disagreement processes only take values in
%\begin{equation}\label{eq:AllConfigurationsPossible}
% (1,1) , \  (2,2) , \ (0,0) , \ 
%(2,1) , \  (2,0) , \  (1,2) , \ (0,2) , 
%\end{equation}
%where $(i,j)$ means that $\xi_x(t)=i$ and $\xi^{\prime}_x(t)=j$ for some fixed $x\in \Z$ and $t \geq 0$.
%In particular, note that the configurations $(1,0)$ and $(0,1)$ can not occur when starting from \eqref{eq:condi1} and \eqref{eq:condi2}. This implies a natural partial ordering $\succeq_{o}$ of configurations, apart are the pairs $(2,1)$ and $(1,2)$, as well as $(2,0)$ and $(0,2)$, which we will treat separately; see also Figure \ref{fig:SecondClassHierachy} for an illustration.
%This ordering $\succeq_o$ is respected by the dynamics under the basic coupling, i.e.\ 
The dynamics of the two ASEPs (under the basic coupling) can be described using this partial ordering $\succeq_o$. Specifically,
along every edge $e$ we sort the endpoints of $e$ according to $\succeq_o$ in increasing order at rate $1$, and in decreasing order at rate $q$. Now we consider the exceptional cases of $(2,1)$ and $(1,2)$, as well as $(2,0)$ and $(0,2)$. Whenever an edge with endpoint spins $(2,1)$ and $(1,2)$ receives an update, we obtain $(1,1)$ and $(2,2)$ as the resulting endpoints (in increasing order at rate $1$ and decreasing order at rate $q$). %In particular, this creates a new pair $(2,2)$. 
A similar statement applies to the pairs $(2,0)$ and $(0,2)$.  

Consider the following properties of the system at time $t\geq0$.
\begin{enumerate}
    \item [$\bullet$] There are at least as many pairs $(0,2)$ as $(2,0)$, and the particle pairs $(0,2)$ are all to the right of all the pairs $(2,0)$.
    \item [$\bullet$] There are at least as many pairs $(1,2)$ as $(2,1)$, and the particle pairs $(1,2)$ are all to the right of all the pairs $(2,1)$.
    \item [$\bullet$] The rightmost pair which is not of type $(1,1)$ or $(0,0)$ are within $\{ (2,2), (1,2), (0,2) \}$.
\end{enumerate}
By our assumption \eqref{eq:DominationAssumption} and $s^{\prime}>s$ for all $s \in S$ and $s^{\prime} \in S^{\prime}$, the above properties hold at time $t=0$. One can check that these  properties are preserved over time, since every time a pair $(0,2)$ (respectively $(1,2)$) gets swapped with a pair $(2,0)$ (respectively $(2,1)$), they get annihilated. The third property at time $t\geq0$ implies \eqref{eq:DominationStatement}. The argument applies mutatis mutandis for \eqref{eq:Dom2}. 
%Note that by our assumption \eqref{eq:DominationAssumption} and $s^{\prime}>s$ for all $s \in S$ and $s^{\prime} \in S^{\prime}$, there are initially at least as many pairs $(0,2)$ as $(2,0)$, and the particle pairs $(0,2)$ are all to the right of all the pairs $(2,0)$ and $(2,1)$. Similarly, there are initially at least as many pairs $(1,2)$ as $(2,1)$, and the particle pairs $(1,2)$ are all to the right of all the pairs $(2,0)$ and $(2,1)$. Now consider the process at an arbitrary time $t\geq0$, and let $y$ be the (random) position of the right-most pair which is not of type $(1,1)$ or $(0,0)$. Our key observation is that $(\xi_y(t),\xi^{\prime}_y(t)) \in \{ (2,2), (1,2), (0,2) \}$ $\mathbf{P}$-almost surely, since every time a pair $(0,2)$ (respectively $(1,2)$) is swapped with a pair $(2,0)$ (respectively $(2,1)$), they get annihilated and turn into $(0,0)$ and $(2,2)$ (respectively $(0,0)$ and $(2,2)$). 
%This observation allows us to conclude \eqref{eq:DominationStatement}. The argument applies mutatis mutandis for \eqref{eq:Dom2}. 
\end{proof}

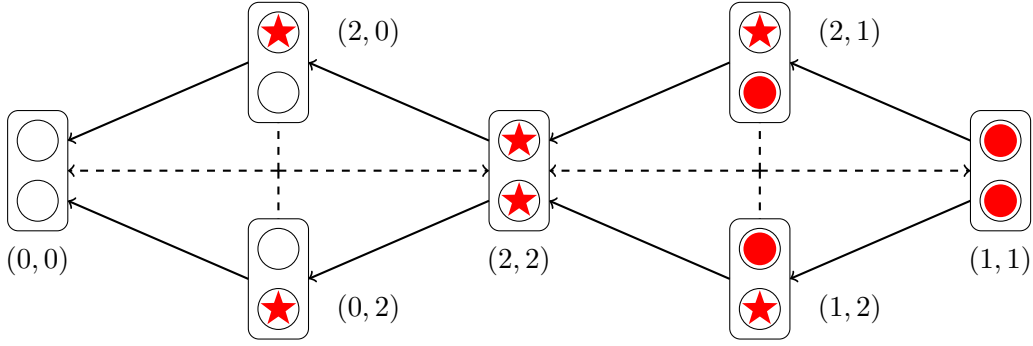
\begin{figure}
\centering
\begin{tikzpicture}[scale=0.8]

\draw[rounded corners] (-4, 0) rectangle (-3, 2);

\draw[rounded corners] (0, -1.8) rectangle (1, 0.2);

%\node (X) at (8.5,-2.3) {type $3$};
\node (X) at (2,-1.3) {$(0,2)$};

\node (X) at (2,3.3) {$(2,0)$};

\draw[rounded corners] (0, 1.8) rectangle (1, 3.8);

%\draw[rounded corners] (0, 0) rectangle (1, 2);

\node (X) at (4.5-8,-0.5) {$(0,0)$};

\draw[rounded corners] (4, 0) rectangle (5, 2);

\node (X) at (4.5,-0.5) {$(2,2)$};

\draw[rounded corners] (8, -1.8) rectangle (9, 0.2);

%\node (X) at (8.5,-2.3) {type $3$};
\node (X) at (10,-1.3) {$(1,2)$};

\draw[rounded corners] (8, 1.8) rectangle (9, 3.8);

%\node (X) at (8.5,1.3) {type $4$};
\node (X) at (10,3.3) {$(2,1)$};

\draw[rounded corners] (12, 0) rectangle (13, 2);

\node (X) at (12.5,-0.5) {$(1,1)$};

\draw[thick,->] (8-8, -0.8) -> (5-8, 0.5);
\draw[thick,->] (8-8, 2.8) -> (5-8, 1.5);
\draw[thick,->] (12-8, 1.5) -> (9-8, 2.8);
\draw[thick,->] (12-8, 0.5) -> (9-8,-0.8);

\draw[thick,dashed] (8.5-8,1) -- (8.5-8,1.8);
\draw[thick,dashed] (8.5-8,1) -- (8.5-8,0.2);
\draw[thick, dashed, ->] (8.5-8,1) -- (5-8, 1);
\draw[thick, dashed, ->] (8.5-8,1) -- (12-8, 1);

\draw[thick,->] (8, -0.8) -> (5, 0.5);
\draw[thick,->] (8, 2.8) -> (5, 1.5);
\draw[thick,->] (12, 1.5) -> (9, 2.8);
\draw[thick,->] (12, 0.5) -> (9,-0.8);

\draw[thick,dashed] (8.5,1) -- (8.5,1.8);
\draw[thick,dashed] (8.5,1) -- (8.5,0.2);
\draw[thick, dashed, ->] (8.5,1) -- (5, 1);
\draw[thick, dashed, ->] (8.5,1) -- (12, 1);

\node[shape=circle,scale=1.5,draw] (E) at (0.5-4,0.5){} ; 
%\node[shape=star,star points=5,star point ratio=2.5,fill=red,scale=0.55] (Y1) at (0.5-4,0.5) {};

\node[shape=circle,scale=1.5,draw] (E) at (0.5-4,1.5){} ; 
%\node[shape=circle,scale=1.2,fill=red] (Y1) at (0.5,1.5) {};

\node[shape=circle,scale=1.5,draw] (E) at (0.5,2.3){} ; 
\node[shape=circle,scale=1.5,draw] (E) at (0.5,3.3){} ; 
\node[shape=star,star points=5,star point ratio=2.5,fill=red,scale=0.55] (Y1) at (0.5,3.3) {};

\node[shape=circle,scale=1.5,draw] (E) at (0.5,-1.3){} ; 
\node[shape=star,star points=5,star point ratio=2.5,fill=red,scale=0.55] (Y1) at (0.5,-1.3) {};
\node[shape=circle,scale=1.5,draw] (E) at (0.5,-0.3){} ;

\node[shape=circle,scale=1.5,draw] (E) at (4.5,1.5){} ; 
\node[shape=star,star points=5,star point ratio=2.5,fill=red,scale=0.55] (Y1) at (4.5,1.5) {};

\node[shape=circle,scale=1.5,draw] (E) at (4.5,0.5){} ; 
\node[shape=star,star points=5,star point ratio=2.5,fill=red,scale=0.55] (Y1) at (4.5,0.5) {};

\node[shape=circle,scale=1.5,draw] (E) at (8.5,-1.3){} ; 
\node[shape=star,star points=5,star point ratio=2.5,fill=red,scale=0.55] (Y1) at (8.5,-1.3) {};

\node[shape=circle,scale=1.5,draw] (E) at (8.5,-0.3){} ; 
\node[shape=circle,scale=1.2,fill=red] (Y1) at (8.5,-0.3) {};

\node[shape=circle,scale=1.5,draw] (E) at (8.5,2.3){} ; 
\node[shape=circle,scale=1.2,fill=red] (Y1) at (8.5,2.3) {};

\node[shape=circle,scale=1.5,draw] (E) at (8.5,3.3){} ; 
\node[shape=star,star points=5,star point ratio=2.5,fill=red,scale=0.55] (Y1) at (8.5,3.3) {};

\node[shape=circle,scale=1.5,draw] (E) at (12.5,1.5){} ; 
\node[shape=circle,scale=1.2,fill=red] (Y1) at (12.5,1.5) {};

\node[shape=circle,scale=1.5,draw] (E) at (12.5,0.5){} ; 
\node[shape=circle,scale=1.2,fill=red] (Y1) at (12.5,0.5) {};
	\end{tikzpicture}	
\caption{\label{fig:SecondClassHierachy}
Illustration of the particle pairs and the partial ordering between them that are involved in the proof of Lemma \ref{lem:CouplingPositions}. The partial ordering $\succeq_o$ is indicated by the directions of the (solid) arrows. The vertical dashed line between $(2,0)$ and $(0,2)$ indicate that they are not comparable under the partial ordering, and the horizontal dashed arrows indicate that when they interact with each other, they get annihilated and become $(0,0)$ and $(2,2)$. A similar statement applies to the vertical dashed line between $(2,1)$ and $(1,2)$.
%Visualization of the different types of pairs of particle configurations involved in the proof of Lemma \ref{lem:CouplingPositions}, and the partial order $\succeq_o$ between them indicated by the direction of the arrows. The dashed line indicate the result when two configurations which are not comparable according to the partial order interact with each other. 
}
 \end{figure}

\subsection{Moderate deviations for second class particles}\label{sec:Moderate}

In the following, we discuss moderate deviation estimates for the positions of second class particles in the ASEP on the integers started from a Bernoulli-$\rho$-product measure. We will be interested in two asymmetric simple exclusion process on the integers according to the basic coupling, started from a pair of configurations, which differ only at a finite number of positions. At all other positions, they have the law of a Bernoulli-$\rho$-product measure. More precisely, let $A \subseteq \Z$ with $|A|< \infty$. Let $(\zeta^{1}(t))_{t \geq 0}$ and $(\zeta^{2}(t))_{t \geq 0}$ be two asymmetric simple exclusion processes on the integers 
such that
\begin{equation}\label{eq:BernoulliLaw1}
\zeta_x^{1}(0)=\zeta^{2}_x(0) \sim \textup{Bern}(\rho)
\end{equation} independently for all $x\in \Z \setminus A$, where $\textup{Bern}(\rho)$ denotes the Bernoulli-$\rho$ measure for some $\rho \in [0,1]$. For all $x\in A$, we set
\begin{equation}\label{eq:BernoulliLaw2}
\zeta_x^{1}(0)=1-\zeta^{2}_x(0) =1. 
\end{equation} 
We consider the disagreement process $(\xi(t))_{t \geq 0}$ defined by \eqref{def:Disagreement}.
%We refer to $(\xi(t))_{t \geq 0}$, defined as in \eqref{def:Disagreement}, as the respective disagreement process. 
Note that the number of second class particles is preserved. 
We first consider the case when $A=\{ 0 \}$, where we denote by $(Z(t))_{t \geq 0}$  
%When $A=\{ 0 \}$, we refer to $(Z(t))_{t \geq 0}$ as 
the unique position of the second class particle in $(\xi(t))_{t \geq 0}$. We have the following moderate deviation estimate for the position of the second class particle, which is a simple consequence of a recent result by Landon and Sosoe \cite{landon2023tail}. To simplify the notation, we will write  
\begin{equation}\label{def:Intervalxyz}
\mathcal{I}(t,y,z) := \big[ z + (1-q)(1-2\rho)t - y -1, z + (1-q)(1-2\rho)t + y +1  \big] . 
\end{equation} Intuitively, a second class particle in a Bernoulli-$\rho$-product measure travels at a linear speed 
\begin{equation}\label{def:kappa}
    \kappa := (1-q)(1-2\rho) , 
\end{equation}  and has fluctuations of order $t^{2/3}$ at time $t$. 
\begin{theorem}[c.f.\ Theorem~2.5 in \cite{landon2023tail}]\label{thm:Moderate}
Consider the disagreement process $(\xi(t))_{t \geq 0}$ of two ASEPs with initial conditions given by \eqref{eq:BernoulliLaw1} and \eqref{eq:BernoulliLaw2} for $A=\{0\}$, in which case there
%Consider the disagreement process $(\xi(t))_{t \geq 0}$, defined as in \eqref{eq:BernoulliLaw1} and \eqref{eq:BernoulliLaw2}, with 
a single  second class particle $(Z(t))_{t \geq 0}$. Then there exists some $c_0,C_0>0$, depending only on $\rho$ and $q$, such that for all $y,T>0$ sufficiently large, with $\log(T) \leq y \leq  T^{1/3}$,
\begin{equation}\label{eq:vbrewa}
\P\left( \exists  t\in [0,T]  \, \colon \, Z(t) \notin \mathcal{I}(t,yT^{2/3},0) \right) \leq C_0 \exp( -c_0 y^{3} ) . 
\end{equation}
\end{theorem}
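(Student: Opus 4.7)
The plan is to upgrade the fixed-time moderate deviation bound of \cite[Theorem~2.5]{landon2023tail} --- which asserts that for fixed $t$ and $\log t \leq y' \leq t^{1/3}$ one has $\P(|Z(t) - \kappa t| \geq y' t^{2/3}) \leq C\exp(-c (y')^3)$ --- to the running-maximum estimate \eqref{eq:vbrewa} via a time-discretization combined with a union bound. The key observation is that the second class particle moves at bounded speed, so between discrete time points its displacement can be controlled by a Poisson tail.

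First, partition $[0,T]$ into $\lceil T/s \rceil$ subintervals of length $s := \delta\, y\, T^{2/3}$ with endpoints $t_k = ks$, for a small absolute constant $\delta > 0$ to be chosen. At each grid point $t_k \geq y T^{2/3}$, apply \cite[Theorem~2.5]{landon2023tail} with parameter $y_k' := (y/3)(T/t_k)^{2/3}$; its hypotheses $\log t_k \leq y_k' \leq t_k^{1/3}$ hold in this range using $y \geq \log T$, yielding $|Z(t_k) - \kappa t_k| \leq (y/3)\, T^{2/3}$ with failure probability $\leq C\exp(-c y^3)$. For the $O(1/\delta)$ small values of $t_k < y T^{2/3}$, the same bound follows from a Chernoff estimate on the total number of jumps of $Z$ by time $y T^{2/3}$. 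On each subinterval $[t_k,t_{k+1}]$, the displacement $\sup_{t \in [t_k,t_{k+1}]}|Z(t) - Z(t_k)|$ is stochastically dominated by a Poisson random variable with mean $(1+q)s$, since under the basic coupling the second class particle jumps at total rate at most $1+q < 2$; Chernoff then gives that this supremum is at most $yT^{2/3}/3$ with failure probability $\leq \exp(-c y T^{2/3}) \leq \exp(-c y^3)$, where the last inequality uses $y \leq T^{1/3}$.

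On the intersection of both success events, for any $t \in [t_k, t_{k+1}]$ one obtains
\[
|Z(t) - \kappa t| \;\leq\; |Z(t_k) - \kappa t_k| + |Z(t) - Z(t_k)| + |\kappa|\, s \;\leq\; y\, T^{2/3},
\]
provided $\delta$ is chosen so that $|\kappa|\delta \leq 1/3$. A union bound over the $T/s = O(T^{1/3}/y)$ subintervals yields \eqref{eq:vbrewa}, the polynomial prefactor $T^{1/3}$ being absorbed into $\exp(-c y^3)$ because $y \geq \log T$ forces $\log(T^{1/3}) \leq y \leq y^3$. The main technical point is tuning the scale $s$ so that the three sources of error --- fixed-time moderate deviation at $t_k$, intra-interval displacement, and drift accumulation $|\kappa| s$ --- each contribute at most $\exp(-c y^3)$ and their combination preserves the exponent cubic in $y$. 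Translating the precise formulation of \cite[Theorem~2.5]{landon2023tail} into the present conventions is a minor bookkeeping step (hence the ``c.f.''), and is the only place where integrable-probability input is invoked.
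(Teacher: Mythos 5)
Your overall scheme is the same as the paper's: apply the fixed-time Landon--Sosoe estimate on a time grid, control the oscillation between grid points by a Poisson/jump-count bound, and absorb the polynomial union-bound factor using $y\geq\log T$. However, as written there is a genuine gap at the ``small'' grid times, i.e.\ exactly in the regime where the target deviation is comparable to the elapsed time. For $t_k$ of order $yT^{2/3}$ (say $t_k\in[\tfrac{y}{12}T^{2/3},\,yT^{2/3}]$), a Chernoff estimate on the total number of jumps of $Z$ only yields $|Z(t_k)|\lesssim (1+q)t_k\approx 2yT^{2/3}$, and hence $|Z(t_k)-\kappa t_k|\lesssim 3yT^{2/3}$; it cannot give the bound $|Z(t_k)-\kappa t_k|\leq \tfrac{y}{3}T^{2/3}$ you claim, because the typical total motion over that time already exceeds the target window. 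The statement you need there is that the second class particle tracks its drift $\kappa t_k$ to within $o(t_k)$, which is precisely a moderate-deviation statement and not a consequence of jump counting. On the other hand, your application of the fixed-time bound is restricted to $t_k\geq yT^{2/3}$, and even there the hypothesis check is not complete: the cited estimate holds for deviations $w s^{2/3}$ with $w\leq(1-q)s^{1/3}$ (and, under your own reading, $w\geq\log s$), and with $y_k'=(y/3)(T/t_k)^{2/3}$ one has $y_k'\leq t_k^{1/3}$ but not necessarily $y_k'\leq(1-q)t_k^{1/3}$ (take $q$ close to $1$ and $y$ close to $T^{1/3}$, even at $t_k=T$), nor $y_k'\geq\log t_k$ (you only get $y_k'\geq y/3\geq(\log T)/3$). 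So a band of grid times is covered by neither of your two tools.

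The paper closes exactly this gap by two extra steps that your argument is missing: (i) the fixed-time estimate is first extended to all $w\leq 4s^{1/3}$ by monotonicity of the event in the interval width (the change of variables $\widetilde w=(1-q)w/4$ in \eqref{eq:ModerateIntegers2}), which removes the $(1-q)$ restriction at the cost of a constant in the exponent; and (ii) for $w\geq 4s^{1/3}$, i.e.\ when the deviation exceeds $4s$, the crude speed bound \eqref{eq:ModerateIntegers3} gives $\exp(-c\,ws^{2/3})=\exp(-c\,yT^{2/3})\leq\exp(-cy^3)$ using $y\leq T^{1/3}$. These two regimes dovetail with no gap, whereas in your write-up the jump-count argument is invoked for deviations \emph{smaller} than the elapsed time, where it is vacuous. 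The fix is mechanical once you add step (i), so the architecture of your proof survives, but as stated the treatment of $t_k<yT^{2/3}$ (and the verification of the hypotheses of the cited theorem) does not go through.
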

\begin{proof}
Theorem~2.5 in~\cite{landon2023tail} states the moderate deviation estimate as in Theorem \ref{thm:Moderate} for fixed times $s$ and $w \leq (1-q)s^{1/3}$, i.e.\ there exist constants $c_1,C_1>0$ such that
\begin{equation}\label{eq:ModerateIntegers1}
\P\left( Z(s) \notin \mathcal{I}(s,ws^{2/3},0) \right) \leq C_1 \exp( -c_1 w^3 ) . 
\end{equation} By a change of variables $\widetilde{w}=(1-q)w/4$ in \eqref{eq:ModerateIntegers1},
we see  that for all $w \in [(1-q)s^{1/3},4s^{1/3}]$, 
\begin{equation}\label{eq:ModerateIntegers2}
\P\left( Z(s) \notin \mathcal{I}(s,ws^{2/3},0) \right) \leq\P\left(Z(s) \notin \mathcal{I} (s,\widetilde{w}s^{2/3},0 ) \right)\leq C_1 \exp( - c_1^{\prime} w^3 ),
\end{equation}
where $c_1^{\prime}=c_1 ((1-q)/4)^3$.
For $w \geq 4s^{1/3}$,
a similar argument as Lemma \ref{lem:CouplingOpenZ} yields that there exist constants $c_2,C_2>0$ depending only on $\rho$ and $q$, such that for all $s>0$,
\begin{equation}\label{eq:ModerateIntegers3}
    \P\left( Z(s) \notin \mathcal{I}(s,ws^{2/3},0) \right) \leq C_2 \exp( -c_2 ws^{2/3}  ) . 
\end{equation}
Combining \eqref{eq:ModerateIntegers1}, \eqref{eq:ModerateIntegers2}, \eqref{eq:ModerateIntegers3}, there exist  $c_3,C_3>0$ such that for any $ s\leq\lceil T\rceil$ and $y\leq T^{1/3}$,
\begin{equation}\label{eq:bawret}
    \P\left(  Z(s) \notin \mathcal{I}(s,yT^{2/3},0) \right) \leq C_3  \exp( -c_3 y^{3} ).
\end{equation}
We explain the above bound as follows. Setting $ws^{2/3}=yT^{2/3}$ gives $w=y(T/s)^{2/3}\geq y/2$. In the case that $w\leq 4s^{1/3}$, the bound \eqref{eq:bawret} follows easily from \eqref{eq:ModerateIntegers1} and \eqref{eq:ModerateIntegers2}. When $w \geq 4s^{1/3}$, we have 
$ws^{2/3}=yT^{2/3}\geq y^3$ since $y\leq T^{1/3}$, hence \eqref{eq:bawret} follows from \eqref{eq:ModerateIntegers3}.

Using a union bound of \eqref{eq:bawret} and then changing $y$ to $y/2$, we have
\begin{equation}\label{eq:ModerateIntegers100}
    \P\left( Z(s) \in \mathcal{I}(s,yT^{2/3}/2,0)  \mbox{ for all } s\in\llbracket\lceil T\rceil\rrbracket \right) \geq 1-C_3 \lceil T\rceil \exp( -c_3 y^{3}/8 )
\end{equation}
On the other hand, by the same arguments as in Lemma \ref{lem:CouplingOpenZ},  there exist constants $c_4,C_4>0$ such that for any interval $[a,b]$, $x\in\N$ and $s\in \N$,
\begin{equation}\label{eq:ModerateIntegers4}
\P( Z(t) \in \lBr a-x,b+x \rBr \text{ for all } t \in [s,s+1] \, | \, Z(s) \in [a,b] ) \geq 1- C_4\exp(-c_4 x ). 
\end{equation}  
We use \eqref{eq:ModerateIntegers4} for $[a,b]= \mathcal{I}(s,yT^{2/3}/2,0)$ and $x=\lfloor y^3/2\rfloor \leq yT^{2/3}/2 $. 
Combining with \eqref{eq:ModerateIntegers100},
\[
\P\left( Z(t) \in \mathcal{I}(t,yT^{2/3},0)  \mbox{ for all } t\in[0,T] \right) \geq 
1-C_3 \lceil T\rceil \exp( -c_3 y^{3}/8 )-C_4\lceil T\rceil\exp(-c_4 \lfloor y^3/2\rfloor ). 
\] In view of $y \geq \log(T)$, we conclude the proof of \eqref{eq:vbrewa}.
%Since $y \geq \log(T)$, combining \eqref{eq:ModerateIntegers100} and \eqref{eq:ModerateIntegers4} with $[a,b]= \mathcal{I}(s,yT^{2/3},0)$ as well as $x=y^3 \leq yT^{2/3}$, and a union bound over $s\in \lBr \lceil T \rceil \rBr$, we conclude.
\end{proof}

%Using Lemma \ref{lem:CouplingPositions} and Theorem \ref{thm:Moderate}, we have the following consequence on the positions of a finite collection of second class particles in a Bernoulli-$\rho$-product measure. 
The next result establishes a bound for the positions of a finite collection of second class particles for the open ASEP starting with a Bernoulli-$\rho$-product measure. 
%Recall the interval $\mathcal{I}(x,y,z)$ from  \eqref{def:Intervalxyz} and the speed $\kappa$ from \ref{def:kappa}.
\begin{lemma}\label{lem:GuideParticles}
Let $\tilde{\zeta}$ have a Bernoulli-$\rho$-product law for some $\rho \in (1/2,1)$, and recall $\kappa=\kappa(\rho,q)<0$. 
Let $S \subseteq \mathbb{Z}$ be a finite set which is allowed to depend on the random configuration $\tilde{\zeta}$. Denote $|S|=r$. Let $(\zeta(t))_{t \geq 0}$ be an ASEP on the integers with initial configuration given by $\zeta_x(0)=\tilde{\zeta}_x$ for $x\notin S$ and $\zeta_x(0)=2$ for $x \in S$. For all $i \in \lBr r \rBr$, let $(Z^{i}(t) )_{t \geq 0}$ denote the position of the $i$-$\textup{th}$ second class particle in $(\zeta(t))_{t \geq 0}$, counted from left to right, and assume that
\begin{equation}
 Z^{r}(0)  \leq 0 . 
\end{equation} 
Then there exist constants $c=c(\rho,r)>0$ and $t_0=t_0(\rho,r)>0$ such that for all $t \geq t_0$
\begin{equation}\label{eq:ModerateDeviationFinite}
\P\left( \sup_{s\in [0,t]} Z^{r}(s)  \leq |\kappa| t  \text{ and } Z^{r}(t)  \leq \frac{1}{2}\kappa t \right) \geq 1 - \exp(-c t^{1/8}) . 
\end{equation} 
\end{lemma}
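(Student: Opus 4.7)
The strategy is to reduce the multi-particle bound on $Z^r$ to the single second-class-particle case (Theorem \ref{thm:Moderate}) via Lemma \ref{lem:CouplingPositions}. The key challenge is that the type domination condition \eqref{eq:DominationAssumption} in Lemma \ref{lem:CouplingPositions} requires the auxiliary system to have at least as many type-$0$ and type-$1$ second-class particles as the original, which precludes a direct reduction to a single-particle system when $r \geq 3$.

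First, I construct an auxiliary system on $\mathbb{Z}$ with $M = M(r,\rho)$ second-class particles initially placed at positions $S' = \{1, 2, \ldots, M\}$, sharing the Bernoulli-$\rho$ background $\tilde{\zeta}$ outside of $S'$. Choosing $M$ to be a sufficiently large multiple of $r$ (depending on $\rho$), Chernoff's inequality for Bernoulli sums ensures that with probability at least $1 - \exp(-cM)$, the restriction $\tilde{\zeta}|_{S'}$ contains both at least $r$ ones and at least $r$ zeros, thereby deterministically satisfying \eqref{eq:DominationAssumption}. Since $S \subseteq (-\infty, 0]$ and $S' \subset \mathbb{Z}_{>0}$, the strict position ordering is automatic. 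On this event, Lemma \ref{lem:CouplingPositions} yields $Z^r(s) \leq Z^{M,\mathrm{aux}}(s)$ almost surely for all $s \geq 0$, where $Z^{M,\mathrm{aux}}$ denotes the rightmost second-class particle in the auxiliary system.

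Next, I dominate $Z^{M,\mathrm{aux}}(s)$ by the trajectory of a single second-class particle $Y(s)$ in a Bernoulli-$\rho$ background, started at position $M$. This can be achieved by an iterative application of Lemma \ref{lem:CouplingPositions} (successively peeling off the leftmost particles with an $O(1)$ additive correction at each step), or by a direct analysis showing that the initial packed cluster dissipates within a constant time $t_1 = t_1(M, \rho)$, after which the rightmost particle's dynamics mirrors that of a single second-class particle. Applying Theorem \ref{thm:Moderate} with $T = t$ and $y = t^{1/8}$ (so that $\log t \leq y \leq t^{1/3}$ for $t$ sufficiently large) yields $Y(s) \in [M + \kappa s - t^{19/24} - 1, \, M + \kappa s + t^{19/24} + 1]$ for all $s \in [0, t]$, with probability at least $1 - C_0 \exp(-c_0 t^{3/8})$.

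Since $M$ is a constant depending only on $r$ and $\rho$, and $|\kappa|t$ dominates $M + t^{19/24}$ for $t \geq t_0(r,\rho)$, combining the above estimates yields $\sup_{s \in [0,t]} Z^r(s) \leq |\kappa|t$ and $Z^r(t) \leq \kappa t / 2$ on the intersection of the respective good events. A union bound shows that the total failure probability is at most $\exp(-cM) + C_0 \exp(-c_0 t^{3/8}) \leq \exp(-ct^{1/8})$ after adjusting constants. The main obstacle lies in the second step: rigorously justifying the comparison between the multi-particle and single-particle trajectories requires careful control of the interactions among the $M$ second-class particles in the auxiliary system, possibly via iterative coupling constructions or the microscopic concavity coupling of Bal\'azs and Sepp\"al\"ainen mentioned in the introduction.
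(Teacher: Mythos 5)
Your first step (comparing the original $r$ particles with an auxiliary family placed strictly to their right via Lemma \ref{lem:CouplingPositions}, with the type condition \eqref{eq:DominationAssumption} guaranteed by a Chernoff bound on the background) is the right idea and matches the paper's strategy, but your two concrete choices create genuine gaps. First, you take the auxiliary set $S'$ of \emph{constant} size $M=M(r,\rho)$. Then the probability that $\tilde{\zeta}|_{S'}$ contains at least $r$ ones and $r$ zeros fails with probability of order $\exp(-cM)$, which is a constant independent of $t$; no adjustment of constants makes a fixed positive number smaller than $\exp(-ct^{1/8})$ for all large $t$, so the claimed bound \eqref{eq:ModerateDeviationFinite} cannot follow from your union bound. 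This decay in $t$ is not cosmetic: the lemma is applied later at a sequence of dyadic scales and the error probabilities are summed, so a constant failure probability would break the downstream arguments. The paper avoids this by letting the number of auxiliary particles grow with $t$, namely $|L|=rt^{1/8}$, so the Chernoff failure probability is itself $\exp(-ct^{1/8})$.

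Second, your reduction from the packed auxiliary cluster $S'=\{1,\dots,M\}$ to a single second class particle is exactly the step you flag as the ``main obstacle,'' and it is not justified by any tool in the paper: Lemma \ref{lem:CouplingPositions} cannot be applied with the single-particle system playing the role of the dominating system, because \eqref{eq:DominationAssumption} would require the one-particle system to carry at least as many type-$0$ and type-$1$ particles as the $M$-particle system, which is impossible for $M\geq 3$; the ``cluster dissipates in constant time, after which the rightmost particle behaves like a lone second class particle'' claim is unproven (second class particles in ASEP interact over unbounded time horizons); and Theorem \ref{thm:BSCoupling} compares systems with the \emph{same} set of second class particle positions and ordered backgrounds, not systems with different numbers of second class particles. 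The paper sidesteps this entirely by spreading the $rt^{1/8}$ auxiliary particles at spacing $t^{3/4}$: by Theorem \ref{thm:Moderate} (with $y\asymp t^{1/24}$) each stays in a window of width $O(t^{2/3+1/24})$ around its characteristic, these windows are disjoint, so on that event the particles never meet and, under the basic coupling, each multi-particle trajectory \emph{coincides} with the corresponding single-particle trajectory; the rightmost auxiliary particle starts at $\approx rt^{7/8}=o(|\kappa|t)$, which still yields $\sup_{s\le t}Z^r(s)\le |\kappa|t$ and $Z^r(t)\le \kappa t/2$ for $t\ge t_0$. To repair your proof you would need both modifications: an auxiliary family whose size grows like $t^{1/8}$, and a placement that lets you replace the unproven multi-to-one comparison by the non-interaction argument.
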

\begin{proof}
Consider the initial configuration $\zeta^{\ast}$ for the process $(\zeta(t))_{t \geq 0}$, where we place the second class particles at locations $L:=\{ j t^{3/4} \, \colon \, j \in \lBr r t^{1/8}\rBr \}$. On all other sites, we let $\zeta^{\ast}$ agree with the configuration~$\tilde{\zeta}$. Since $\rho \in (1/2,1)$, note that with probability at least $1-\exp(-c_0 t^{1/8})$ for some $c_0=c_0(\rho,r)>0$, we get that assumption \eqref{eq:DominationAssumption} from Lemma \ref{lem:CouplingPositions}
holds. Thus, it suffices to show \eqref{eq:ModerateDeviationFinite} with respect to the initial configuration $\zeta^{\ast}$ with $|L| $ many the second class particles initially at locations $L$. 
To do so, consider $|L|$ many asymmetric simple exclusion processes $(\zeta^{(j)}(t))_{t \geq 0}$, where we only keep the second class particle at $jt^{3/4}$. We denote its position by $(\tilde{Z}^{j}(t) )_{t \geq 0}$ for $j\in \lBr |L| \rBr$ and choose the remaining configuration according to $\tilde{\zeta}$. We define the event %$\mathcal{A}=\bigcap_{j=1}^{|L|} \mathcal{A}_j$ with
\begin{equation}\label{eq:ModerateIndividual}
\mathcal{A}=\bigcap_{j=1}^{|L|} \mathcal{A}_j\quad\mbox{where}\quad\mathcal{A}_j := \left\{  \tilde{Z}^{j}(s)  \in \mathcal{I}(s,t^{2/3+1/24},jt^{3/4}) \text{ for all } s \in [0,t] \right\} .
\end{equation} 
Note that for some constant $c_1>0$ and for all $t>0$ sufficiently large,
\begin{equation}\label{eq:TheeventA}
\P (\mathcal{A}) \geq 1-\exp(-c_1 t^{1/8})
\end{equation} holds by a union bound and Theorem \ref{thm:Moderate}. Observe that under the basic coupling $\mathbf{P}$, we get that
\begin{equation}
\mathbf{P}(  Z^{j}(s) =\tilde{Z}^{j}(s)   \text{ for all } j \in \lBr rt^{1/8} \rBr \text{ and } s \in [0,t] \, | \, \mathcal{A} ) = 1, 
\end{equation} 
since, on the event $\mathcal{A}$, the second class particles maintain positive distance away from each other over time interval $[0,t]$.
Using the definition of the events $\mathcal{A}_j$ in \eqref{eq:ModerateIndividual}, we see that on the event $\mathcal{A}$,
\begin{equation}
    \sup_{s\in [0,t]} Z^{r}(s)  \leq C_1 t^{7/8}  \text{ and } Z^{r}(t) \leq \kappa t + C_1 t^{7/8}
\end{equation} for sufficiently large constant $C_1$. 
Choosing $t_0$ sufficiently large, this yields the desired result.
\end{proof}

We will make use of the following so-called microscopic concavity coupling of two ASEPs with second class particles, introduced by Balazs and Seppäläinen in \cite{balazs2009fluctuation}. This coupling preserves the order of second-class particles for component-wise ordered initial configurations.

\begin{theorem}[c.f.\ Theorem 3.1 in \cite{balazs2009fluctuation}]\label{thm:BSCoupling}
  Fix some finite set $S \subseteq \Z$. Let $(\zeta^1(t))_{t \geq 0}$ and $(\zeta^{2}(t))_{t \geq 0}$ be two ASEPs on the integers with initial configurations $\zeta^1$ and $\zeta^{2}$ such that
  \begin{equation}
      \zeta^1_x \geq \zeta^{2}_x 
  \end{equation} with $\zeta^1_x, \zeta^{2}_x \in \{0,1\}$ for all $x\notin S$, and $ \zeta^1_x = \zeta^{2}_x=2$ for all $x \in S$.
Enumerate the second class particles from left to right in $(\zeta^1(t))_{t \geq 0}$ by $(Z^{i,1}(t) )_{t \geq 0}$ for $i\in \lBr |S| \rBr$, and by $(Z^{i,2}(t) )_{t \geq 0}$ for $i\in \lBr |S^{\prime}| \rBr$ in $(\zeta^{2}(t))_{t \geq 0}$. Then there exists a coupling $\bar{\mathbf{P}}$ such that
\begin{equation}
     \bar{\mathbf{P}} \left(  Z^{i,2}(t) \geq Z^{i,1}(t)  \text{ for all } i \in \lBr |S| \rBr \text{ and }  t\geq 0 \,  \big|  \, \zeta^1(0)=\zeta^1 \text{ and } \zeta^{2}(0) = \zeta^{2} \right) =1 .
\end{equation}
\end{theorem}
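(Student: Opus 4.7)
The plan is to construct a coupling $\bar{\mathbf{P}}$ that preserves the labeled left-to-right ordering of the second class particles between the two systems; the naive basic coupling (as in Lemma~\ref{lem:CouplingPositions}) is \emph{not} sufficient here, as simultaneous swaps at adjacent sites can invert the ranks of the second class particles in the two systems.

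The starting point is to decompose each of $\zeta^{j}$, $j\in\{1,2\}$, into two coupled $\{0,1\}$-valued ASEPs: set
\begin{equation*}
\zeta^{j,+}_x := \mathds{1}\{\zeta^{j}_x \geq 1\}, \qquad \zeta^{j,-}_x := \mathds{1}\{\zeta^{j}_x = 1\}.
\end{equation*}
Then the positions of the second class particles in $\zeta^{j}$ coincide with the disagreement set of the pair $(\zeta^{j,+}, \zeta^{j,-})$, and the hypotheses give, at time zero, the chain of componentwise orderings $\zeta^{1,+} \geq \zeta^{2,+} \geq \zeta^{2,-}$ and $\zeta^{1,+} \geq \zeta^{1,-} \geq \zeta^{2,-}$. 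I would then construct a joint coupling of these four $\{0,1\}$-valued ASEPs with the following three properties: (a) each marginal pair $(\zeta^{j,+}, \zeta^{j,-})$ evolves under the standard basic coupling, which gives $\zeta^{j}$ the correct ASEP-with-second-class-particles law; (b) all four pairwise componentwise orderings above are preserved for every $t \geq 0$; and most importantly (c) for every $x \in \mathbb{Z}$ and $t \geq 0$,
\begin{equation*}
\sum_{y \leq x}\lb\zeta^{1,+}_y(t) - \zeta^{1,-}_y(t)\rb \; \geq \; \sum_{y \leq x} \lb\zeta^{2,+}_y(t) - \zeta^{2,-}_y(t)\rb.
\end{equation*}
Property (c) is exactly the desired labeled ordering $Z^{i,1}(t) \leq Z^{i,2}(t)$ for every $i$, rewritten as an inequality on the cumulative counts of second class particles weakly to the left of $x$.

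The coupling is built edge-by-edge. At each edge $\{x,x+1\}$ one enumerates the finitely many 4-tuples of spins consistent with the four-way ordering, and specifies how the rate-$1$ and rate-$q$ Poisson clocks of the four processes are coordinated. In most configurations simply sharing all clocks across the four processes is harmless; in the critical configurations where a fully shared update would drop the cumulative difference in (c) below zero at some $x$, one redirects the clocks of the pair $(\zeta^{1,+},\zeta^{1,-})$ relative to those of $(\zeta^{2,+},\zeta^{2,-})$. The main obstacle is precisely this case analysis: one must simultaneously verify (a), (b), and (c) at every critical configuration, which is combinatorially delicate. The asymmetry $q < 1$ is used essentially, providing drift that accommodates the redirection without altering the marginal laws. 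This is exactly the microscopic concavity coupling of Balazs and Seppäläinen; once it is in place, (c) together with the Markov property yields the theorem.
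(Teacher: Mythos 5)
The paper does not prove this statement at all: it is imported verbatim as Theorem 3.1 of \cite{balazs2009fluctuation} (the microscopic concavity coupling of Bal\'azs and Sepp\"al\"ainen), and the only accompanying content is the remark that the coupling must differ from the basic coupling because the latter preserves the order of second class particles only when $q=0$. Your preliminary reductions are sound and consistent with that remark: writing $\zeta^{j,+}_x=\mathds{1}\{\zeta^j_x\geq 1\}$, $\zeta^{j,-}_x=\mathds{1}\{\zeta^j_x=1\}$, noting that the number of second class particles is conserved in each system, and observing that the labeled ordering $Z^{i,1}(t)\leq Z^{i,2}(t)$ for all $i$ is equivalent to the domination of the counting functions in your property (c) is a correct and clean reformulation.

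However, the actual content of the theorem is the existence of a joint construction satisfying (a), (b) and (c) simultaneously, and this is precisely what your proposal does not deliver. The ``edge-by-edge enumeration with redirected clocks'' is not an argument but a description of what would have to be checked; you concede the case analysis is delicate and do not exhibit the coupling table, verify that the redirection leaves the marginal laws intact, or even establish that a scheme of that particular form (both pairs $(\zeta^{j,+},\zeta^{j,-})$ evolving under the basic coupling, with clocks shared or swapped across the two systems) can exist --- the Bal\'azs--Sepp\"al\"ainen construction is in fact organized differently, around label-preserving dynamics for the second class particles, and the assertion that the asymmetry $q<1$ ``provides drift that accommodates the redirection'' is not a proof step. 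Your final sentence then invokes the microscopic concavity coupling of \cite{balazs2009fluctuation} as a black box, which is exactly what the paper does; so either the statement is cited (as the paper intends, and which is perfectly acceptable) or the coupling must be constructed in full, and the proposal as written does neither. As a standalone proof it therefore has a genuine gap at its central step.
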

\begin{remark}
    As pointed out in \cite{balazs2009fluctuation}, the coupling $\bar{\mathbf{P}}$ has to be different from the basic coupling, since the latter only preserves the order of second class particles when $q=0$.
\end{remark}

The next result %is a consequence of Theorem \ref{thm:Moderate} and Theorem \ref{thm:BSCoupling} to 
compares the positions of the light particles in the open ASEP with different initial configurations, under the assumptions  
\begin{equation}\label{eq:RestatedAssumptions}
    A > 1  \quad \text{and} \quad AC \leq 1 .
\end{equation}
This will be our key tool to control the motion of light particles in the upcoming subsections. 
\begin{lemma}\label{lem:PushForward}
Assume that \eqref{eq:RestatedAssumptions} holds, and let $\eta \sim \mu_N$. Consider an open ASEP $(\tau(t))_{t \geq 0}$ with $r$ many light particles at locations $(\lc_i(t))_{t \geq 0}$ for all $i \in \lBr r \rBr$. Then there exists some $t_0=t_0(A,C,r)$ and $c=c(A,C,r)$ such that for all $t \geq t_0$, and all $0 < s_1 < s_2 < \dots <s_r$ with some $s_r \in \lBr N-4t \rBr$, 
\begin{equation*}
    \P\left( \lc_r(t) \leq \max\left(s_r - \frac{|\kappa|}{2} t,4t\right) \, \Big| \,  \lc_i(0) = s_i \, \mbox{ }\forall i \in \lBr r \rBr ,\mbox{ } \,  \tau_x=\eta_x \, \mbox{ } \forall x \notin \{s_i\} \right) \geq 1  - \exp(-c t^{1/8})  ,
\end{equation*} provided that $N$ is sufficiently large.
\end{lemma}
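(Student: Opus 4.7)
The plan is to control the rightmost light particle $\lc_r(t)$ by comparing it to the rightmost of $r$ second class particles in an ASEP on $\Z$ started from a Bernoulli product measure, where the moderate deviation bound of Lemma \ref{lem:GuideParticles} applies. We layer three couplings: the stochastic domination of $\mu_N$ by a Bernoulli product measure (Lemma \ref{lem:StochasticDomination}), the microscopic concavity coupling (Theorem \ref{thm:BSCoupling}), and the basic-coupling comparison with an ASEP on $\Z$ (Lemma \ref{lem:CouplingOpenZ}).

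Concretely, let $\rho := A/(1+A) > 1/2$ (the latter since $A>1$). Under our assumptions $A>1$ and $AC\leq 1$, a direct computation gives $\rho = c_{\min}$ in Lemma \ref{lem:StochasticDomination}. Introduce two auxiliary ASEPs on $\Z$, each with second class particles initially placed at $S := \{s_1, \dots, s_r\}$: the process $(\hat\zeta(s))_{s \geq 0}$, starting from $\textup{Bern}(\rho)$-product measure on $\Z \setminus S$; and the process $(\tilde\zeta(s))_{s \geq 0}$, starting from $\tilde\zeta_x = \tau_x(0)$ for $x \in \lBr N \rBr$ and $\textup{Bern}(\rho)$-product on $\Z \setminus \lBr N \rBr$. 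By $\mu_N \succeq \textup{Bern}_N(\rho)$, we can couple $\tilde\zeta \succeq \hat\zeta$ componentwise (with equality on $S$), and Theorem \ref{thm:BSCoupling} then yields $\tilde Z^r(s) \leq \hat Z^r(s)$ for all $s \geq 0$, where $\tilde Z^r$, $\hat Z^r$ denote the respective rightmost second class particles. We simultaneously couple $\tilde\zeta$ with $\tau$ via Lemma \ref{lem:CouplingOpenZ} with $\lBr a,b\rBr = \lBr 1,N\rBr$, so that $\tau(s)$ and $\tilde\zeta(s)$ agree on $\lBr 1+2t, N-2t\rBr$ for all $s \in [0,t]$, with probability at least $1 - C_0\exp(-c_0 t)$. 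Applying Lemma \ref{lem:GuideParticles} to $(\hat\zeta(s))$, after shifting the origin to $s_r$, gives with probability at least $1-\exp(-c_1 t^{1/8})$ that $\hat Z^r(t) \leq s_r - |\kappa|t/2$. Finally, a standard Poisson tail argument, using that the rightmost light particle has right-jump rate at most $1$ and that $s_r \leq N-4t$, yields $\lc_r(s) \leq s_r + 2t \leq N-2t$ for all $s \in [0,t]$, with probability at least $1 - \exp(-c_2 t)$.

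On the intersection $\mathcal{E}$ of these good events, whose total probability is at least $1-\exp(-ct^{1/8})$ for some $c = c(A,C,r)>0$, we argue by contradiction. Suppose $\lc_r(t) > \max(s_r - |\kappa|t/2, 4t)$. Then, for $t \geq t_0$ with $t_0$ sufficiently large, $\lc_r(t) > 4t > 2t+1$, and we also have $\lc_r(t) \leq N-2t$, so $\lc_r(t)$ lies in the coupling interval $\lBr 1+2t, N-2t\rBr$. By the matching of configurations on this interval under Lemma \ref{lem:CouplingOpenZ}, the site $\lc_r(t)$ carries a light particle in $\tau(t)$ and hence a second class particle in $\tilde\zeta(t)$; thus $\lc_r(t) = \tilde Z^{j}(t)$ for some $j \in \lBr r \rBr$, and so
\[
\lc_r(t) \;=\; \tilde Z^j(t) \;\leq\; \tilde Z^r(t) \;\leq\; \hat Z^r(t) \;\leq\; s_r - \tfrac{|\kappa|}{2} t,
\]
contradicting the assumption. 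This yields the claim.

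The main technical subtlety is constructing a single joint coupling of $\tau$, $\tilde\zeta$, and $\hat\zeta$ that is simultaneously compatible with the microscopic concavity coupling (between $\hat\zeta$ and $\tilde\zeta$, which differs from the basic coupling) and the basic-coupling-style comparison of Lemma \ref{lem:CouplingOpenZ} (between $\tilde\zeta$ and $\tau$). We will handle this by first fixing the microscopic concavity coupling of $(\hat\zeta,\tilde\zeta)$, thereby in particular specifying the marginal law of $\tilde\zeta$ as an ASEP on $\Z$ with prescribed Poisson-clock realizations on each edge, and then coupling $\tau$ to $\tilde\zeta$ by reusing those clocks on edges in $\lBr 1,N-1\rBr$ while using independent boundary clocks at sites $1$ and $N$ with rates $\alpha,\beta,\gamma,\delta$.
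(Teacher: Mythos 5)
Your proposal is correct and follows essentially the same route as the paper: stochastic domination of $\mu_N$ by $\textup{Bern}_N(A/(1+A))$ (Lemma \ref{lem:StochasticDomination}), the microscopic concavity coupling (Theorem \ref{thm:BSCoupling}), the finite-speed coupling window of Lemma \ref{lem:CouplingOpenZ}, and Lemma \ref{lem:GuideParticles} shifted by $s_r$; your contradiction argument at the single site $\lc_1(t),\dots,\lc_r(t)$ together with the Poisson tail bound keeping $\lc_r(s)\leq N-2t$ is a clean substitute for the paper's preliminary reduction to $s_1\geq 2t$. The only imprecision is the final coupling-compatibility fix: the Balazs--Sepp\"al\"ainen coupling is not given by a per-edge graphical construction, so one cannot literally ``reuse its Poisson clocks''; instead either glue the two pair-couplings along the common marginal $\tilde\zeta$ (conditional independence on the Polish path space), or note that the microscopic concavity step is only needed to stochastically dominate the law of $\tilde Z^r(t)$ by that of $\hat Z^r(t)$, so no triple coupling is required.
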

\begin{proof}
First, we claim that without loss of generality, we can assume that $s_1 \geq 2t$. To see this, note that the event $\lc_r(t) \leq \max\left(s_r -  |\kappa| t/2,4t\right)$ does with probability at least $1-\exp(-c_1 t)$ for some constant $c_1>0$ not depend on the choice of $(\tau_x(0))_{x \leq 2t}$. More precisely, let  $(\tau^{\prime}(t))_{t \geq 0}$ be an open ASEP with some initial locations $(s_i^{\prime})_{i \in \lBr r \rBr}$ for the light particles such that $s_1^{\prime}\geq 2t$, and  $s_i^{\prime}=s_i$ whenever $s_i>2t+r$. On the remaining sites $x$, we let $\tau_x(0)=\eta_x$.
Then a similar argument as in Lemma \ref{lem:CouplingOpenZ} shows that under the basic coupling $\mathbf{P}$ that the corresponding open ASEPs satisfy
\begin{equation}
    \mathbf{P}\left(  \tau_x(s)= \tau^{\prime}_x(s) \text{ for all } x \geq 4t  \text{ and } s\in[0,t]\right) \geq 1- \exp(-c_1 t)
\end{equation} for some $c_1>0$ and $t\geq t_0$ sufficiently large. Since the event $\big\{ \lc_r(t) \leq \max\left(s_r - |\kappa| t/2,4t\right) \big\}$ is measurable with respect to $(\tau_x(t))_{x \geq 4t}$, this yields the claim.

Now we let $\tilde{\zeta} \in \{0,1\}^{\Z}$ be chosen according to a Bernoulli-$\rho$-product law, where we set $\rho=A/(A+1)$. Using Lemma \ref{lem:CouplingOpenZ}, we can couple the open ASEP with light particles to an ASEP $(\zeta(t))_{t \geq 0}$ on the integers with initial configuration given by
\begin{equation}
    \zeta_x(0)= \begin{cases} 2 & \text{ if } x=s_i \text{ for some } i \in \lBr r \rBr \\
        \eta_x  & \text{ if }  x \in \lBr N \rBr \setminus  \{ s_i \, \colon \, i \in \lBr r \rBr \} \\
        \tilde{\zeta}_x & \text{ if } x \leq 0 \text{ or } x > N 
    \end{cases}
\end{equation} for all $x \in \Z$, such that, with probability at least $1-\exp(-c_2 t)$,
\begin{equation}
  \tau_y(s)=\zeta_y(s)  
\end{equation} for all $y \in [2t,N-2t]$ and $s\in [0,t]$.
Since $AC \leq 1$, using Lemma \ref{lem:StochasticDomination}, we see that $\mu_N$ is stochastically dominating a Bernoulli-$\rho$-product measure, i.e.\ there exists a coupling such that
\begin{equation}
    \eta_x \geq \tilde{\zeta}_x
\end{equation} for all $x \in \lBr N \rBr$. By the microscopic concavity coupling stated as Theorem \ref{thm:BSCoupling}, we only need to bound the location of the $r$-th second class particle for open ASEP on the integers at time $t$, started from the initial condition with second class particles $0 < s_1 < s_2 < \dots <s_r$ within the Bernoulli-$\rho$-product measure. This bound follows from Lemma \ref{lem:GuideParticles}  shifted rightwards by $s_r$.
\end{proof}

\subsection{Hitting times for light particles}\label{sec:EscapeLight}

In this subsection, we argue that for all sufficiently large times, the light particles for the open ASEP in the high density phase will be located at a finite window around the left boundary. Using Lemma~\ref{lem:PushForward}, we first describe a way to ensure that the light particles $(\lc_i(t))_{t \geq 0}$ for $i\in \lBr r \rBr$ are sufficiently far away from the right boundary with positive probability. Throughout this subsection, we assume that $A>1$ and $AC \leq 1$ holds. 

\begin{proposition}\label{pro:EscapeLight}
Assume \eqref{eq:RestatedAssumptions}.
Let $\eta \sim \mu_N$ and let $0<s_1<s_2<\dots <s_r \leq N$ for some finite $r\in \N$. Consider the open ASEP $(\tau(t))_{t \geq 0}$ with $r$ many light particles and initial configuration
\begin{equation}
\tau_x(0) = \begin{cases}
\ha  & \text{ if } x = s_i \text{ for some } i \in \lBr r \rBr  \\
\eta_x & \text{ otherwise } 
\end{cases}
\end{equation} for all $x\in \lBr N \rBr$.
Then there exists a constant $C_0 =C_0 (A,C,r)>0$ such that for every $\theta \in (0,1)$
\begin{equation}\label{eq:FinalGoalEscape}
\P\left(  \lc_r( C_0 N^{\theta}\log(N)) \leq N - N^{\theta}  \right) \geq 1 - \frac{1}{N}
\end{equation} for all $N$ sufficiently large. 
\end{proposition}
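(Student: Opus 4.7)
The plan is a multi-scale iteration of Lemma \ref{lem:PushForward}. Set $T := c_1 N^\theta$ and $M := \lceil c_2 \log N \rceil$, with constants $c_1, c_2 > 0$ chosen so that $M |\kappa| T/2 \geq 2 N^\theta$ and $MT = C_0 N^\theta \log N$. Define checkpoint times $T_k := kT$ and, for $k \in \lBr 0, M \rBr$, the events
\begin{equation*}
\mathcal{E}_k := \left\{ \lc_r(T_k) \leq \max\left(s_r - \tfrac{k |\kappa| T}{2},\, 4T\right) \right\}.
\end{equation*}
I would inductively show that $\P(\mathcal{E}_k) \geq 1 - k \exp(-c T^{1/8})$, so that on $\mathcal{E}_M$ one has $\lc_r(T_M) \leq \max(s_r - 2N^\theta,\, 4T) \leq N - N^\theta$ for $N$ large. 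A union bound over the $M = O(\log N)$ iterations then yields a total failure probability of at most $O(\log N) \cdot \exp(-c T^{1/8})$, which is much smaller than $1/N$ since $T^{1/8} = \Theta(N^{\theta/8})$.

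For the inductive step $\mathcal{E}_k \Rightarrow \mathcal{E}_{k+1}$, I would invoke a slight generalization of Lemma \ref{lem:PushForward} on the time interval $[T_k, T_{k+1}]$. The proof of Lemma \ref{lem:PushForward} only uses the stochastic domination $\mu_N \succeq \textup{Bern}(\rho)$ from Lemma \ref{lem:StochasticDomination} (with $\rho = A/(A+1)$), combined with the microscopic concavity coupling (Theorem \ref{thm:BSCoupling}) and the moderate deviation estimate of Lemma \ref{lem:GuideParticles}. By the color projection (Lemma \ref{lem:LightProjection}), the non-light-plus-recolored-light configuration at time $T_k$ is distributed as $\mu_N$ and hence stochastically dominates $\textup{Bern}(\rho)$. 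This allows me to recouple at each checkpoint with a Bernoulli-$\rho$ environment on $\Z$ and rerun the microscopic-concavity argument of Lemma \ref{lem:PushForward} over $[T_k, T_{k+1}]$.

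A secondary issue is that the lemma requires $\lc_r(T_k) \leq N - 4T$, which may fail for small $k$ if $s_r$ lies close to $N$. I would treat this via a brief burn-in using smaller time scales: while $\lc_r \in (N - 4T,\, N]$, the negative drift $\kappa < 0$ induced by the boundary density $\rho_r = A/(A+1) > 1/2$ forces the particle to exit the boundary layer in time $O(N^\theta)$ with overwhelming probability. This can be proven by a direct coupling-and-moderate-deviation argument analogous to Lemma \ref{lem:GuideParticles}, adapted to respect the right boundary. The main obstacle I anticipate is rigorously combining the color-projection identity with the stochastic domination across all $M$ iterations, and handling the burn-in cleanly near the right boundary, where the reduction to an ASEP on $\Z$ via Lemma \ref{lem:CouplingOpenZ} degrades.
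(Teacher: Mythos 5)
There is a genuine gap, and it sits exactly where you flag your ``secondary issue.'' Your main iteration is fine (indeed, since the target is only $N-N^{\theta}$, a single application of Lemma \ref{lem:PushForward} with $t=c_1N^{\theta}$ would already suffice once $\lc_r\leq N-4t$, and the recoupling at checkpoints via Lemma \ref{lem:LightProjection}/Remark \ref{rem:ProjectionOpenASEP} is legitimate). The entire difficulty of the proposition is the case $s_r\in(N-4t,N]$, and your plan disposes of it by asserting that the particle exits the boundary layer of width $4T\sim N^{\theta}$ in time $O(N^{\theta})$ ``with overwhelming probability'' by an argument ``analogous to Lemma \ref{lem:GuideParticles}, adapted to respect the right boundary.'' This cannot be extracted from the tools at hand: Lemma \ref{lem:GuideParticles} and the microscopic concavity coupling (Theorem \ref{thm:BSCoupling}) are statements for the ASEP on $\Z$ with Bernoulli product environments, and the only bridge to the open system, Lemma \ref{lem:CouplingOpenZ}, requires the particle to stay at distance at least $2t$ from the boundary throughout $[0,t]$ — precisely what fails in the layer you need to clear. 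If instead you try to clear the layer by applying Lemma \ref{lem:PushForward} with time steps proportional to the current distance to the boundary, the first steps are at scale $O(1)$ and each fails with probability bounded away from $0$, so no ``overwhelming probability'' statement for the burn-in can come out of this route; one would need genuinely new drift estimates for a second class particle next to an open reservoir, which neither the paper nor its references supply.

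This is why the paper's proof has a different architecture: it first proves only a \emph{positive-probability} escape, $\P(\lc_r(C_1N^{\theta})\leq N-N^{\theta})\geq c_1$, by (i) paying a constant probability to modify the configuration on the last $O(1)$ sites so that $s_r\leq N-C_3$ with $C_3$ a constant, and (ii) running a dyadic multi-scale scheme with time scales $2^{i-2}$ and spatial scales $2^{i}$, whose per-step failure probabilities $\exp(-c2^{i/8})$ sum to less than $1/2$ only after discarding the small scales; the initial constant scales are exactly where only positive probability is available. The high-probability bound $1-1/N$ at time $C_0N^{\theta}\log(N)$ is then obtained by repeating this positive-probability attempt $O(\log N)$ times, using the stationarity of the color projection, and finally using Lemma \ref{lem:PushForward} plus a union bound to keep the particle left of $N-N^{\theta}$ up to the deterministic final time. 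In other words, the $\log N$ in the statement comes from amplifying a constant success probability, not from covering distance; in your proposal the $\log N$ does no real work, and the actual difficulty is hidden in the unproven burn-in claim.
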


\begin{proof} We first claim that it suffices to show that there exists some constants $c_1,C_1>0$, depending only on $A,C,r$, such that for an arbitrary choice of $(s_i)_{i \in \lBr r \rBr}$
\begin{equation}\label{eq:GoalEscape}
    \P\left(  \lc_r( C_1 N^{\theta}) \leq N - N^{\theta}  \right) \geq c_1  . 
\end{equation}
To see that \eqref{eq:GoalEscape} indeed implies \eqref{eq:FinalGoalEscape}, we make the following observation. Since the law $\mu_N$ is invariant for the open ASEP without light particles, by Remark \ref{rem:ProjectionOpenASEP},  we can iterate \eqref{eq:GoalEscape} $C_2 \lceil\log(N)\rceil$ many times for a sufficiently large constant $C_2=C_2(c_1) \in \N$ to get 
\begin{equation}\label{eq:GoalEscapeNew}
    \P\left(  \lc_r(  i C_1 N^{\theta}) \leq N - N^{\theta}  \text{ for some } i \in \lBr C_2 \lceil \log(N) \rceil\rBr \right) \geq  1- \frac{1}{2N} . 
\end{equation} Suppose that the event in \eqref{eq:GoalEscapeNew} holds for some random $i\in \lBr C_2 \lceil \log(N) \rceil\rBr$. Then to conclude \eqref{eq:FinalGoalEscape} for $C_0=C_2+1$, we apply Lemma \ref{lem:PushForward} and  Remark \ref{rem:ProjectionOpenASEP} over $4(C_2+1)\lceil\log(N)\rceil$ many time intervals and then use a union bound. These time intervals are obtained by dividing $[i C_1 N^{\theta},(C_2+1)N^{\theta}\log(N)]$ arbitrarily into subintervals of lengths between $N^{\theta}/5$ and $N^{\theta}/4$.

%$(i C_1+(j-1)/4) N^{\theta}$ and $(i C_1+j/4) N^{\theta}$ for $j\in\llbracket \lceil 4C_1C_2\rceil\lceil\log(N) \rceil\rrbracket$, using again Remark \ref{rem:ProjectionOpenASEP} together with a union bound.

%$(i C_1 + (j-1)/4) N^{\theta}$ and $(iC_1 +j/4) N^{\theta}$ for all $j \in \lBr 4 C_2 \lceil C_1 \log(N) \rceil \rBr$, using again Remark \ref{rem:ProjectionOpenASEP}.

In order to show \eqref{eq:GoalEscape}, we claim that without loss of generality, we can assume  that
\begin{equation}\label{eq:AbitAway}
    s_r \leq N - C_3
\end{equation}
for some constant $C_3>0$, which can be chosen arbitrarily large (but fixed). To see this, let  $(\tau^{\prime}(t))_{t \geq 0}$ be an open ASEP with some initial locations $(s_i^{\prime})_{i \in \lBr r \rBr}$ for the light particles such that $s_i^{\prime}=s_i$ whenever $s_i<N-C_3-r$, and $s_i^{\prime} \in \lBr N-C_3-r,N-C_3\rBr$ for all other $i \in \lBr r \rBr$. On the remaining sites $x$, we let $\tau_x(0)=\eta_x$.
Now the basic coupling $\mathbf{P}$ ensures that 
\begin{equation}
    \liminf_{N \rightarrow \infty} \mathbf{P}( \tau(1)=\tau^{\prime}(1) ) > 0
\end{equation}
as this requires to modify $\tau(0)$ and $\tau^{\prime}(0)$ only at positions $ \lBr N-C_3-r,N \rBr$. Since we want to show that \eqref{eq:GoalEscape} holds with positive probability,  this yields the desired claim.

In the following, we will achieve \eqref{eq:GoalEscape} by a multi-scale argument. Set $K:=\lceil 8 \kappa^{-1} \rceil$ and $M:=\theta \log_2 N$. To simplify notation, we will assume without loss generality that $M\in \N$. For all $i\in\llbracket M\rrbracket$ and $j \in \lBr K \rBr$, we define a family of time scales $(\mathcal{T}_{i}^{j})$ and spatial scales $(\mathcal{S}_{i}^{j})$ by
\begin{align}
\begin{split}\label{def:Scales}
\mathcal{T}_{i}^{j} &:= 2^{i-2} (K + j )  \\
\mathcal{S}_{i}^{j} &:= 2^{i} (1+ K^{-1} j) . 
\end{split}
\end{align}   
Moreover, we define the events
\begin{equation}
\mathcal{B}^{j}_i = \left\{   \lc_r(\mathcal{T}_{i}^{j}) \leq N -  \mathcal{S}_{i}^{j}  \right\} 
\end{equation} that the rightmost light particle is to the left of location $N-\mathcal{S}_{i}^{j}$ at time $\mathcal{T}_{i}^{j}$. 
Note that 
\[
\mathcal{T}_{i}^{j}-\mathcal{T}_{i}^{j-1}=2^{i-2}\geq\frac{2}{\kappa}\left(\mathcal{S}_{i}^{j}-\mathcal{S}_{i}^{j-1}\right) 
,\quad 
\mathcal{T}_{i}^{0}-\mathcal{T}_{i-1}^{K-1}=2^{i-3}\geq\frac{2}{\kappa}\left(\mathcal{S}_{i}^{0}-\mathcal{S}_{i-1}^{K-1}\right),
\]
and $\min\left(\mathcal{S}_i^j,N-\mathcal{S}_i^j\right)\geq 4\cdot 2^{i-2}=4\max(\mathcal{T}_{i}^{j}-\mathcal{T}_{i}^{j-1},\mathcal{T}_{i}^{0}-\mathcal{T}_{i-1}^{K-1})$ when $N$ is sufficiently large.
%\begin{equation}
%    \mathcal{T}_{i}^{j}-\mathcal{T}_{i}^{j-1} = 2^{i-2} \leq \frac{1}{4} S_i^{j} . 
%\end{equation}

We now claim that there exist constants $c_2,C_4>0$ such that for all $i \in \lBr C_4, M \rBr$ and $j \in \lBr K-1 \rBr$,
\begin{equation*}%\label{eq:EventsBijBound}
\P(\mathcal{B}^{j}_i \, |  \, \mathcal{B}^{j-1}_i ) \geq   \inf_{(s_k) \colon s_r \leq N-\mathcal{S}^{j-1}_{i}}\P\left( \lc_r(2^{i-2}) \leq N - \mathcal{S}^j_{i} \, \big| \,  \lc_k(0)=s_k \, \forall  k\in \lBr r \rBr\right)
\geq 1 - \exp(-c_2 2^{i/8}) ,
\end{equation*} 
and similarly, for all $i \in \lBr C_4, M \rBr$,
\begin{equation*}
    \P(\mathcal{B}^{0}_i \, |  \, \mathcal{B}^{K-1}_{i-1} ) \geq  \inf_{(s_k) \colon s_r \leq N-\mathcal{S}^{K-1}_{i-1}}\P\left( \lc_r(2^{i-3}) \leq N - \mathcal{S}^0_{i} \, \big| \,  \lc_k(0)=s_k \, \forall  k\in \lBr r \rBr\right)
\geq 1 - \exp(-c_2 2^{i/8}). 
\end{equation*}  For the first inequalities, respectively, we use the strong Markov property together with Lemma \ref{lem:LightProjection} as $\eta$ is chosen according to the stationary measure $\mu_N$ of the open ASEP without light particles; see Remark \ref{rem:ProjectionOpenASEP}. For the second inequalities, we use Lemma \ref{lem:PushForward}. %, which holds uniformly in the initial positions $(s_k)_{k\in \lBr r \rBr}$ of the light particles.  
Thus, we obtain that
\begin{multline}\label{eq:vbersa}
\P\left( \lc_r(KN^{\theta}/4)\leq N-N^{\theta} \Big{|} \lc_r(K2^{C_4}/4)\leq N-2^{C_4}\right)=\P\left( \mathcal{B}_{M}^{0} | \mathcal{B}_{C_4}^{0}  \right) \\\geq  1 - K \sum_{i=C_4}^{M} \exp(-c_2 2^{i/8}) > \frac{1}{2},
\end{multline} where the last inequality above follows from choosing the constant $C_4>0$ sufficiently large.
Recall our assumption \eqref{eq:AbitAway} that $s_r\leq N-C_3$. Choosing $C_3 = (K+1) 2^{C_4}$ and using again Lemma \ref{lem:PushForward}, we have
$\lc_r(K2^{C_4}/4)\leq \max(s_r,K2^{C_4})\leq N-2^{C_4}$ with positive probability, for $N$ sufficiently large. 
Then in view of \eqref{eq:vbersa} we conclude the desired result \eqref{eq:GoalEscape}, where $C_1=K/4$. 
%  We claim that the lower bound in \eqref{eq:EventsBijBound} remains valid 
% for all $j \in \{0,1,\dots,\lceil 4 \kappa^{-1} \rceil \}$ and $i \leq \theta\log_2(N)$ with some $\theta \in (0,1)$. To see this, by Lemma \ref{lem:ApproximationHalfspace}
% in order to approximate the stationary distribution of the open ASEP by a Bernoulli-$\rho$-product measure, there exists a constant $\tilde{c}>0$ and a coupling $\tilde{\mathbf{P}}$ between the open ASEP $(\tau_t)_{t \geq 0}$ with light particles, and an asymmetric simple exclusion process on the integers $(\xi_t)_{t \geq 0}$ started from a Bernoulli-$\rho$-product measure and the same locations for the light particles, such that 
% \begin{equation}
% \tilde{\mathbf{P}}( \zeta_s(x) = \xi_s(x) \text{ for all }  s \in [\mathcal{T}_{i}^{j-1},\mathcal{T}_{i}^{j}] ) \geq 1- \exp(-c i ) 
% \end{equation} for all $i \in \N_0$, using that  $\mathcal{T}_{i}^{j}-\mathcal{T}_{i}^{j-1}=2^{i-1}$ for all $j \in \{0,1,\dots,\lceil 4 \kappa^{-1} \rceil \}$.
% Now choose $C_{\ast} \geq 2^{C^{\prime}}$ and iterate the bound on the escape probability to conclude \textbf{DETAILS!!}
\end{proof}

Next, we show that with probability tending to $1$ as $N$ goes to infinity, within time of order $N$ we will reach a window of size $N^{\theta}$ near the left boundary, for some $\theta \in (0,1)$.

\begin{lemma}\label{lem:Traverse}
Assume \eqref{eq:RestatedAssumptions}.
Let $\eta \sim \mu_N$ and fix some $0<s_1<s_2<\dots <s_r \leq N-N^{\theta}$ with some finite $r\in \N$ and parameter $\theta \in (0,1)$. Consider the open ASEP $(\tau(t))_{t \geq 0}$ with $r$ many light particles and with initial condition
\begin{equation}
\tau_x(0) = \begin{cases}
\ha  & \text{ if } x = s_i \text{ for some } i \in \lBr r \rBr  \\
\eta_x & \text{ if } x \notin S= \{ s_i \colon i \in \lBr r\rBr \} 
\end{cases}
\end{equation} for all $x\in \lBr N \rBr$.
Then there exists a constant $C_1 =C_1 (A,C,r)>0$ such that
\begin{equation}\label{eq:HitRatherClose}
\P\left(  \lc_r( C_1 N ) \leq N^{\theta} \right) \geq 1 - \frac{1}{N}
\end{equation} for all $N$ sufficiently large. 
\end{lemma}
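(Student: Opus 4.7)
The plan is to iterate Lemma \ref{lem:PushForward} on a mesoscopic time-step, driving the rightmost light particle leftward at the effective drift speed $|\kappa|/2$. Since the distance from $s_r$ to the target window $\lBr 1, N^{\theta}\rBr$ is at most $N$, a total time of order $N/|\kappa|$ will suffice, giving the constant $C_1 = C_1(A,C,r)$ in \eqref{eq:HitRatherClose}.

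Concretely, I would fix a time-step $t_{\ast} := \lfloor N^{\theta/2} \rfloor$ and a number of iterations $m := \lceil 4N/(|\kappa| t_{\ast})\rceil$, so that $m t_{\ast} \leq C_1 N$ and $4 t_{\ast} \leq N^{\theta}$ for $N$ large. Starting from the given initial data, I would apply Lemma \ref{lem:PushForward} on each time interval $[(i-1) t_{\ast},\, i t_{\ast}]$, for $i \in \lBr m \rBr$. By the strong Markov property at time $(i-1) t_{\ast}$ together with the color projection of Lemma \ref{lem:LightProjection} and the invariance of $\mu_N$ under the open ASEP dynamics (see Remark \ref{rem:ProjectionOpenASEP}), the configuration $\tau((i-1) t_{\ast})$ satisfies the hypothesis of Lemma \ref{lem:PushForward}: light particles at positions $\lc_1((i-1) t_{\ast}) < \dots < \lc_r((i-1) t_{\ast})$, with the background on the remaining sites distributed according to $\mu_N$. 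Each application then yields, with failure probability at most $\exp(-c t_{\ast}^{1/8})$, the bound
\begin{equation*}
    \lc_r(i t_{\ast}) \leq \max\!\left( \lc_r((i-1) t_{\ast}) - \tfrac{|\kappa|}{2} t_{\ast},\; 4 t_{\ast} \right).
\end{equation*}

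Iterating, $\lc_r$ is either reduced by $|\kappa| t_{\ast}/2$ at each step, or has already fallen below $4 t_{\ast} \leq N^{\theta}$, in which case the ``$\max$'' keeps it there indefinitely. After $m$ steps one then obtains $\lc_r(m t_{\ast}) \leq \max(s_r - m |\kappa| t_{\ast}/2,\, 4 t_{\ast}) \leq \max(N - N^{\theta} - 2N,\, N^{\theta}) = N^{\theta}$, as desired. The constraint $\lc_r((i-1) t_{\ast}) \leq N - 4 t_{\ast}$ required by Lemma \ref{lem:PushForward} is preserved throughout, since $\lc_r$ is non-increasing on the good event and $s_r \leq N - N^{\theta}$. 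A union bound over the $m = O(N^{1-\theta/2})$ failure events gives a total failure probability of at most $O(N^{1-\theta/2}) \exp(-c N^{\theta/16})$, which is less than $1/N$ for $N$ sufficiently large.

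The only substantive point in the argument is the strong Markov and color-projection bookkeeping that maintains $\mu_N$ as the background law at each iteration; the rest is a straightforward union bound and arithmetic. This essentially parallels the multi-scale iteration already carried out in the proof of Proposition \ref{pro:EscapeLight}, with the main difference being that here the sequence of iterations is $O(N^{1-\theta/2})$ long (rather than $O(\log N)$) and all iterations use the same fixed time-step $t_{\ast}$. I would thus expect no new difficulty beyond adapting that same bookkeeping to this longer (but uniform) schedule.
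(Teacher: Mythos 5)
Your proposal is correct and follows essentially the same route as the paper: iterate Lemma \ref{lem:PushForward} over polynomially many uniform time-steps, restoring the $\mu_N$ background at each step via Remark \ref{rem:ProjectionOpenASEP} and the strong Markov property, and conclude with a union bound. The only differences are cosmetic --- you take step size $N^{\theta/2}$ rather than the paper's $N^{\theta}/4$, and the paper additionally pads with a few extra applications of Lemma \ref{lem:PushForward} (its displays \eqref{eq:pin 2} and \eqref{eq:pin 3}) so that the conclusion holds at the exact time $C_1 N$, a detail your argument leaves implicit but which is easily supplied.
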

\begin{proof}
For the first statement, recall $K = \lceil 8 \kappa^{-1} \rceil$.
To simplify the notation, we assume without loss of generality that $KN^{1-\theta}\in\N$.
For all $i\in \llbracket K, K N^{1-\theta}-K\rrbracket$, we define the events 
\begin{equation}
    \mathcal{C}_i := \left\{ \lc_r\left( \frac{i}{4} N^{\theta} \right) \leq N -  \frac{i}{K}   N^{\theta}  \right\}.
\end{equation}
%In particular, $\mathcal{C}_0$ is the full probability space.
%Similar to the proof of Proposition \ref{pro:EscapeLight}, 
We get from Lemma \ref{lem:PushForward} that there exists some constant $c_2>0$ such that for all $i\geq 1$,% and sufficiently large $N$,
\begin{equation}
  \P( \mathcal{C}_i \, | \, \mathcal{C}_{i-1}) \geq 1 - \exp(-c_2N^{\theta/8}),
\end{equation} 
where we mention that we are again using Remark \ref{rem:ProjectionOpenASEP}.
Thus we have
\begin{multline}\label{eq:pin 1}
\P( \lc_r((KN-KN^{\theta})/4)\leq N^{\theta} |  \lc_r(K N^{\theta}/4)\leq N-N^{\theta} ) = \P(\mathcal{C}_{K N^{1-\theta}-K} | \mathcal{C}_K )\\ \geq 1 - KN^{1-\theta}\exp(-c_2N^{\theta/8}) .
\end{multline}
For $j\in\llbracket K\rrbracket$, using Lemma \ref{lem:PushForward} and Remark \ref{rem:ProjectionOpenASEP}, we have
\begin{equation}\label{eq:pin 2}
\P\left(\lc_r(jN^{\theta}/4)\leq N-N^{\theta}\big{|} \lc_r( (j-1)N^{\theta}/4)\leq N-N^{\theta}\right)\geq 1-\exp(-c_2 N^{\theta/8}),
\end{equation}
and
\begin{equation}\label{eq:pin 3}
\P\left(\lc_r((KN-(j-1)N^{\theta})/4)\leq N^{\theta} \big{|} \lc_r((KN-jN^{\theta})/4)\leq N^{\theta} \right) \geq 1-\exp(-c_2 N^{\theta/8}).
\end{equation}
Recall our assumption $\lc_r(0)=s_r\leq N-N^{\theta}$. We use union bounds for $j\in\llbracket K\rrbracket$ respectively in \eqref{eq:pin 2} and \eqref{eq:pin 3} and then use \eqref{eq:pin 1}, in summary we have
\[
\P\left(\lc_r(KN/4)\leq N^{\theta}\right)\geq 1 - KN^{1-\theta}\exp(-c_2N^{\theta/8}) -2K \exp(-c_2 N^{\theta/8}).
\]
We conclude \eqref{eq:HitRatherClose} by taking $C_1=K/4$.
%where we mention that we are again using Remark \ref{rem:ProjectionOpenASEP}.
%Thus we have
%\[\P( \lc_r(K(N-N^{\theta})/4)\leq N^{\theta} |  \lc_r(K N^{\theta}/4)\leq N-N^{\theta} ) = \P(\mathcal{C}_{K(N^{1-\theta}-1)} | \mathcal{C}_K ) \geq 1 - KN^{1-\theta}\exp(-c_1N^{\theta/8}) .\]
%where we use that $\mu_N$ is the stationary measure for the color projection of the open ASEP with light particles by Remark \ref{rem:ProjectionOpenASEP}.
%Thus we have
%\[\P( \lc_r(K(N-N^{\theta})/4)\leq N^{\theta} ) = \P(\mathcal{C}_{K(N^{1-\theta}-1)} ) \geq 1 - KN^{1-\theta}\exp(-c_1N^{\theta/8}) .\]
%Using Lemma \ref{lem:PushForward} between times $K(N-N^{\theta})/4$ and $KN/4$, setting $C_1=K/4$ and taking $N$ sufficiently large, we conclude the first statement \eqref{eq:HitRatherClose}. 
\end{proof}

% In this section, we consider a stationary open ASEP in the high density phase, with a fixed number $r$ of light particles initial placed at $r$ positions $[N-N^{\theta}]$ in the segment for some $\theta \in (0,1)$. Our goal is to show that uniformly in the initial locations of the light particles, we see that after order $N$ steps, the light particles are with high probability close to the left boundary of the segment.  

% %Recall the multi-species open ASEP with light particles with respect to boundary parameters $\alpha,\beta>0$, and $\gamma,\delta \geq 0$ as well as $\beta^{\prime}>\beta$. Assume that $A>\max(1,C)$ and $A^{\prime}>\max(1,C)$, and let $\zeta \sim \tilde{\mu}_N$, where $\tilde{\mu}_N$ denotes the stationary di

Next, we use a similar multi-scale strategy as in the proof of Proposition \ref{pro:EscapeLight} to show that for all sufficiently large times $t$, the light particles are with high probability located in a finite window around the left boundary of the segment. 

\begin{lemma}\label{lem:HittingWindow}
Assume \eqref{eq:RestatedAssumptions}.
Let $\eta \sim \mu_N$ and fix some $0<s_1<s_2<\dots <s_r \leq N$ with some finite $r\in \N$. Consider the open ASEP $(\tau(t))_{t \geq 0}$ with $r$  light particles and initial configuration
\begin{equation}
\tau_x(0) = \begin{cases}
\ha  & \text{ if } x = s_i \text{ for some } i \in \lBr r \rBr  \\
\eta_x & \text{ if } x \notin S = \{ s_i \colon i \in \lBr r\rBr \} 
\end{cases}
\end{equation} for all $x\in \lBr N \rBr$.
Then there exists some constant $C_0=C_0(A,C,r)>0$ such that for all $\varepsilon>0$, there exists some $n=n(\varepsilon)\in \N$ such that for all $t \geq C_0 N$,
\begin{equation}\label{eq:light particles in finite window}
\P( \lc_1(t) , \lc_2(t), \dots, \lc_r(t) \in \lBr n \rBr )  \geq 1 - \varepsilon ,
\end{equation} provided that $N$ is chosen sufficiently large.
\end{lemma}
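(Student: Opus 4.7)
Since $\lc_1 < \cdots < \lc_r$, it suffices to bound $\lc_r(t)$. I propose a two-stage reduction followed by an extension via the Markov property. Stage 1 combines Proposition \ref{pro:EscapeLight} and Lemma \ref{lem:Traverse} to drive $\lc_r$ down to scale $M_0 := N^{1/2}$ within time $O(N)$. Stage 2 iterates Lemma \ref{lem:PushForward} at geometrically shrinking spatial scales until $\lc_r$ lies in a fixed window depending only on $\varepsilon$.

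\textbf{Stages 1 and 2.} Choosing $\theta = 1/2$ in Proposition \ref{pro:EscapeLight} gives $\lc_r \leq N - N^{1/2}$ at some time of order $N^{1/2}\log N$ with probability $\geq 1-1/N$. Using the invariance of $\mu_N$ under the color projection (Lemma \ref{lem:LightProjection}, Remark \ref{rem:ProjectionOpenASEP}), I then apply Lemma \ref{lem:Traverse} starting from this intermediate time and obtain $\lc_r(T_1) \leq M_0$ with probability $\geq 1-2/N$ at a time $T_1 \leq C_1 N$. For Stage 2, set $\rho := 8/(|\kappa|+8) \in (0,1)$ and
\[
M_i := \rho^i M_0, \qquad t_i := \frac{2 M_{i-1}}{|\kappa|+8},
\]
chosen precisely so that $M_{i-1} - \tfrac{1}{2}|\kappa| t_i = 4 t_i = M_i$; i.e.\ both terms inside the maximum from Lemma \ref{lem:PushForward} are exactly $M_i$. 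I iterate the lemma until $M_{i^\ast} \leq n$, where $n = n(\varepsilon)$ is fixed below. The total additional time is $\sum_i t_i \leq 2 M_0/|\kappa| = O(N^{1/2})$, so $T_2 := T_1 + \sum_i t_i \leq C_0 N$ for some $C_0 = C_0(A,C,r)$. The error at step $i$ equals $\exp(-c\,t_i^{1/8}) = \exp(-a\,\rho^{(i-1)/8})$ with $a := c'\,M_0^{1/8}$; this is dominated by the last term at $i = i^\ast$, which equals $\exp(-c''\,n^{1/8})$ for some $c'' = c''(A,C)$ independent of $N$. Moving to earlier steps $i = i^\ast - k$ multiplies the exponent's magnitude by $\rho^{-k/8} > 1$, so the terms decay super-exponentially in $k$ and the total error is bounded by $2\exp(-c''\,n^{1/8})$. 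Choosing $n(\varepsilon)$ large enough that this is $\leq \varepsilon/2$ yields $\P(\lc_r(T_2) \leq n) \geq 1-\varepsilon$ for $N$ sufficiently large.

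\textbf{Extension and main obstacle.} Iterating Lemma \ref{lem:PushForward} from an intermediate time is justified because its proof only uses the stochastic domination $\succeq \mathrm{Bern}_N(A/(1+A))$ of the non-light-particle configuration; this domination holds initially by Lemma \ref{lem:StochasticDomination} and is preserved in time via the basic coupling of the color-projected process against an auxiliary open ASEP started from this Bernoulli product measure. For a general $t \geq C_0 N$, I regard the configuration at time $t - C_0 N$ as a new initial state (for which the same stochastic domination still holds) and rerun Stages 1--2 on $[t - C_0 N, t]$ to conclude $\P(\lc_r(t) \leq n) \geq 1 - \varepsilon$. The main obstacle is that the bound $\exp(-c\,t^{1/8})$ in Lemma \ref{lem:PushForward} degenerates to a constant at small $t$, so a naive multi-scale iteration would force $n$ to grow with $N$; the balancing choice $\rho = 8/(|\kappa|+8)$ exactly matches the drift term $|\kappa|t/2$ against the boundary term $4t$ in the conclusion of Lemma \ref{lem:PushForward}, ensuring that the total error is dominated by its final-step value, which depends only on $n$.
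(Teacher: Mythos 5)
This is essentially the paper's argument: Proposition \ref{pro:EscapeLight} and Lemma \ref{lem:Traverse} bring $\lc_r$ to scale $N^{\theta}$ within time of order $N$, a downward geometric iteration of Lemma \ref{lem:PushForward} (the paper runs its dyadic scales $\mathcal{S}_i^j$ backwards from $N^{\theta}$ down to a constant $2^{C_1}$) traps $\lc_r$ in a window depending only on $\varepsilon$ with the total error dominated by the final scale, and the claim for every $t \geq C_0 N$ follows by restarting the whole argument at a shifted time, exactly as in the paper. The one imprecision is your justification for rerunning Lemma \ref{lem:PushForward} from intermediate times: coupling against an auxiliary open ASEP with the same rates started from $\textup{Bern}_N(A/(1+A))$ does not by itself preserve the domination, since that product measure is not stationary for these boundary rates unless $AC=1$; instead argue, as the paper does via Lemma \ref{lem:LightProjection} and Remark \ref{rem:ProjectionOpenASEP}, that the color projection remains distributed as $\mu_N$, which dominates $\textup{Bern}_N(A/(1+A))$ by Lemma \ref{lem:StochasticDomination} (equivalently, tune the auxiliary left boundary rates to the curve $AC=1$ and use Lemma \ref{lem:sandwiching}).
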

\begin{proof} Fix some $\theta \in (0,1)$, and recall 
$K:=\lceil 8 \kappa^{-1} \rceil$ and $M=M(\theta):=\theta \log_2 N$. We assume without loss of generality that $M\in \N$ and recall the scales  $(\mathcal{T}_{i}^{j})$ and $(\mathcal{S}_{i}^{j})$ from \eqref{def:Scales}.
Let $C_1\in\N$ be a constant, which we will determine later on. We define the time scale $(\widehat{T}_{i}^{j})$ by the relation
\[
\widehat{\mathcal{T}}_{i}^{j} := \mathcal{T}_{M}^{0} - \mathcal{T}_{i}^{j}
\]
for all $i \in \lBr C_1,M \rBr$ and all $j \in \lBr K \rBr$, as well as the events
\[
\mathcal{D}^{j}_i := \left\{  \lc_r(\widehat{\mathcal{T}}_{i}^{j}) \leq  \mathcal{S}_{i}^{j}  \right\} .
\]
%\begin{equation}
%\widehat{\mathcal{T}}_{i}^{j} := t + \mathcal{T}_{C_1}^{0} - \mathcal{T}_{i}^{j}
%\end{equation}
%for all $i \in \lBr C_1,M \rBr$ and all $j \in \lBr K \rBr$, as well as the events
%\begin{equation}
%\mathcal{D}^{j}_i(t) := \left\{  \lc_r(\widehat{\mathcal{T}}_{i}^{j}) \leq  \mathcal{S}_{i}^{j}  \right\} .
%\end{equation} 
Note that  the same arguments as in the proof of Proposition \ref{pro:EscapeLight}  yield that there exist constants $c_1,C_2,C_3>0$ such that
\begin{align*}
    \P\left( \mathcal{D}^{j}_i \, \big| \, \mathcal{D}^{j+1}_i \right)&= \P\left( \lc_r(2^{i-2}) \leq  \mathcal{S}_{i}^j  \, \big| \, \lc_r(0)\leq \mathcal{S}^{j+1}_{i} \right)  \geq 1 - C_2 \exp(-c_1 2^{i/8} ) \\
    \P\left( \mathcal{D}^{K-1}_{i-1} \, \big| \, \mathcal{D}^{0}_i \right)&= \P\left( \lc_r(2^{i-3}) \leq  \mathcal{S}_{i-1}^{K-1}  \, \big| \, \lc_r(0)\leq \mathcal{S}_{i}^0 \right)  \geq 1 - C_2 \exp(-c_1 2^{i/8} )
\end{align*} for all $i \in \lBr C_3,M \rBr $ and all $j \in \lBr K \rBr$. Choosing now $C_1=C_1( c_1,C_2,C_3)$ sufficiently large, we get
\begin{equation}\label{eq:HitSmall}
   \P\left(\lc_r( \mathcal{T}_{M}^{0} - \mathcal{T}_{C_1}^{0}) \leq  2^{C_1} \, \big| \, \lc_r( 0) \leq  N^{\theta} \right)
   = \P\left(\mathcal{D}_{C_1}^0(t)\big{|}\mathcal{D}_M^0(t)\right)
   \geq 1 - K \sum_{i=C_1}^{M}  C_2 \exp(-c_1 2^{i/8} ) \geq 1-  \frac{\varepsilon}{2},
\end{equation}
for sufficiently large $N$. Suppose that we start the process at time $s\geq0$ with arbitrary locations of light particles $0<s_1<s_2<\dots <s_r \leq N$ within $\mu_N$. Using Proposition \ref{pro:EscapeLight}, after time $C_4 N^{\theta}\log(N)$, with probability at least $1-1/N$, the rightmost light particle $\lc_r$ gets in the window $[0, N-N^{\theta}]$. Then using Lemma \ref{lem:Traverse}, after another time $C_5N$, with probability at least $1-1/N$, $\lc_r$ gets in the window $[0, N^{\theta}]$. Finally we use \eqref{eq:HitSmall} above, after again time $\mathcal{T}_{M}^{0} - \mathcal{T}_{C_1}^{0}=N^{\theta}-2^{C_1}$, with probability at least $1-\varepsilon/2$, $\lc_r$ gets in the window $[0, n]$. 
In summary, at time 
\[
s+C_4 N^{\theta}\log(N)+C_5N+N^{\theta}-2^{C_1},
\]
the rightmost light particle is located in $[0,n]$ with probability at least $(1-1/N)^2(1-\varepsilon/2)\geq1-\varepsilon$ for $N$ sufficiently large. By choosing $s\geq0$ arbitrarily, we conclude the proof.
%Set $n= 2^{C_1} $. Using Proposition \ref{pro:EscapeLight} and Lemma \ref{lem:Traverse}, note that
%\begin{equation}\label{eq:HitMacro}
%        \P( \lc_r( \widehat{\mathcal{T}}_{M}^{0} ) \leq  N^{\theta} ) \geq 1 - \frac{\varepsilon}{2} 
%\end{equation} for all $N$ sufficiently large. Hence, combining \eqref{eq:HitSmall} and \eqref{eq:HitMacro}  finishes the proof.
\end{proof}

\begin{proof}[Proof of Theorem \ref{thm:mainHighLow} in the fan region $AC\leq1$]
As mentioned in the beginning of this section, we only consider the fan region of the high density phase $A>1$, $AC\leq1$, as the result for the low density case follows from the well-known particle-hole symmetry.

We fix an arbitrary sequence $0<s_1<s_2<\dots <s_r \leq N$ and let $\eta \sim \mu_N$. Consider the open ASEP with light particles process $(\tau(t))_{t\geq0}$ on $\llbracket N\rrbracket$ starting with the initial configuration 
\begin{equation*}
\tau_x(0) = \begin{cases}
\ha  & \text{ if } x = s_i \text{ for some } i \in \lBr r \rBr  \\
\eta_x & \text{ otherwise } 
\end{cases}
\end{equation*} for all $x\in \lBr N \rBr$. By the Markov chain convergence theorem, as $t\rightarrow\infty$, the random variables $\lc_i(t)$ converge to $\lc_i$ under the stationary measure $\mt_{N,r}$, for all $i\in\llbracket r\rrbracket$. By taking $t\rightarrow\infty$ in \eqref{eq:light particles in finite window} from Lemma \ref{lem:HittingWindow}, we know that for any $\varepsilon>0$ there exists $n=n(\varepsilon)$ such that 
\[
\mt_{N,r}( \lc_1 , \lc_2 , \dots, \lc_r \in \lBr n \rBr )  \geq 1 - \varepsilon 
\]
for sufficiently large $N$. Taking $N\rightarrow\infty$ and then $\varepsilon\rightarrow0$, we conclude the proof.
%Using the Markov chain convergence theorem for the open ASEP $(\tau(t))_{t \geq 0}$ on $\{0,1\}^N$ with $r$ light particles, there exists some $T_{\ast}=T_{\ast}(N,r)$ such that the following claim implies Theorem \ref{thm:mainHighLow} in the fan region: for any $\varepsilon \in (0,1)$ and an arbitrary choice for the initial configuration, we find some $n$ such that
%\begin{equation}
%\P ( \lc_1(T_{\ast}) , \lc_2(T_{\ast}), \dots, \lc_r(T_{\ast}) \in \lBr n \rBr )  \geq 1 - \varepsilon 
%\end{equation} for all $N$ sufficiently large. Recall from Remark \ref{rem:ProjectionOpenASEP} that up to the position of the light particles, the measure $\widehat{\mu}_{N,r}$ agrees with the stationary measure $\mu_N$ of an open ASEP without light particle. Hence, fixing an arbitrary sequence $0<s_1<s_2<\dots <s_r \leq N$ and letting $\eta \sim \mu_N$, we can use Lemma \ref{lem:HittingWindow} for the initial configuration given by \begin{equation}
%\tau_x(0) = \begin{cases}
%\ha  & \text{ if } x = s_i \text{ for some } i \in \lBr r \rBr  \\
%\eta_x & \text{ otherwise } 
%\end{cases}
%\end{equation} for all $x\in \lBr N \rBr$, 
%and $t=T_{\ast}$ to conclude. 
\end{proof}

\subsection{Mixing times for the open ASEP with light particles in the high density phase}\label{sec:MixingTimesLight}

Using the previous estimates on the light particles, we can now prove Theorem \ref{thm:MainMixingTime} on the mixing time of the open ASEP with light particles in the fan region of the high density phase. 
%Consider two arbitrary initial conditions $\eta$ and $\eta^{\prime}$. 
We have the following strategy. First, we ensure that the color projection of the open ASEP, defined in Section~\ref{sec:SecondClassProjection}, has mixed, using the results from \cite{gantert2023mixing}. We then use the estimates from Section~\ref{sec:EscapeLight} to couple the light particles. 
We start by recalling a result by Gantert et al.\ from \cite{gantert2023mixing} on the open ASEP without light particles in the high density phase. 

\begin{theorem}[c.f.\ Theorem 1.5 in \cite{gantert2023mixing}]\label{thm:MixingTimesOpenASEP} Let  $(\eta(t))_{ t\geq 0}$ and  $(\eta^{\prime}(t))_{ t\geq 0}$ be two open ASEPs without light particles under to the basic coupling $\mathbf{P}$. Assume that $A > \max(1,C)$ and $AC \leq 1$. Then there exists some $C_0=C_0(A,C)>0$ such that for any pair of initial configurations $\tilde{\eta},\tilde{\eta}^{\prime}$,
\begin{equation}
\lim_{N \rightarrow \infty} \mathbf{P}\left( \eta(t) =  \eta^{\prime}(t) \text{ for all } t \geq C_0 N \, \big| \, \eta(0)=\tilde{\eta} \text{ and } \eta^{\prime}(0)=\tilde{\eta}^{\prime} \right) = 1 . 
\end{equation}
\end{theorem}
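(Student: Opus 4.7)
The plan is to reduce the coalescence statement to the extremal pair of initial configurations, represent discrepancies as second class particles, show they drift leftward at linear speed, and absorb them at the left boundary via a multi-scale argument analogous to Section~\ref{sec:EscapeLight}. First I would note that the open ASEP is attractive under the basic coupling $\mathbf{P}$: componentwise inequalities of configurations are preserved for all $t \geq 0$. Hence any pair $(\eta(t),\eta'(t))$ is sandwiched between the processes started from the all-zero configuration $\mathbf{0}$ and the all-one configuration $\mathbf{1}$, respectively. Since coalescence is absorbing under the basic coupling, it suffices to prove that the $\mathbf{0}$-started and $\mathbf{1}$-started processes agree by time $C_0 N$ with probability tending to $1$.

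For two such coupled processes, define the discrepancy process $(\xi(t))_{t\geq 0}$ by $\xi_x(t)=2$ whenever $\eta_x(t) \neq \eta'_x(t)$ and $\xi_x(t)=\eta_x(t)$ otherwise. In the bulk, the sites labelled $2$ evolve exactly as second class particles in an ASEP, while at the open boundaries a $2$ can be \emph{absorbed} into a $0$ or $1$ whenever a boundary clock $\alpha,\gamma$ (or $\beta,\delta$) rings on a site of disagreement. The coalescence time is thus the time at which all discrepancies have been absorbed. Using Lemma~\ref{lem:CouplingOpenZ} to compare with the ASEP on $\Z$ away from the boundaries, combined with the microscopic concavity coupling (Theorem~\ref{thm:BSCoupling}) and the moderate deviation estimates (Theorem~\ref{thm:Moderate}) applied with background density $\rho = A/(1+A) > 1/2$ (justified in the fan region by the stochastic domination in Lemma~\ref{lem:StochasticDomination}), each second class particle has characteristic speed $\kappa = (1-q)(1-2\rho) < 0$ with fluctuations of order $t^{2/3}$. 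This is essentially the content of Lemma~\ref{lem:PushForward}, now applied to discrepancies rather than light particles.

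Next I would run a multi-scale argument parallel to Proposition~\ref{pro:EscapeLight}, Lemma~\ref{lem:Traverse}, and Lemma~\ref{lem:HittingWindow}, but with the roles of the two boundaries exchanged: introduce nested time/space scales $(\mathcal{T}_i^j,\mathcal{S}_i^j)$ pushing the \emph{rightmost} discrepancy leftward by a factor of $2$ on each scale, and use a union bound on the stretched exponential tails from Theorem~\ref{thm:Moderate} to conclude that after time $O(N)$ every discrepancy lies in a finite window $\llbracket n \rrbracket$ near site $1$ with probability at least $1-\varepsilon$. Within such a finite window the dynamics is an irreducible Markov chain on a state space of bounded size, so the boundary clocks $\alpha,\gamma$ clear all remaining discrepancies within an additional $O(1)$ time with constant probability; iterating $O(\log N)$ times gives absorption with probability $1-o(1)$, and this extra $o(N)$ cost is absorbed into the constant $C_0$.

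The main obstacle is the boundary absorption step, specifically the buildup of discrepancies close to site $1$: while each individual discrepancy drifts leftward at speed $|\kappa|$ in the bulk, once many of them accumulate near the left boundary they block one another via the exclusion rule, so one must rule out a queue that persists for time $\omega(N)$. The mechanism that prevents this is that in the fan region of the high density phase the effective left-boundary density $\rho_\ell = 1/(1+C)$ is strictly less than the bulk density $\rho = A/(1+A)$, so the discrepancies approaching the boundary always see a locally sub-stationary profile and can be absorbed at a rate bounded below uniformly in $N$; quantifying this uniform absorption rate, via comparison with an auxiliary semi-infinite ASEP as in \cite{gantert2023mixing}, is the technical heart of the argument.
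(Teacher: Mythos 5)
The paper does not actually prove this statement: it is imported from \cite{gantert2023mixing} (their Theorem 1.5), and the only ``proof'' supplied is the remark following the theorem, which explains that the coalescence form stated here is extracted from Section 7 of \cite{gantert2023mixing} (reduction to the extremal configurations via Lemma \ref{lem:sandwiching}) together with their Corollary 2.4 to pass between coalescence times and mixing times. Your opening step --- attractivity under the basic coupling and reduction to the pair started from the all-empty and all-full configurations --- is exactly the reduction that reference performs, so in outline your strategy is in the right spirit.

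As a self-contained proof, however, your proposal has a genuine gap. The discrepancy process between the two extremal configurations starts with $N$ second class particles, not finitely many, and its ``background'' is not distributed as $\mu_N$ (initially there is no background at all, and at later times its law is some non-stationary object). Every tool you invoke from Section \ref{sec:Concentration} --- Lemma \ref{lem:GuideParticles}, Lemma \ref{lem:PushForward}, Proposition \ref{pro:EscapeLight} --- is proved for a \emph{fixed finite} number $r$ of second class particles sitting inside a configuration drawn from $\mu_N$ (so that Lemma \ref{lem:StochasticDomination} supplies the Bernoulli-$\rho$ domination with $\rho>1/2$ that feeds Theorem \ref{thm:Moderate}), and the constants degrade with $r$. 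None of this transfers to order-$N$ discrepancies in a non-stationary environment, and the mutual exclusion among the discrepancies is precisely what invalidates the single-particle moderate deviation input. You acknowledge this in your final paragraph, but then defer ``the technical heart'' --- the uniform lower bound on the rate at which the queue of discrepancies is absorbed at the left boundary --- to a comparison ``as in \cite{gantert2023mixing}'', which is the very result being proven. So the proposal is either circular (if it relies on that reference) or incomplete (if read as self-contained). The honest options are to quote the result as the paper does, or to actually carry out the boundary flux estimate controlling the exit rate of discrepancies; the latter is the substance of Section 7 of \cite{gantert2023mixing} and is not a routine adaptation of the finite-$r$ machinery developed in this paper.
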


Let us point out that strictly speaking, \cite{gantert2023mixing} states in Theorem 1.5 only bounds on the mixing time of the open ASEP. However, a closer inspection of the arguments in Section 7 of \cite{gantert2023mixing} reveals that the authors give a bound on the coalescence time when starting from an arbitrary initial configuration (using Lemma~\ref{lem:sandwiching} to reduce the statement to the extremal configurations), and then use Corollary 2.4 in \cite{gantert2023mixing} to convert Theorem \ref{thm:MixingTimesOpenASEP} into an upper bound on the mixing time. Next, we establish a coupling result for pairs of stationary configurations, which only differ in the location of the light particles.

%We will use Theorem \ref{thm:MixingTimesOpenASEP} for the  color projection, which was defined in Section \ref{sec:SecondClassProjection}.

% \begin{lemma}\label{lem:Premixing}
% Fix $r\in \N$. Let $(\xi_t)_{t \geq 0}$ and $(\xi^{\prime}_t)_{t \geq 0}$ be two open ASEPs with $r$ light particles on a segment of length $N$. Then there exists a constant $c>0$ such that for all $N$ large enough, under the canonical coupling $\mathbf{P}$
% \begin{equation}
% \lim_{N \rightarrow \infty}\mathbf{P}\left( \xi_{cN}(x)=\xi^{cN}(x) \text{ for all } x\in [N] \text{ with } \xi_{cN}(x) \neq \ha \text{ and } \xi^{\prime}_{cN}(x) \neq \ha\right) = 1 . 
% \end{equation}
% \end{lemma}
\begin{lemma}\label{lem:Premixing}
Fix $r\in \N$ and two sets $S,S^{\prime} \subseteq \lBr N \rBr$ such that $|S|=|S^{\prime}|=r$. Let $(\tau(t))_{t \geq 0}$ and $(\tau^{\prime}(t))_{t \geq 0}$ be two open ASEPs with $r$ many light particles on a segment of length $N$ such that
\begin{align*}
   \tau_x =  \begin{cases}
       \ha & \text{ if } x\in S \\
       \eta_x & \text{ if } x \notin S
   \end{cases} \qquad \text{and} \qquad 
   \tau^{\prime}_x =  \begin{cases}
       \ha & \text{ if } x\in S^{\prime} \\
       \eta_x & \text{ if } x \notin S^{\prime} 
   \end{cases}
\end{align*}
for all $x\in \lBr N \rBr$, where $\eta \sim \mu_N$. Then there exists a constant $C_1>0$ such that  under the basic coupling $\mathbf{P}$, and an arbitrary choice of $S$ and $S^{\prime}$,
\begin{equation}
\lim_{N \rightarrow \infty}\mathbf{P}\left( \tau(C_1 N)=\tau^{\prime}(C_1 N) \right) = 1 .
\end{equation} 
\end{lemma}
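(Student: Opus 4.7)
The plan is to combine the color projection from Lemma~\ref{lem:LightProjection}, the concentration of light particles from Lemma~\ref{lem:HittingWindow}, and the mixing of the standard open ASEP from Theorem~\ref{thm:MixingTimesOpenASEP}.

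First, I would introduce coloring processes for both $(\tau(t))_{t\geq 0}$ and $(\tau'(t))_{t\geq 0}$ as in Lemma~\ref{lem:LightProjection}, so that the color projections $(\eta^{\tau}(t))$ and $(\eta^{\tau'}(t))$ are standard open ASEPs without light particles. I would choose the initial colorings so that at each site $x \in S \setminus S'$, the color assigned to the light particle in $\tau$ equals $\eta_x = \tau'_x$ (and symmetrically at each $x \in S' \setminus S$), while at $x \in S \cap S'$ the colors in the two projections match. With this choice both projections begin at the common configuration: $\eta^{\tau}(0) = \eta^{\tau'}(0) = \eta$.

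Second, I would extend the basic coupling $\mathbf{P}$ by coordinating the coloring clocks of the two processes so that the induced dynamics on the two projections realizes the basic coupling of standard open ASEPs, driven by a single pair of shared rate-$1$ and rate-$q$ ``projection-update'' clocks at each edge. The key observation is that, at each ring of such a clock at an edge $\{x,x+1\}$, the effect on the projection is the standard ASEP sort of the projection values, and this effect is achieved in the underlying process either by a bulk swap (when at most one of the two sites carries a light particle) or by a color swap (when both sites carry light particles). Consequently, both projections evolve under identical updates starting from the common state~$\eta$, so $\eta^{\tau}(t) = \eta^{\tau'}(t)$ for all $t \geq 0$. Verifying the existence and consistency of this extended coupling via a case analysis of the bulk- and color-swap update rules is the main technical obstacle I anticipate.

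Third, once $\eta^{\tau}(t) = \eta^{\tau'}(t)$ for all $t$, the configurations $\tau(t)$ and $\tau'(t)$ agree if and only if their sets of light-particle positions coincide, since the value at every non-light-particle site is prescribed by the common projection. Fix $\varepsilon > 0$. Applying Lemma~\ref{lem:HittingWindow} to each of $(\tau(t))$ and $(\tau'(t))$ together with a union bound, I obtain constants $C_0 > 0$ and $n = n(\varepsilon) \in \mathbb{N}$ such that for all $N$ sufficiently large, at time $t_0 := C_0 N$ all $2r$ light particles lie in the window $\lBr n \rBr$ with probability at least $1 - \varepsilon/2$.

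Finally, conditional on the shared projection and on all light particles lying in $\lBr n \rBr$, the joint evolution of the two light-particle configurations is a Markov chain on the finite state space of pairs of $r$-subsets of $\lBr n \rBr$; it is irreducible (up to the negligible event that a particle escapes the window, which has super-polynomially small probability per unit of time by the drift estimates behind Lemma~\ref{lem:PushForward}) and the diagonal (coalesced) states are absorbing under the basic coupling. Hence there is a constant $p = p(n,r) > 0$ such that each unit time interval achieves coalescence with probability at least $p$, so that over an interval of length of order $N$ repeated applications of the strong Markov property yield coalescence with probability tending to $1$ as $N \to \infty$. Combining these estimates and letting $\varepsilon \to 0$ gives $\mathbf{P}(\tau(C_1 N) = \tau'(C_1 N)) \to 1$ for a suitable constant $C_1 = C_1(A,C,r) > 0$.
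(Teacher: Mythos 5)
Your first two steps (matching the colorings so that the two color projections coincide for all time, hence the processes can only disagree at light-particle positions, and then using Proposition~\ref{pro:EscapeLight}, Lemma~\ref{lem:Traverse} and the window estimate behind Lemma~\ref{lem:HittingWindow} to bring all $2r$ light particles into a finite window) are sound and are essentially the paper's route. The gap is in your final boosting step. Conditional on the shared projection, the joint light-particle configuration is \emph{not} a Markov chain on pairs of $r$-subsets of $\lBr n \rBr$: its jump rates depend on the surrounding environment, and the particles are not confined to $\lBr n \rBr$ -- the probability that a light particle leaves a window of fixed size $n$ within a unit time interval is of order one, not super-polynomially small (the drift estimates behind Lemma~\ref{lem:PushForward} only prevent excursions over distances growing with $t$, on time scales growing with $t$). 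So a single attempt only yields coalescence with some probability $p(n,r)>0$, and ``repeated applications of the strong Markov property'' do not by themselves give probability tending to $1$: after a failed attempt you must re-establish that the light particles are again in a finite window, \emph{conditionally on the failure}, and the marginal statement of Lemma~\ref{lem:HittingWindow} (which assumes the environment is a fresh sample of $\mu_N$) does not give this.

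This conditional re-entry is exactly what the paper's proof supplies and what is missing from your proposal. The paper runs attempts at times spaced $C_3\sqrt{N}$ apart, introduces events $\mathcal{B}^{(k)}$ (both rightmost light particles below $\sqrt N$ at the $k$-th attempt time) and $\mathcal{C}^{(k)}$ (coalescence by then), and crucially uses Lemma~\ref{lem:LightProjection} and Remark~\ref{rem:ProjectionOpenASEP}: since the law of the color projection is $\mu_N$ at every fixed time and does not depend on the light-particle locations, conditioning on $\mathcal{B}^{(k)}$ and on the failures $(\mathcal{C}^{(j)})^{\complement}$, $j<k$, leaves the environment distributed as $\mu_N$ up to the light-particle positions. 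This lets one re-run the Lemma~\ref{lem:PushForward}/Lemma~\ref{lem:HittingWindow} machinery at each attempt, giving a uniform conditional success probability $c_2>0$ per attempt and geometric decay over $O(\log N)$ attempts, in total extra time $O(\sqrt N \log N)=o(N)$. To repair your argument you would need to add this stationarity-under-conditioning input and the attendant event bookkeeping; as written, the ``finite irreducible chain with absorbing diagonal'' picture is not a valid description of the conditional dynamics.
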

\begin{proof}
Let $(\lc_i(t))_{t\geq 0}$ and $(\lc^{\prime}_i(t))_{t\geq 0}$ for all $i\in \lBr r \rBr$ denote the locations of the light particles in $(\tau(t))_{t \geq 0}$ and $(\tau^{\prime}(t))_{t \geq 0}$, respectively.  Note that under the basic coupling, the two processes $(\tau(t))_{t \geq 0}$ and $(\tau^{\prime}(t))_{t \geq 0}$ can at any time $t$ only differ at the locations of the light particles. 

Let $C_2,C_3>0$ be two constants, which we will determine later on. 
In order to couple the locations of the light particles, we define for all $k \in \lBr N \rBr$ the events
\begin{equation}
   \mathcal{B}^{(k)} := \left\{ \lc_r(t) , \lc_r^{\prime}(t)  \leq \sqrt{N} \text{ for all } t=C_2 N + C_3 j \sqrt{N}   \text{ with } j \in \lBr k \rBr \right\} .
\end{equation} 
Using Proposition~\ref{pro:EscapeLight} and  Lemma \ref{lem:Traverse} with $\theta=1/2$, we see that $C_2>0$ can be chosen such that 
\begin{equation}\label{eq:Bvents1}
\mathbf{P}(\mathcal{B}^{(1)}) \geq 1 - \frac{1}{N} 
\end{equation} for all $N$ sufficiently large. Moreover, note that there exists some constant $c_1>0$ such that 
  \begin{equation}\label{eq:LongerStay}
 \P\left(  \lc_r( C_2 N + C_3 j N^{\theta} ) \leq N^{\theta} \, \Big| \, \lc_r( C_1 N + C_3 (j-1) N^{\theta} ) \leq N^{\theta} \right) \geq 1 - \lceil 4 C_3 \rceil\exp(- c_1 N^{\theta/8})
   \end{equation} for all $j\in \N$, and all $N$ sufficiently large. 
This follows by applying Lemma \ref{lem:PushForward} between times $C_2 N + (j+(j^{\prime}-1)/4) N^{\theta} $ and $C_1 N + C_2 (j+j^{\prime}/4) N^{\theta}$ for $j^{\prime} \in \lBr \lceil 4 C_3 \rceil \rBr$, using again Remark \ref{rem:ProjectionOpenASEP}.
In particular, we get that 
\begin{equation}\label{eq:Bvents2}
    \mathbf{P}(\mathcal{B}^{(k)} \, |  \,  \mathcal{B}^{(k-1)} ) \geq 1 - 2 \lceil 4 C_3 \rceil \exp( -c_1 N^{1/16}) 
\end{equation} provided that $N$ is sufficiently large. 
% In particular, we get that 
% Note that from Proposition~\ref{pro:EscapeLight}, we see that there exist constants $c_1,C_2>0$ such that for any fixed parameter $C_3>0$, 
Next, we define the events 
\begin{align*}
\mathcal{C}^{(k)} &:= \left\{  \tau(C_2 N + C_3 (k+1) \sqrt{N})=\tau^{\prime}(C_2 N + C_3 (k+1) \sqrt{N}) \right\}  . 
\end{align*}  
We claim that $C_3$ can be chosen such that for some constant 
$c_2>0$, and all $k\in \lBr N \rBr$, we get that
\begin{equation}\label{eq:Cvent}
    \mathbf{P}\left( \mathcal{C}^{(k)} \, \big| \,  \mathcal{B}^{(k)} , (\mathcal{C}^{(j)})^{\complement} \text{ for all } j\leq k-1 \right) \geq c_2 . 
\end{equation} 
To show \eqref{eq:Cvent}, note that by Lemma \ref{lem:LightProjection}, the law of the color projection of the processes $(\tau(t))_{t \geq 0}$ and $(\tau^{\prime}(t))_{t \geq 0}$ does not depend on the location of the light particles, and thus is given by $\mu_N$ at any fixed time $t\geq 0$. In particular, we see that the under the event $\mathcal{B}^{(k)} \cap \bigcap_{j=1}^{k-1} (\mathcal{C}^{(j)})^{\complement}$, the law of the open ASEP with light particles at time $C_2 N + C_3 k\sqrt{N}$ is, up to the location of the light particles, given by $\mu_N$. Since we condition on the event $\mathcal{B}^{(k)}$ in \eqref{eq:Cvent}, the same arguments as in the proof of Lemma \ref{lem:HittingWindow} and the strong Markov property yield that $C_3$ can be chosen such that for some constants $c_3,C_4>0$ 
\begin{equation}\label{eq:AlmostCvent}
    \mathbf{P}\left(  \lc_r( C_2 N + C_3 (k+1) \sqrt{N} -1 ), \lc_r( C_2 N + C_3 (k+1) \sqrt{N} - 1 )  \leq C_4    \, \big| \,  \mathcal{B}^{(k)} , (\mathcal{C}^{(j)})^{\complement} \, \forall j <k \right) \geq c_3 
\end{equation} for all $k\in \lBr N \rBr$. The claim \eqref{eq:Cvent} follows now from \eqref{eq:AlmostCvent} as any pair of configurations which differ only at locations $\lBr \lceil C_4 \rceil \rBr$ can be coupled in time $1$ to coalesce with positive probability. 
Combining now \eqref{eq:Bvents1}, \eqref{eq:Bvents2}  and \eqref{eq:Cvent}, there exists a constant $C_5>0$ such that
\begin{align*}
   \mathbf{P}\big(  \tau(C_2 N + C_3 C_5 \sqrt{N}\log(N))&=\tau^{\prime}(C_2 N + C_3 C_5 \sqrt{N}\log(N)) \big)  \\
   &\geq  1-   \mathbf{P}\left( (\mathcal{C}^{(k)})^{\complement} \text{ for all } k\leq C_5 \log(N)\right) \\
   & \geq 1 - c_2^{C_5\log(N)} - \mathbf{P}( \mathcal{B}^{(\lceil C_5\log(N) \rceil)})  \\
   &\geq 1 - \frac{2}{N} 
\end{align*}
for all $N$ sufficiently large, allowing us to conclude. 
\end{proof}

We have now all tools to bound the mixing time of the open ASEP with light particles in the fan region of the high density phase. 

\begin{proof}[Proof of Theorem \ref{thm:MainMixingTime}] We will only consider the high density phase as the results for the low density phase follow  by the particle-hole symmetry.
Note that the lower bound on the mixing time is immediate from Lemma~\ref{lem:LightProjection} and Theorem~1.5 in \cite{gantert2023mixing}, which states a lower bound on the mixing times for the standard open ASEP in the high density phase.  For the upper bound on the mixing time, let $(\tau(t))_{ t\geq 0}$ and  $(\tau^{\prime}(t))_{ t\geq 0}$ be two open ASEPs with $r$ many light particles. 
Let $\varepsilon \in (0,1)$ and let $(\eta(t))_{t \geq 0}$ be a standard open ASEP with the same parameters $(q,\alpha,\beta,\gamma,\delta)$. Then using Lemma \ref{lem:LightProjection} and Theorem \ref{thm:MixingTimesOpenASEP},  there exists a constant $C_0>0$  such that for any $\varepsilon>0$ and any pair of initial configurations $\tau(0)$ and $\tau^{\prime}(0)$, under the basic coupling $\mathbf{P}$, 
\begin{equation*}
  \liminf_{N \rightarrow \infty }  \mathbf{P}( \tau_x(C_0 N) = \tau^{\prime}_x(C_0 N)=\eta(C_0 N) \text{ for all } x\in \lBr N \rBr \text{ with }  \tau_x(C_0 N),\tau^{\prime}_x(C_0 N) \in \{0,1\} ) \geq 1 - \frac{\varepsilon}{2}
\end{equation*} 
In particular, up to an error of $\varepsilon/2$, the law of the two open ASEPs $(\tau(t))_{ t\geq 0}$ and  $(\tau^{\prime}(t))_{ t\geq 0}$ agrees at time $C_0 N$ outside of the locations of light particles with the stationary measure $\mu_N$ of a standard open ASEP. By Lemma \ref{lem:Premixing},  there exists  some $C_1>0$ such that for any $\varepsilon>0$
\begin{equation}\label{eq:CouplingResult}
     \liminf_{N \rightarrow \infty }  \mathbf{P}\left(  \tau((C_0+C_1) N) = \tau^{\prime}((C_0+C_1) N)  \right) \geq 1- \varepsilon . 
\end{equation} 
Since $\varepsilon>0$ as well as $\tau(0)$ and $\tau^{\prime}(0)$ were  arbitrary, a standard argument to convert \eqref{eq:CouplingResult} into an upper bound on the mixing times -- see Corollary 2.4 in \cite{gantert2023mixing} --  finishes the proof. 
\end{proof}

\section{Density at the bulk in the coexistence line}\label{sec:proof of coexistence line density}
In this section, we show Proposition \ref{prop:coexistence} regarding the density of the stationary measure of the standard open ASEP on the coexistence line $A=C>1$. 
As a consequence, we prove Theorem \ref{thm:mainCoexistence} on the location of a light particle on the coexistence line. 
%In the following, we describe the density of the stationary measure on the coexistence line, where $A=C>1$. We establish Proposition \ref{prop:coexistence}, and as a consequence Theorem \ref{thm:mainCoexistence} on the location of a light particle in the coexistence line. 

% \begin{proposition}\label{pro:Coexistence}
% Consider the coexistence line $A=C>1$. Assume that  $(a_N)_{N=1,2,\dots}$ is a sequence such that $a_N/N\rightarrow\theta$ for some $\theta\in(0,1)$. Then as $N\rightarrow\infty$,
%     \begin{equation}
%     \mu_N(\tau_{a_N}=1)\rightarrow (1-\theta)\frac{A}{1+A}+\theta\frac{1}{1+A}.
%     \end{equation}
% \end{proposition}
%We first recall two results on the stationary measure and the local convergence for exclusion processes. 

We first recall the following result on the ``macroscopic'' density profile of standard open ASEP. %For any configuration $\eta\in \{0,1\}^{N}$, 
%we use $h_{\eta}(x)= \sum_{y=1}^{x} \eta_y$ to denote the height function at position $x\in \lBr N \rBr$.

\begin{theorem}[c.f.\ Theorem 1.6 in \cite{wang2023askey}]\label{thm:HeightFunctionCoexistence} Assume \eqref{eq:conditions qABCD} and let $ABCD\notin\{q^{-l}:l\in\NN_0\}$.
Consider the coexistence line $A=C>1$. Let $\eta^{(N)}\sim\mu_N$. 
Then we have that for all $y \in [0,1]$,
\begin{equation}\label{eq:macroscopic density profile}
\frac{1}{N}\sum_{x=1}^{\lceil yN \rceil}\eta^{(N)}(x)\Longrightarrow \frac{Ay + \min(y,U)(1-A)}{1+A} , 
\end{equation}
%\begin{equation}
%        \lim_{n \rightarrow \infty} \frac{1}{N} h_{\eta}( \lceil yN \rceil ) = \frac{Ay + \min(y,U)(1-A)}{1+A} , 
%    \end{equation}
where $U$ is a uniform random variable on $[0,1]$, and we use  $\Longrightarrow$ to denote weak convergence.
\end{theorem}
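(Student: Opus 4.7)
The plan is to adapt the matrix product ansatz combined with Askey--Wilson signed measures, as in the approach of \cite{bryc2017asymmetric, wang2023askey}. By standard tightness arguments, it suffices to establish convergence of the finite-dimensional distributions of $H_N(y) := N^{-1}\sum_{x=1}^{\lceil yN \rceil}\eta^{(N)}(x)$ at every fixed $0 = y_0 < y_1 < \dots < y_k \leq 1$. A convenient way to encode the joint law is via the moment generating function
\[
\mathbb{E}\Bigl[\prod_{j=1}^k t_j^{\sum_{x=\lceil y_{j-1}N\rceil+1}^{\lceil y_jN\rceil}\eta^{(N)}(x)}\Bigr]
= \frac{\langle W|\prod_{j=1}^k(\mathbf{E}+t_j\mathbf{D})^{m_j}|V\rangle}{\langle W|(\mathbf{D}+\mathbf{E})^N|V\rangle},
\]
with $m_j := \lceil y_jN\rceil - \lceil y_{j-1}N\rceil$, which follows directly from Proposition \ref{prop:MPA}.

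Using the Askey--Wilson realization \eqref{eq:entries}--\eqref{eq:W and V} of $\mathbf{D}, \mathbf{E}, \langle W|, |V\rangle$, each factor $(\mathbf{E}+t_j\mathbf{D})^{m_j}$ can be rewritten in terms of integrals involving $(1+t_j + 2\sqrt{t_j}\,x_j)^{m_j}$ against Askey--Wilson signed measures whose parameters depend on $(A,B,C,D,t_j)$, analogous to the one-dimensional identities used in Lemma \ref{lem:characterization of ld}. Composing these across $j$, with kernel factors coming from the three-term recurrence and orthogonality of Askey--Wilson polynomials linking neighbouring scales, yields a $k$-fold signed-integral representation of the joint MGF.

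Next one performs the $N \to \infty$ asymptotic analysis with $m_j/N \to y_j - y_{j-1}$. On the coexistence line $A = C > 1$, which lies in the shock region $AC > 1$, the relevant Askey--Wilson measures are genuinely signed, with atoms at parameter values corresponding to the two macroscopic densities $\rho_\ell := 1/(1+A)$ and $\rho_r := A/(1+A)$. A careful saddle/residue analysis shows that these atomic contributions dominate, and after summing the kernel cross-terms the joint MGF factors asymptotically as a sum over a cut-point $j_* \in \{0,1,\dots,k\}$ of products $\prod_{j\leq j_*}t_j^{\rho_\ell(y_j-y_{j-1})N}\prod_{j > j_*}t_j^{\rho_r(y_j-y_{j-1})N}$ with an effective weight proportional to $y_{j_*} - y_{j_*-1}$. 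This is exactly the MGF of $(F(y_1),\dots,F(y_k))$ for $F(y):=(Ay+\min(y,U)(1-A))/(1+A)$ with $U$ uniform on $[0,1]$, completing the identification of the limit.

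The main obstacle is the signed-measure asymptotics in the third step. In the fan region treated in \cite{bryc2017asymmetric} the relevant Askey--Wilson measures are genuine probability measures and standard Laplace-method tools apply directly, whereas on the coexistence line the positive and negative atomic masses must be tracked and combined to extract the correct limit. Formalising this requires the residue/analytic-continuation framework for Askey--Wilson signed measures developed in \cite[Section 2]{wang2023askey}, together with careful bookkeeping of the cross-terms that couple neighbouring scales in the $k$-fold integral. The appearance of the uniform law for $U$ will be a direct consequence of the fact that, along $A=C$, the atom weights depend linearly on the cumulative scale parameter $y_{j_*}$, so averaging over the cut-point $j_*$ mimics sampling from the Lebesgue measure on $[0,1]$.
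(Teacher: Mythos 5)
First, a point of comparison: the paper does not prove this statement at all --- it is imported (in a weakened, one-point form; see Remark \ref{rmk:shock}) from Theorem 1.6 of \cite{wang2023askey}, so there is no in-paper argument to measure your proposal against. Your outline does reconstruct the general strategy of that reference: write the joint Laplace transform of the partial sums via the matrix product ansatz of Proposition \ref{prop:MPA}, re-express it as iterated integrals against Askey--Wilson signed measures, and extract the limit from the atomic parts of those measures in the shock region $AC>1$. In that sense you have identified the right framework, and the first two steps of your sketch are sound. (A minor remark: the tightness step is superfluous, since the statement concerns marginal convergence of a $[0,1]$-valued random variable at each fixed $y$, and even the full finite-dimensional version in \cite{wang2023askey} requires no tightness.)

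The genuine gap is your third step, and it is not merely a matter of ``careful bookkeeping.'' For fixed $t_j\neq 1$ the quantity $\mathbb{E}\bigl[t^{NH_N(y)}\bigr]$ is, to leading exponential order, governed by a single one of the competing exponents $N\rho_\ell y\log t$ and $N\rho_r y\log t$ (whichever is larger for that $t$), so the leading-order saddle/residue analysis you describe can only recover the top of the support of the limit law --- it cannot see the mixture weights. The uniform law of $U$ lives entirely at sub-exponential order: one must show that the family of atomic contributions indexed by your cut-point $j_*$ all carry comparable prefactors whose normalized profile converges to Lebesgue measure on $[0,1]$, and that neither the continuous parts of the signed measures, nor the negative atom masses, nor the cross-terms coupling neighbouring scales interfere at that order. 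Relatedly, the objects you propose to match, $\prod_j t_j^{\rho(y_j-y_{j-1})N}$, diverge as $N\to\infty$, so ``this is exactly the MGF of $(F(y_1),\dots,F(y_k))$'' is not literally meaningful; one must either normalize by the leading term and control the full subexponential expansion, or work with $t_j=e^{s_j/N}$, where the saddle degenerates and all atoms contribute at the same order. This is precisely where the bulk of the work in \cite{wang2023askey} is concentrated, and your proposal asserts the conclusion (``an effective weight proportional to $y_{j_*}-y_{j_*-1}$'') rather than deriving it. Without that derivation the proof is incomplete; since the paper itself simply invokes \cite[Theorem 1.6]{wang2023askey}, the cleanest fix is to do the same.
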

\begin{remark}\label{rmk:shock}
    We mention that \cite[Theorem 1.6]{wang2023askey} shows convergence in finite dimensional distribution, which is slightly stronger than the version above (at a single location $y$). The right hand side of \eqref{eq:macroscopic density profile}, i.e. the macroscopic density profile, has the following physical interpretation:  
   the density is a constant $1/(1+A)$ over $[0,U)$, and a different constant $A/(1+A)$ over $[U,1]$, where $U$ is a random variable uniformly distributed on $[0,1]$, representing the location of the shock. This can be read by re-writing the right hand side of \eqref{eq:macroscopic density profile} as
  \[
  \frac1{1+A}\min(y,U) + \frac A{1+A}(y-\min(y,U)).
  \]  
\end{remark}

%Next, recall the ASEP on the integers $(\zeta(t))_{t \geq 0}$ from Section \ref{sec:ASEPintegers}, where $\zeta(t) \in \{0,1\}^{\mathbb{Z}}$ for all $t \geq 0$. We have the following result on the local convergence of $(\zeta(t))_{t \geq 0}$. 

Next, we recall the following result on the local convergence of the standard ASEP on the integers.

\begin{theorem}[c.f.\ Theorem 2 in \cite{bahadoran2006convergence}]\label{thm:LocalConvergence}
Let $(\zeta(t))_{t \geq 0}$ denote the ASEP on the integers.  We consider a sequence of initial configurations $\zeta^{(N)}\in \{0,1\}^{\mathbb{Z}}$ for $N \in \mathbb{N}$. Suppose that there exists some $\rho\in (0,1)$ such that as $N\rightarrow\infty$,
\begin{equation}\label{eq:LocalRhoZ}
\frac{1}{N} \sum_{x \in \Z} \psi(x/N) \zeta^{(N)}_x \Longrightarrow \rho\int  \psi(x) \textup{d} x
\end{equation} for all almost everywhere continuous, bounded test functions $\psi \colon \mathbb{R} \rightarrow \mathbb{R}$ of bounded support. Then we have that for every $t_0>0$ and $x_0 \in \mathbb{R}$, 
\begin{equation}
\lim_{N \rightarrow \infty} \mathbb{P}\left( \zeta_{\lceil x_0 N \rceil }(t_0 N) = 1 \, \big| \, \zeta(0)=\zeta^{(N)}\right) = \rho  . 
\end{equation}
\end{theorem}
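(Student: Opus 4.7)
The plan is to combine the (entropy) hydrodynamic limit for the ASEP on $\Z$ with a local equilibrium argument. Under hypothesis \eqref{eq:LocalRhoZ}, the initial empirical density converges weakly to the constant profile $\rho$. The hydrodynamic limit for ASEP (in the Rezakhanlou framework) then implies that for every $t \geq 0$ and every compactly supported continuous test function $\psi$,
\begin{equation*}
    \frac{1}{N}\sum_{x\in\Z}\psi(x/N)\zeta_x(tN) \Longrightarrow \int u(t,x)\psi(x)\,\dd x,
\end{equation*}
where $u(t,x)$ is the entropy solution of Burgers' equation $\partial_t u + (1-q)\partial_x(u(1-u)) = 0$ with constant initial data $u(0,\cdot)\equiv \rho$. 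Since constants are stationary solutions of this PDE, $u(t_0,x)\equiv \rho$, and the macroscopic density at time $t_0 N$ remains $\rho$ everywhere.

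The next step is to upgrade this macroscopic convergence at time $t_0 N$ to convergence of the single-site marginal at $\lceil x_0 N\rceil$. I would couple $(\zeta(t))_{t\geq 0}$ with a stationary ASEP $(\xi(t))_{t\geq 0}$ started from the Bernoulli-$\rho$ product measure $\nu_\rho$ via the basic coupling; the discrepancies then evolve as a system of second class particles. By \eqref{eq:LocalRhoZ}, the initial signed discrepancy density vanishes in the macroscopic weak sense, and by the attractiveness of ASEP together with control of the second class particle current, the discrepancy density at time $t_0 N$ also vanishes around $x_0 N$. Combined with the standard one-block / two-block estimates, this implies that any subsequential weak limit of the shifted configuration $(\zeta_{\lceil x_0 N\rceil + y}(t_0 N))_{y\in\Z}$ is a translation-invariant stationary measure for ASEP with density $\rho$. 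By Liggett's classification of the translation-invariant extremal stationary measures for ASEP on $\Z$, the only such measure is $\nu_\rho$, whose single-site marginal is Bernoulli-$\rho$, yielding the claim.

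The main obstacle is the local equilibrium step, i.e., passing from macroscopic density information to single-site information. The classical implementation (via relative entropy estimates or the one-block / two-block estimates of Guo--Papanicolaou--Varadhan type) is technically involved in general, but is significantly simpler here because the hydrodynamic profile is constant in space and time, so there are no rarefaction fans or shocks to handle, and the attractive coupling with $\nu_\rho$ allows direct dominance arguments. In the full generality of \cite{bahadoran2006convergence}, the authors develop a sophisticated coupling framework for attractive conservative systems with arbitrary (possibly shock-producing) hydrodynamic profiles, of which our constant-density statement is a clean special case.
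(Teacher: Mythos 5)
The paper does not prove this statement at all: it is imported verbatim (up to notation) from Theorem 2 of \cite{bahadoran2006convergence}, so the ``paper's proof'' is simply the citation. Your proposal is therefore an attempt to reprove that external theorem, and as written it has a genuine gap precisely at the step whose resolution is the content of Bahadoran--Mountford's work, namely the upgrade from macroscopic (weakly averaged) density information to convergence of a single-site marginal at a fixed macroscopic time --- the ``strong local equilibrium'' step. Your coupling argument asserts that ``the initial signed discrepancy density vanishes in the macroscopic weak sense'' and that attractiveness plus control of the second class particle current then kills the discrepancy near $x_0N$ at time $t_0N$. But only the \emph{signed} discrepancy vanishes weakly; the density of uncoupled sites between $\zeta^{(N)}$ and a $\nu_\rho$-distributed configuration is in general of order one (take $\zeta^{(N)}$ deterministic and alternating, $\rho=1/2$: about half of all sites are discrepancies). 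What one must show is that positive and negative discrepancies annihilate in macroscopic proportion by time $t_0N$ in a neighborhood of $x_0N$, and this macroscopic annihilation of opposite-sign second class particles is exactly the hard core of \cite{bahadoran2006convergence}; it is not supplied by attractiveness or by a current bound, and your sketch gives no argument for it.

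The auxiliary tools you invoke do not close this gap either. One-block/two-block estimates yield local equilibrium only in a space--time averaged sense and are not how hydrodynamics of the asymmetric exclusion process is established (that goes through attractiveness and coupling in Rezakhanlou's framework, not the GPV entropy machinery); they cannot by themselves produce the single-site marginal at the deterministic time $t_0N$. Likewise, the claim that every subsequential weak limit of the law of $(\zeta_{\lceil x_0N\rceil+y}(t_0N))_{y\in\Z}$ is a translation-invariant stationary measure is not automatic: the law at time $t_0N$ is neither stationary nor shift-invariant, and proving that local subsequential limits inherit invariance (and then identifying the density parameter as $\rho$) requires substantial additional arguments. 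So while your high-level plan (hydrodynamics with constant profile, coupling to $\nu_\rho$, classification of translation-invariant stationary measures) mirrors the known strategy, the decisive local-equilibrium step is asserted rather than proved, and the statement should be treated, as the paper does, as a quoted theorem rather than something recoverable by these standard reductions.
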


A similar local convergence result holds for the open ASEP.
\begin{lemma}\label{lem:LocalConvergecenOpenASEP}  Let $(a_N)_{N=1,2,\dots}$ be a sequence of integers such that $a_N\in\llbracket N\rrbracket$ and $a_N/N\rightarrow\theta$ for some $\theta\in(0,1)$. Fix some $\rho\in (0,1)$ and $\varepsilon \in (0,\min(\theta,1-\theta))$. Assume that for each $N\in\N$, there is an initial configuration $\eta^{(N)}$ for the open ASEP on $\lBr N \rBr$.  Assume that as $N\rightarrow\infty$,
\begin{equation}\label{eq:LocalRhoopenASEP}
\frac{1}{N} \sum_{x \in [-\varepsilon N,\varepsilon N]} \psi(x/N) \eta^{(N)}_{a_N + x} \Longrightarrow 2 \varepsilon \rho \int  \psi(x) \textup{d} x
\end{equation} for all almost everywhere continuous, bounded test functions $\psi \colon \mathbb{R} \rightarrow \mathbb{R}$ with $\textup{supp}(\psi) \subseteq [-\varepsilon,\varepsilon]$. Then we have
\begin{equation}\label{eq:ConvergenceopenASEP}
\lim_{N \rightarrow \infty} \mathbb{P}\left( \eta_{a_N}(\varepsilon N/4) = 1 \, \big| \, \eta(0)=\eta^{(N)}\right) = \rho . 
\end{equation}
\end{lemma}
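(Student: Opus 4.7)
The plan is to couple the open ASEP with a translation-invariant ASEP on $\Z$ whose global density equals $\rho$, and then invoke Theorem \ref{thm:LocalConvergence}. The finite speed of propagation established in Lemma \ref{lem:CouplingOpenZ} guarantees that the configuration at position $a_N$ at time $\varepsilon N/4$ depends only on the initial data inside a window of size $O(\varepsilon N)$ around $a_N$, which is precisely where the localized assumption \eqref{eq:LocalRhoopenASEP} supplies information.

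First I would let $\tilde{\zeta}=(\tilde{\zeta}_x)_{x\in\Z}$ be an i.i.d.\ Bernoulli-$\rho$ sequence, independent of everything else, and set
\[
\zeta^{(N)}_x := \begin{cases} \eta^{(N)}_x & \text{if } x \in [a_N - \varepsilon N, a_N + \varepsilon N], \\ \tilde{\zeta}_x & \text{otherwise,} \end{cases}
\]
which is well-defined once $N$ is large enough, since $\varepsilon < \min(\theta,1-\theta)$ ensures $[a_N - \varepsilon N, a_N+\varepsilon N] \subseteq \lBr N\rBr$. Let $(\zeta(t))_{t\geq 0}$ be the ASEP on $\Z$ started from $\zeta^{(N)}$, coupled to $(\eta(t))_{t\geq 0}$ as in Lemma \ref{lem:CouplingOpenZ} with $a=a_N-\varepsilon N$, $b=a_N+\varepsilon N$, and $t=\varepsilon N/4$. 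That coupling yields $\eta_{a_N}(\varepsilon N/4) = \zeta_{a_N}(\varepsilon N/4)$ with probability at least $1-C_0\exp(-c_0\varepsilon N/4)$, so it suffices to prove $\P\bigl(\zeta_{a_N}(\varepsilon N/4) = 1\bigr) \to \rho$.

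To apply Theorem \ref{thm:LocalConvergence} at the position $a_N$ rather than at $\lceil\theta N\rceil$, I would absorb the offset $y_N := a_N - \lceil \theta N \rceil = o(N)$ using translation invariance of the integer ASEP: setting $\bar{\zeta}^{(N)}_x := \zeta^{(N)}_{x+y_N}$, the process $\bar\zeta_{\lceil \theta N\rceil}(t)$ has the same law as $\zeta_{a_N}(t)$. It then remains to verify that $\bar{\zeta}^{(N)}$ satisfies the global density assumption \eqref{eq:LocalRhoZ}. For any a.e.\ continuous, bounded test function $\psi$ of bounded support,
\[
\frac{1}{N}\sum_{x\in\Z}\psi(x/N)\bar{\zeta}^{(N)}_x = \frac{1}{N}\sum_{y\in\Z}\psi(y/N - y_N/N)\zeta^{(N)}_y,
\]
and since $y_N/N\to 0$, approximating $\psi$ in $L^1$ by continuous functions shows that the replacement of $\psi(y/N-y_N/N)$ by $\psi(y/N)$ incurs only $o(1)$ error. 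Splitting the resulting sum at the window $[a_N-\varepsilon N,a_N+\varepsilon N]$: outside the window, $\zeta^{(N)}_y=\tilde{\zeta}_y$ is i.i.d.\ Bernoulli-$\rho$, so the law of large numbers gives convergence in probability to $\rho\int_{\RR\setminus[\theta-\varepsilon,\theta+\varepsilon]}\psi(y)\,\textup{d}y$; inside, the change of variable $y = a_N+x$ combined with $a_N/N\to\theta$ rewrites the contribution as $\frac{1}{N}\sum_{x\in[-\varepsilon N,\varepsilon N]}\phi(x/N)\eta^{(N)}_{a_N+x}$ for the test function $\phi(u):=\psi(\theta+u)\mathds{1}_{[-\varepsilon,\varepsilon]}(u)$ (up to another $o(1)$ absorbed by continuous approximation), which by \eqref{eq:LocalRhoopenASEP} converges to $\rho\int_{\theta-\varepsilon}^{\theta+\varepsilon}\psi(y)\,\textup{d}y$. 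Summing the two contributions gives $\rho\int\psi\,\textup{d}y$; Theorem \ref{thm:LocalConvergence} applied at $x_0=\theta$ and $t_0=\varepsilon/4$ then yields \eqref{eq:ConvergenceopenASEP}.

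The main technical point will be the passage from the localized density condition \eqref{eq:LocalRhoopenASEP} to the global condition \eqref{eq:LocalRhoZ} required by Theorem \ref{thm:LocalConvergence}. The auxiliary Bernoulli-$\rho$ completion outside the window is what supplies the matching global density, while the finite speed of propagation of Lemma \ref{lem:CouplingOpenZ} ensures that this artificial choice outside the window does not affect the probability of interest at $a_N$ within time $\varepsilon N/4$.
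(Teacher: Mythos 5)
Your proposal is correct and follows essentially the same route as the paper's proof: complete the initial data outside an $O(\varepsilon N)$ window around $a_N$ by i.i.d.\ Bernoulli-$\rho$ variables, check the hydrodynamic hypothesis \eqref{eq:LocalRhoZ} for the resulting configuration on $\Z$ (Bernoulli law of large numbers outside the window, assumption \eqref{eq:LocalRhoopenASEP} inside), apply Theorem \ref{thm:LocalConvergence}, and transfer the conclusion back to the open ASEP via the finite-speed coupling of Lemma \ref{lem:CouplingOpenZ}. Your extra translation by $y_N=a_N-\lceil\theta N\rceil=o(N)$ just makes explicit the site offset that the paper treats implicitly, and does not change the argument.
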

\begin{proof} 
For each $N\in\N$, we first restrict the initial configuration $\eta^{(N)}$ to  $[(\theta-\varepsilon) N,(\theta+\varepsilon) N]$, then extend it to a (random) configuration ${\zeta}^{(N)} \in \{ 0,1 \}^{\Z}$ by setting
%Note that we can extend every initial configuration $\eta^{(N)}$ restricted to  $[(\theta-\varepsilon) N,(\theta+\varepsilon) N]$ to a (random) configuration ${\zeta}^{(N)} \in \{ 0,1 \}^{\Z}$ by setting
\begin{equation}
{\zeta}^{(N)}_x := \begin{cases}
\eta^{(N)}_x & \text{ if }  x \in [(\theta-\varepsilon) N,(\theta+\varepsilon) N]  \\
Y_x   & \text{ otherwise,}
\end{cases}
\end{equation} where $(Y_x)_{x \in \Z}$ is a family of i.i.d.\ Bernoulli-$\rho$-random variables. Using \eqref{eq:LocalRhoopenASEP}, we see that ${\zeta}^{(N)}$ satisfies the assumption \eqref{eq:LocalRhoZ} in Theorem \ref{thm:LocalConvergence}. Thus, in order to conclude \eqref{eq:ConvergenceopenASEP}, it suffices to show that we can couple the ASEP on the integers $(\zeta(t))_{t \geq 0}$ and the open ASEP $(\eta(t))_{t \geq 0}$ such that
\begin{equation}\label{eq:GoodCoupling}
\lim_{N \rightarrow \infty}\mathbf{P}\left( {\eta}_{a_N}(\varepsilon N/4) ={\zeta}_{a_N}(\varepsilon N/4) \, \big| \, \eta(0)=\eta^{(N)} \text{ and } \zeta(0)=\zeta^{(N)}  \right) =1 . 
\end{equation}
%under the basic coupling $\mathbf{P}$ from Lemma \ref{lem:CouplingOpenZ}. 
Using Lemma \ref{lem:CouplingOpenZ} for $t=\varepsilon N/4$, as well as $a=(\theta-\varepsilon) N$ and $b=(\theta+\varepsilon) N$, we conclude.
% Since  $\eta(0)$ and $\zeta(0)$ initially agree on the interval $[(\theta-\varepsilon) N,(\theta+\varepsilon) N]$, and all edges update independently according to rate $1$ Poisson clocks, we get that
% \begin{equation*}
% \lim_{N \rightarrow \infty}\tilde{\mathbb{P}}\big( {\eta}_{x}(t) = \zeta_{x}(t) \forall t\in [0,N\varepsilon/2] \text{ and } x \in [(\theta-\varepsilon/2) N,(\theta+\varepsilon/2) N] \big) =1 . 
% \end{equation*} In particular, we see that \eqref{eq:GoodCoupling} holds with respect to the basic coupling, which finishes the proof.
\end{proof}

We have now all tools to show Proposition~\ref{prop:coexistence}.

\begin{proof}[Proof of Proposition~\ref{prop:coexistence}]
We first assume $\theta\in(0,1)$. At the end we prove the result for $\theta\in\{0,1\}$.
By Skorokhod's representation theorem, the sequence of random variables $\eta^{(N)}\sim\mu_N$ taking values in $\{0,1\}^N$ can be defined on the same probability space, such that \eqref{eq:macroscopic density profile} is almost sure convergence. 
%Let $U$ be the uniform distribution on $[0,1]$ that appears on the right hand side of \eqref{eq:macroscopic density profile}. 
Let $\varepsilon>0$, we define the events  
%Let $\varepsilon>0$ and $U$ be chosen according to the uniform distribution on $[0,1]$. We define the events  
\begin{equation}
 A_{\varepsilon} := \Big\{ U \in [0,\theta-\varepsilon ] \Big\} \quad \text{ and } \quad B_{\varepsilon} := \Big\{ U \in [\theta+\varepsilon ,1] \Big\} . 
\end{equation} %Consider an initial configuration $\eta^{(N)} \sim \mu_{N}$. 
By Theorem \ref{thm:HeightFunctionCoexistence}, on the event $A_{\varepsilon}$ we have
%Then by Theorem \ref{thm:HeightFunctionCoexistence}, on the event $A_{\varepsilon}$ (using the same random variable $U$ for all $N\in \mathbb{N}$), we have that
\begin{equation}
 \frac{1}{N} \sum_{x \in [-\varepsilon N/2,\varepsilon N/2] } \psi(x/N) \eta^{(N)}_{a_N + x}  \overset{\mbox{a.s.}}{\longrightarrow}  \frac{\varepsilon A}{1+A} \int \psi(x) \textup{d} x
\end{equation} whereas on the event $B_{\varepsilon}$,  we have
\begin{equation}
 \frac{1}{N} \sum_{x \in [-\varepsilon N/2,\varepsilon N /2] } \psi(x/N) \eta^{(N)}_{a_N + x}  \overset{\mbox{a.s.}}{\longrightarrow} \frac{ \varepsilon }{1+A} \int  \psi(x) \textup{d} x 
\end{equation}  for almost everywhere continuous, bounded test functions $\psi \colon \mathbb{R} \rightarrow \mathbb{R}$ with $\textup{supp}(\psi) \subseteq [-\varepsilon,\varepsilon]$.
This can be seen by approximating $\psi$ both from above and from below by piecewise constant functions.

We then consider the sequence of open ASEPs on $\llbracket N\rrbracket$ starting from initial configurations $\eta^{(N)}$.
From Lemma \ref{lem:LocalConvergecenOpenASEP}, we see that 
\begin{align*}
\lim_{N \rightarrow \infty} \mathbb{P}\left( \eta_{a_N}(\varepsilon N/4)=1 \, \big| \, A_{\varepsilon} , \eta (0) = \eta^{(N)} \right) &= \frac{A}{1+A},   \\ 
\lim_{N \rightarrow \infty} \mathbb{P}\left( \eta_{a_N}(\varepsilon N/4)=1 \, \big| \, B_{\varepsilon}, \eta (0) = \eta^{(N)} \right) &= \frac{1}{1+A}  . 
\end{align*}
In view of $\eta(\varepsilon N/4)\sim \mu_N$ and $\eta (0)= \eta^{(N)} \sim \mu_N$, we have
\[
\lim_{N\rightarrow\infty}\mathbb{P}\left(\eta^{(N)}_{a_N}=1, A_{\varepsilon}\cup B_{\varepsilon}\right)=
\mathbb{P}(A_{\varepsilon})\frac{A}{1+A}+\P(B_{\varepsilon})\frac{1}{1+A}.
\]
Since $\mathbb{P}(A_{\varepsilon})= \theta-\varepsilon$  and $\P(B_{\varepsilon})= 1 - \theta-\varepsilon$, taking $\varepsilon\rightarrow0$ gives the desired result for $\theta \in (0,1)$. 

%for an arbitrarily small $\varepsilon>0$, this gives the desired result. 

Finally we show the result for $\theta\in\{0,1\}$. We only consider the case $\theta=0$, as $\theta=1$ can be proved similarly.
Assume that $1\leq a_N\leq N$ and $a_N/N \rightarrow 0$ as $N \rightarrow \infty$. We want to show that
\be\label{eq:ogerv}
\lim_{N\rightarrow\infty}\mu_N(\tau_{a_N}=1)=\frac{1}{1+A}.
\ee

In equation \eqref{eq:simple relation} in Proposition \ref{prop:simple relation}, notice that the left-hand side is strictly positive since the measure $\mt_{N,1}$ is irreducible. Therefore the set
\[
\{\mu_N(\tau_k=1)-\mu_N(\tau_l=1):1\leq k<l\leq N\}
\]
is contained either in $\mathbb{R}_+$ or in $\mathbb{R}_-$. By our result for $\theta\in(0,1)$, it is contained in $\mathbb{R}_-$. 
By Proposition~\ref{prop:boundary density}, on the coexistence line $A=C>1$ we have $\ld=1/(1+A)$. Hence
\be\label{eq:fow}
\liminf_{N\rightarrow\infty}\mu_N(\tau_{a_N}=1)\geq\lim_{N\rightarrow\infty}\mu_N(\tau_{1}=1)=\ld=\frac{1}{1+A}.
\ee
For the reversed inequality, for any $\ep\in(0,1)$ we consider another sequence $(a_N')$ satisfying $a_N\leq a_N'\leq N$ and $a_N'/N\rightarrow\ep$.  By the above observation and the result for $\ep$,
\[
\limsup_{N\rightarrow\infty}\mu_N(\tau_{a_N}=1)\leq\lim_{N\rightarrow\infty}\mu_N(\tau_{a'_N}=1)= (1-\ep)\frac{1}{1+A}+\ep\frac{A}{1+A}.
\]
Taking $\ep\rightarrow0$ we have
\be\label{eq:rev}
\limsup_{N\rightarrow\infty}\mu_N(\tau_{a_N}=1)\leq\frac{1}{1+A}.
\ee
Combining \eqref{eq:fow} and \eqref{eq:rev} we have  \eqref{eq:ogerv}. The proof is concluded.
\end{proof}

We record the following consequence of Proposition \ref{prop:coexistence}, which is of independent interest.

\begin{corollary}
Assume \eqref{eq:conditions qABCD} and $ABCD\notin\{q^{-l}:l\in\NN_0\}$. Consider the coexistence line $A=C>1$. Assume that  $(a_N)_{N=1,2,\dots}$ is a sequence of integers such that 
\begin{equation}\label{eq:LiggettCoex}
    \min(a_N,N-a_N) \rightarrow \infty,
\end{equation} while $a_N/N \rightarrow \theta$ for some $\theta \in [0,1]$. We fix $M\in \N$ and let $\mu_N\llbracket a_N,a_N+M\rrbracket$ denote the measure $\mu_N$ projected onto the interval $\lBr a_N,a_N+M \rBr$.  Then as $N\rightarrow\infty$, we have
    \begin{equation}
    \mu_N\llbracket a_N,a_N+M\rrbracket \Longrightarrow (1-\theta)\textup{Bern}_M\left( \frac{1}{1+A}\right) +\theta \textup{Bern}_M\left(\frac{A}{1+A}\right) , 
    \end{equation} where $\textup{Bern}_M(\rho)$ denotes the Bernoulli-$\rho$-product measure on $\{0,1\}^{M}$ for $\rho \in [0,1]$.  
\end{corollary}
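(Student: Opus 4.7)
The plan is to upgrade Proposition \ref{prop:coexistence} from a single-site density statement to the joint distribution on a finite window. The mixture on the right-hand side should arise, just as in Theorem \ref{thm:HeightFunctionCoexistence} and Remark \ref{rmk:shock}, from the random shock location $U\sim\mathrm{Unif}[0,1]$: conditional on $U>\theta$ the window $\lBr a_N,a_N+M\rBr$ sits in the low-density region of density $1/(1+A)$, while conditional on $U<\theta$ it sits in the high-density region of density $A/(1+A)$.

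I would first treat $\theta\in(0,1)$. By Skorokhod's representation theorem, realize $(\eta^{(N)},U)$ on a common probability space so that the convergence in Theorem \ref{thm:HeightFunctionCoexistence} is almost sure. For each $\varepsilon\in(0,\min(\theta,1-\theta))$, partition according to $A_\varepsilon=\{U\leq\theta-\varepsilon\}$ and $B_\varepsilon=\{U\geq\theta+\varepsilon\}$, whose probabilities equal $\theta-\varepsilon$ and $1-\theta-\varepsilon$. Exactly as in the proof of Proposition \ref{prop:coexistence}, on $A_\varepsilon$ the local empirical density of $\eta^{(N)}$ in any $\varepsilon N$-window around $a_N$ converges almost surely to $A/(1+A)$, and on $B_\varepsilon$ it converges to $1/(1+A)$.

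The heart of the argument is a multi-site strengthening of Lemma \ref{lem:LocalConvergecenOpenASEP}: whenever a sequence of initial configurations has local empirical density around $a_N$ converging to some $\rho\in(0,1)$, the projection of $\eta(\varepsilon N/4)$ onto $\lBr a_N,a_N+M\rBr$ converges weakly to $\mathrm{Bern}_M(\rho)$. This should be obtained by re-running the coupling of Lemma \ref{lem:CouplingOpenZ} between open ASEP on $\lBr N\rBr$ and ASEP on $\Z$, together with a multi-site analogue of Theorem \ref{thm:LocalConvergence}: given a sequence of initial configurations on $\Z$ whose local empirical density at the origin tends to $\rho$, the ASEP-evolved configuration at time $t_0 N$ projected onto any fixed finite window converges to $\mathrm{Bern}(\rho)$-product. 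This multi-site upgrade should follow either by tracking cylinder test functions through the $H$-theorem arguments in \cite{bahadoran2006convergence}, or by combining the single-site statement there with Liggett's classification that the extremal translation-invariant stationary measures of ASEP on $\Z$ with density $\rho$ are exactly the Bernoulli-$\rho$-product measures. Combined with the stationarity $\eta(\varepsilon N/4)\sim\mu_N$ and sending first $N\to\infty$ and then $\varepsilon\downarrow 0$, this yields the claim for interior $\theta$.

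The boundary cases $\theta\in\{0,1\}$ actually become simpler, because Proposition \ref{prop:coexistence} already gives a deterministic single-site limiting density ($1/(1+A)$ when $\theta=0$, and $A/(1+A)$ when $\theta=1$). The monotonicity argument used at the end of the proof of Proposition \ref{prop:coexistence} (together with the interior result applied to auxiliary sequences $a_N'$ with $a_N'/N\to\varepsilon>0$) upgrades this to convergence of the local empirical density in any $o(N)$-window around $a_N$ to the same deterministic value; feeding this into the multi-site local convergence just described then identifies the weak limit as a single Bernoulli product, matching the statement with $\theta\in\{0,1\}$. The main obstacle throughout the argument is the multi-site extension of \cite[Theorem 2]{bahadoran2006convergence}, which is not explicit in that paper and is the genuine technical bottleneck; once it is in place, the rest of the proof is essentially a repackaging of the coupling and Skorokhod arguments used for Proposition \ref{prop:coexistence}.
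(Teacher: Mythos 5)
Your route is genuinely different from the paper's, and it is much heavier. The paper's proof is soft and takes two lines: it invokes Liggett's Theorem 3.29 in \cite{liggett1999stochastic} (extended to $q>0$ in Proposition A.2 of \cite{nestoridi2023approximating}), which says that for any sequence satisfying \eqref{eq:LiggettCoex} the projection of $\mu_N$ onto a window of fixed size $M$ converges weakly to a mixture $\int \textup{Bern}_M(\rho)\,\chi(\mathrm{d}\rho)$ of Bernoulli product measures, and then identifies the mixture using the single-site density already established in Proposition \ref{prop:coexistence}. You instead re-run the dynamical argument behind Proposition \ref{prop:coexistence} (Skorokhod realization of Theorem \ref{thm:HeightFunctionCoexistence}, the coupling of Lemma \ref{lem:CouplingOpenZ}, local convergence) at the level of cylinder functions. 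For interior $\theta$ this is plausible: the bottleneck you flag, a multi-site version of Theorem \ref{thm:LocalConvergence}, is essentially the local-equilibrium content of \cite{bahadoran2006convergence} (the paper quotes only its one-dimensional marginal), so that ingredient can most likely be supplied; note however that your fallback suggestion (single-site convergence plus Liggett's classification of translation-invariant stationary measures) does not work as stated, since the law of the configuration at time $\varepsilon N/4$ is neither translation invariant nor stationary for the ASEP on $\Z$, and no space-time averaging argument is given to make such a classification applicable.

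The genuine gap is in your treatment of $\theta\in\{0,1\}$. Under \eqref{eq:LiggettCoex} the sequence $a_N$ may grow arbitrarily slowly (say $a_N\sim\log N$ with $\theta=0$), and then the machinery you propose to feed the density information into does not apply: Lemma \ref{lem:LocalConvergecenOpenASEP} and the coupling of Lemma \ref{lem:CouplingOpenZ} require a window of macroscopic radius around $a_N$ that stays inside $\lBr N\rBr$ and is not reached by boundary effects up to time $\varepsilon N/4$, and both requirements fail when $a_N=o(N)$, because information from the left reservoir reaches site $a_N$ after time of order $a_N$. Moreover, the monotonicity used at the end of the proof of Proposition \ref{prop:coexistence} is a statement about single-site densities (coming from Proposition \ref{prop:simple relation}); it gives no stochastic monotonicity of the $M$-site projections, so it cannot by itself upgrade the single-site limit at $\theta\in\{0,1\}$ to the asserted product-measure limit. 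This is exactly what the paper's Liggett-based argument buys: the structural mixture statement holds under \eqref{eq:LiggettCoex} alone, uniformly down to the boundary cases, and Proposition \ref{prop:coexistence} then pins down the mixture. To complete your approach you would need either this structural input or a separate argument for slowly growing $a_N$; as written, that case is not covered.
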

\begin{proof}
    In Theorem 3.29 of \cite{liggett1999stochastic}, Liggett shows that there exists a probability measure $\chi$ on $[0,1]$ such that for all $a_N$ which satisfy \eqref{eq:LiggettCoex}, we get 
     \begin{equation}
       \mu_N\llbracket a_N,a_N+M\rrbracket \Longrightarrow \int \textup{Bern}_M(\rho) \chi( \textup{d} \rho ) ;
    \end{equation}
 see Proposition A.2 in \cite{nestoridi2023approximating} when $q>0$. The result then follows from Proposition \ref{prop:coexistence}.
\end{proof}

We deduce now Theorem \ref{thm:mainCoexistence} from Proposition \ref{prop:coexistence} and Proposition \ref{prop:simple relation}.
\begin{proof}[Proof of Theorem \ref{thm:mainCoexistence}]
On the coexistence line $A=C>1$, assuming $ABCD\notin\{q^{-l}:l\in\NN_0\}$, we want to show that under the measure $\mt_{N,1}$, the distribution of $\lc_1/N$ weakly converges as $N\rightarrow\infty$ to the uniform distribution on $[0,1]$. We only need to show that, for any $0<\theta_1<\theta_2<1$, 
\begin{equation}
    \label{eq:to prove in proof of main theorem coexistence}
\lim_{N\rightarrow\infty}\mt_{N,1}\lb \theta_1\leq\lc_1/N\leq\theta_2\rb=\theta_2-\theta_1.
\end{equation}
By Proposition \ref{prop:simple relation}, taking $k=\lceil\theta_1 N\rceil$ and $l=\lfloor\theta_2 N\rfloor$, we have
\begin{equation} \label{eq:vaerer}
\mt_{N,1}\lb \theta_1\leq\lc_1/N\leq\theta_2\rb=\frac{\m_{N+1}\lb\tau_{\lceil\theta_1 N\rceil}=1\rb-\m_{N+1}\lb\tau_{\lfloor\theta_2 N\rfloor+1}=1\rb}{\m_{N+1}(\tau_1=1)-\m_{N+1}(\tau_{N+1}=1)}.
\end{equation}
We recall Proposition \ref{prop:coexistence}: for any $\theta\in[0,1]$ and any sequence $(a_N)_{N=1,2,\dots}$ satisfying $a_N/N\rightarrow\theta$, 
\[ 
\lim_{N\rightarrow\infty}\mu_N(\tau_{a_N}=1)= (1-\theta)\frac{1}{1+A}+\theta\frac{A}{1+A}.
\]
We take $\theta \in \{ 0,1,\theta_1,\theta_2 \}$, respectively, in the above equation. In view of \eqref{eq:vaerer}, we obtain \eqref{eq:to prove in proof of main theorem coexistence}. This finishes the proof.
\end{proof}

\bibliographystyle{goodbibtexstyle}
\bibliography{twospecies}

\end{document}